\definecolor{darkred}{rgb}{.8,0,0}
\definecolor{tocolor}{rgb}{.1,.1,.1}
\definecolor{urlcolor}{rgb}{.2,.2,.6}
\definecolor{linkcolor}{rgb}{.1,.1,.5}
\definecolor{citecolor}{rgb}{.4,.2,.1}
\definecolor{gray}{rgb}{.8,.8,.8}
\newcommand{\mapdef}[5]{\begin{array}{ccccc}
#1 &:& #2 &\rightarrow& #3 \\
&& #4 &\mapsto& #5
\end{array}}
\newcommand{\thdef}[2]{
	\newaliascnt{#1}{theorem}  
	\newtheorem{#1}[#1]{#2}
	\aliascntresetthe{#1}  
	\newtheorem*{#1*}{#2}
	\expandafter\newcommand\expandafter{\csname #1autorefname\endcsname}{#2}
}
\newcommand{\edge}{\,
	\begin{tikzpicture}
		\draw (0,.5ex) -- (2ex,.5ex);
		\draw[fill] (0,.5ex) circle (.2ex);
		\draw[fill] (2ex,.5ex) circle (.2ex);
		\draw[fill] (0,0) circle (0);
	\end{tikzpicture}\,
}
\title{Creating  quantum projective spaces by deforming $q$-symmetric algebras}
\author{Mykola Matviichuk}
\address{Max Planck Institute for Mathematics, Bonn, Germany}
\email{mykola.matviichuk@gmail.com}
\author{Brent Pym}
\address{McGill University, Montreal, Canada}
\email{brent.pym@mcgill.ca}
\author{Travis Schedler}
\address{Imperial College London, UK}
\email{t.schedler@imperial.ac.uk}
\date{}
\newtheorem{theorem}{Theorem}[section]
\theoremstyle{definition}
\newcommand{\HH}{\mathsf{HH}}
\newcommand{\HP}{\mathsf{HP}}
\newcommand{\ii}{\sqrt{-1}}
\newcommand{\n}{n}
\newcommand{\smoothedge}[5][blue]{
\begin{scope}
\def\Ea{#2}
\def\Eb{#3}
\def\Ra{#4}
\def\Rb{#5}
\draw[very thick,#1] (\Ea) -- (\Eb);
\foreach \x in {\Ra,\Rb} {
		\begin{pgfonlayer}{background layer}
		\begin{scope}
			\clip (\Ea) -- (\x) -- (\Eb);
			
			\draw [draw,opacity=0.5,fill,#1] (\x) circle (0.5) ;
		\end{scope}
		\end{pgfonlayer}
}
\end{scope}
}
\newcommand{\ps}{\lambda}
\newcommand{\ac}{\scriptstyle \textrm{!`}}
\newcommand{\bC}{\mathbb{C}}
\newcommand{\bCx}{\mathbb{C}^\times}
\newcommand{\bZ}{\mathbb{Z}}
\newcommand{\Mat}{\operatorname{M}}
\newcommand{\Ext}{\operatorname{Ext}}
\newcommand{\EE}{\operatorname{EExp}}
\newcommand{\bk}{k}
\newcommand{\bkx}{\bk^\times}
\newcommand{\sfV}{\mathsf{V}}
\newcommand{\sfA}{\mathsf{A}}
\newcommand{\sfB}{\mathsf{B}}
\newcommand{\bP}{\mathbb{P}}
\newcommand{\askew}[2][n]{\mathrm{AM}_{#1}({#2})}
\newcommand{\mskew}[2][n]{\mathrm{AM}_{#1}({#2}^\times)}
\newcommand{\rbrac}[1]{\left(#1\right)}
\newcommand{\abrac}[1]{\left\langle#1\right\rangle}
\newcommand{\set}[2]{\left\{#1\,\middle|\,#2\right\}}
\newcommand{\FO}[3][]{\mathrm{FO}(#2,#3)_{#1}}
\newcommand{\Ept}{z}
\newenvironment{example}
  {\pushQED{\qed}\examplex}
  {\popQED\endexamplex}
\newenvironment{remark}
  {\pushQED{\qed}\remarkx}
  {\popQED\endremarkx}
\numberwithin{equation}{section}
\begin{document}

\maketitle

\begin{abstract}
   We construct a large collection of ``quantum projective spaces'', in the form of Koszul, Calabi--Yau  algebras with the Hilbert series of a polynomial ring.  We do so by starting with the toric ones (the $q$-symmetric algebras), and then deforming their relations using a diagrammatic calculus, proving unobstructedness of such deformations under suitable nondegeneracy conditions.  We then prove that these algebras are identified with the canonical quantizations of corresponding families of quadratic Poisson structures, in the sense of Kontsevich.  In this way, we obtain the first broad class of quadratic Poisson structures for which his quantization can be computed explicitly, and shown to converge, as he conjectured in 2001.  
\end{abstract}

\setcounter{tocdepth}{1}

\section{Introduction}

\subsection{Non-commutative deformations of polynomial algebras}

It is a longstanding central question in algebra and mathematical physics to understand \emph{quantizations} of a given variety $X$, i.e.~non-commutative, but associative, deformations of the (homogeneous) coordinate ring, or of related objects such as various categories of sheaves, depending on some parameter $\hbar$.  When $X$ is smooth affine, formal quantizations (given by formal power series in $\hbar$) are equivalent to Poisson brackets by the groundbreaking work of Kontsevich~\cite{Kontsevich2003};  while his proof provides an ``explicit'' quantization in the form of a Feynman expansion, it is difficult to directly assess its convergence, or extract concrete formulae for the deformed algebras from knowledge of the corresponding Poisson bracket. 

When $X= \bP^{n-1}$ is a projective space, all quantizations are given by homogeneous quantizations of the homogeneous coordinate ring  $\bC[X]=\bC[x_0,\ldots,x_{n-1}]$. Even in this seemingly simple case, the problem of constructing and classifying these quantizations remains open and is poorly understood for $n \geq 5$. However, there is a general understanding of the sort of rings one should look for: one should replace the usual relation that the variables commute, $x_ix_j=x_jx_i$, with more general quadratic relations $x_ix_j=x_jx_i+\sum_{kl}c_{ij}^{kl}(\hbar)x_kx_l$ where $\hbar$ is a parameter, taking care to ensure that the algebra remains of the same size (i.e.~the deformation is flat). The result should enjoy many of the same properties as the polynomial ring, reflecting the geometric properties of projective space.  In particular, it should be a Koszul algebra satisfying the celebrated Artin--Schelter regularity condition, or what is basically equivalent, Ginzburg's (twisted) Calabi--Yau condition; these are homological conditions on a ring that correspond geometrically to the fact that the affine space $\mathbb{A}^{n}$ is smooth and has trivial canonical bundle.  Moreover, Kontsevich conjectures in \cite{Kontsevich2001} that in this case, his canonical quantization recipe should result in algebras for which the formal power series $c_{ij}^{kl}(\hbar)$ determining the relations of his formal quantizations are convergent, i.e.~define analytic functions of $\hbar$ in some disk around $\hbar =0$.

For $n \leq 3$, there is a complete classification of Artin--Schelter regular algebras in terms of geometric data~\cite{Artin1990,Artin1987} (including algebras that do not arise from quantization).  For $n=4$, the quantizations were classified into six irreducible families \cite{Pym2015} using the corresponding classification of Poisson brackets~\cite{Cerveau1996,Loray2013}.  In higher dimension, various examples are known, most notably the elliptic algebras of Feigin--Odesskii~\cite{Feigin1998,Odesskii1989} whose Hilbert series and regularity properties have been established (for most values of the parameters) in \cite{Chirvasitu2021,Tate1996}.  However, even for $n=5$, the classification is currently out of reach.

Our goal in this paper is to produce a large class of \emph{explicit, analytic} quantizations of $\bP^{n-1}$ for $n \ge 3$ odd, by quantizing Poisson structures satisfying a suitable nondegeneracy condition.  These quantizations are Artin--Schelter regular algebras with the correct Hilbert series, given by explicit quadratic relations defined combinatorially via certain decorated graphs---a special case of the ``smoothing diagrams'' for log symplectic manifolds which we introduced in \cite{Matviichuk2020} and which were further developed by the first author in \cite{Matviichuk2023b}.  By construction, these quantizations form an open subset of the space of possible quadratic relations with correct Hilbert series, up to the action of $\mathsf{GL}(n)$, and this gives an effective description of many new irreducible components of this space---more precisely, a classification of the algebras that admit a suitably generic toric degeneration. The families of non-commutative $\bP^{n-1}$ we can construct are indexed by the same diagrams as the families of log symplectic structures on $\bP^{n-1}$ admitting toric degenerations studied in \cite{Matviichuk2020}, in particular, there are the same number of each.  Already for $n=5$, this gives a lower bound on the number of irreducible components of the moduli space of quantum $\mathbb{P}^4$: there are at least 40.  Moreover, leveraging the calculation of Kontsevich's quantization for toric Poisson structure via Hodge theory in \cite{Kontsevich2008,Lindberg2021,Lindberg2024}, we prove that when the smoothing diagram has no cycles, our algebras are exactly the canonical quantizations in the sense of Kontsevich, and as a consequence we verify his conjecture on convergence (up to isomorphism) of the canonical quantization of quadratic Poisson structures in these cases.

\subsection{$q$-symmetric algebras}
Our quantizations are constructed in two steps.  The first is well known: we make an ``easy'' deformation of the polynomial ring, which retains its toric symmetry.  More precisely, given a matrix $q=(q_{ij})_{i,j=0}^{n-1}$ of non-zero complex numbers such that $q_{ii}=1$ for all $i$, and $q_{ji}=q_{ij}^{-1}$ for $i < j$, we consider the non-commutative \emph{$q$-symmetric algebra} (a.k.a. ``$q$-polynomial algebra'', ``skew polynomial ring'', ``quantum polynomial ring'', ``quantum affine space'', ...) $\mathsf{A}_q$ generated by $x_0, \ldots, x_{n-1}$
modulo the relations
\begin{align*}
x_i x_j  &= q_{ij} x_j x_i  & 0\le i,j \le n-1.
\end{align*}
It has a natural action of the torus $(\bCx)^n$ by rescaling the generators.

For any choice of $q$, the algebra $\mathsf{A}_q$ is Koszul, twisted Calabi--Yau and Artin--Schelter regular. Moreover, it is (untwisted) Calabi--Yau if and only if $q$ is normalized, in the sense that $\prod_{j=0}^{n-1} q_{ij} = 1$ for all $i$.  These algebras can be viewed as quantizations of the toric Poisson algebra $\sfB_\lambda$, given by the polynomial ring $\bC[x_0,\ldots,x_{n-1}]$ with the Poisson bracket
\begin{align*}
\{x_i,x_j\} &= \ps_{ij} x_i x_j & 0\le i,j \le n-1,
\end{align*} 
via the correspondence $q_{ij} := e^{\hbar \ps_{ij}}$.  As proven in \cite{Kontsevich2008,Lindberg2021,Lindberg2024}, this is exactly the canonical quantization in the sense of Kontsevich for these particular Poisson brackets.

\subsection{Main result}
The second step is the main content of the paper: we take a $q$-symmetric algebra $\sfA_q$, with normalized $q$, and determine its possible deformations as a quadratic algebra, under an explicit Zariski-open genericity condition on $q$, namely that $q_{ij} = e^{\lambda_{ij}}$ for a skew-symmetric corank-one matrix $(\lambda_{ij})$ lying in the complement of an explicit collection of hyperplanes (see \autoref{def:mgeneric}).  Note that this requires $n$ to be odd, so that the corresponding projective space $\bP^{n-1}$ is even-dimensional.  The Poisson bracket of $\sfB_\lambda$ then induces a toric log symplectic structure on $\bP^{n-1}$.

The genericity condition translates into the statement that the $\bCx$-invariant Hochschild cohomology of $\sfA_q$ coincides with the $\bCx$-invariant Poisson cohomology of $\sfB_\lambda$, so that we may express the deformations of $\sfA_q$ in terms of the ``smoothing diagrams'' mentioned above.   More precisely, the smoothing diagram of a given $q$-symmetric algebra $\sfA_q$ is a complete graph on $n$ vertices, with certain edges and angles colored. The colored edges are in one-to-one correspondence with the non-toric infinitesimal deformations of $\mathsf{A}_q$, i.e.~the isotypical components of the deformation space with nonzero torus weight.  The colored angles then encode the corresponding torus weights; see \autoref{subsec:smoothingDiagrams} for further details.  

The colored edges in the smoothing diagram turn out to decompose as a union of disjoint cycles and chains.  By controlling the torus weights of the possible obstructions to deformations, we show that the chains extend to filtered deformations of $\sfA_q$ that are polynomial in the deformation parameter, and unique up to isomorphism.  Meanwhile, using the first author's classification of smoothing diagrams with cycles~\cite{Matviichuk2023b}, and some asymptotics of theta functions, we show that cycles extend to formal deformations given by the elliptic algebras of Feigin--Odesskii~\cite{Feigin1998,Odesskii1989}.  Finally, we show that the interaction between such deformations are governed by a braiding of the tensor product determined by $q$.  In this way, we arrive at the following result.
\begin{theoremAlph}[see \autoref{thm:DeformCyclesAndChains}] \label{thm:Main}
\textit{
If $q$ is generic in the above sense, then the non-toric infinitesimal deformations of $\mathsf{A}_q$ are jointly unobstructed. More precisely, every such infinitesimal deformation extends to an analytic family of algebras, given by a braided tensor product of the filtered deformation obtained from the chains in the smoothing diagram, and a Feigin--Odesskii elliptic algebra associated to each cycle.}
\end{theoremAlph}

\subsection{An example}
Let $n=5$ and consider the $n\times n$ matrix $q_{ij}=e^{\hbar \ps_{ij}}$, where $\hbar \in \bC$ and

$$
(\ps_{ij})_{i,j=0}^4 =
\begin{pmatrix}
0 & -6 & 6 & -2 & 2 \\
6 & 0 & -3 & -1 & -2 \\
-6 & 3 & 0 & -3 & 6 \\
2 & 1& 3 & 0 & -6 \\
-2 & 2 & -6 & 6 & 0
\end{pmatrix}.
$$

The smoothing diagram of $\mathsf{A}_q$ can be calculated according to the recipe described in \autoref{subsec:smoothingDiagrams}, and looks as follows:
$$
\includegraphics[scale=0.6]{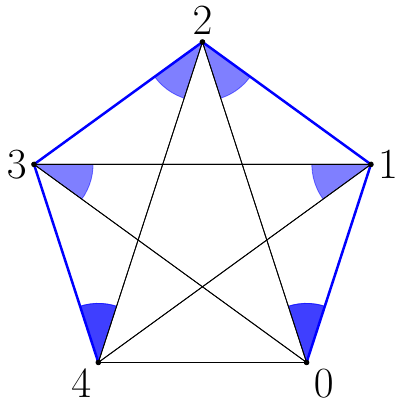}
\iffalse
\pgfdeclarelayer{background layer}
\pgfdeclarelayer{foreground layer}
\pgfsetlayers{background layer,main,foreground layer}
\begin{tikzpicture}[baseline=-1ex,
rotate=360/2/5,
scale=0.9,line join = round 
    ] 
    
    \begin{pgfonlayer}{main}
\foreach \n in {1,...,5}
{
    \coordinate (v\n) at ({-90+((\n-1)*360/5)}:1.5);
}
\node[below right] at (v1) {$0$};
\node[right] at (v2) {$1$};
\node[above] at (v3) {$2$};
\node[left] at (v4) {$3$};
\node[below left] at (v5) {$4$};

\foreach \n in {1,...,5}
{
	\foreach \m in {1,...,5}
	\draw (v\n) -- (v\m);
}
\end{pgfonlayer}

\smoothedge{v1}{v2}{v3}{v4}
\smoothedge{v2}{v3}{v1}{v1}
\smoothedge{v3}{v4}{v5}{v5}
\smoothedge{v4}{v5}{v3}{v2}

\begin{pgfonlayer}{foreground layer}
\foreach \n in {1,...,5}
{
	\draw[fill] (v\n) circle (0.03);
}
\end{pgfonlayer}
 \end{tikzpicture}
\fi
$$

The four colored edges represent a four-dimensional space of non-toric infinitesimal deformations, given by deforming four of the quadratic relations  $x_ix_j = q_{ij}x_jx_i$ as follows:
$$
\begin{matrix}
x_1x_0 &=& q_{10}~x_0x_1 &+&\varepsilon\gamma_1~x_2x_3,\\
x_2x_1 &=& q_{21}~x_1x_2&+&\varepsilon\gamma_2~x_0^2, \\
x_3x_2 &=& q_{32}~x_2x_3 & +&\varepsilon\gamma_3~ x_4^2,\\
x_4x_3 &=& q_{43}~x_3x_4&+&\varepsilon\gamma_4~x_1x_2,
\end{matrix}
$$
where $\gamma_i\in \mathbb{C}$ are arbitrary fixed constants and $\varepsilon$ is the deformation parameter. \autoref{thm:Main}  implies that for generic $\hbar\in\mathbb{C}$, there is a way to add terms of higher order in $\varepsilon$ to the relations, so that the resulting quadratic algebra is a formal flat deformation of $\mathsf{A}_q$; these are worked out explicitly in \autoref{ex:4chain}.

\subsection{Homological properties}
Having constructed filtered deformation of $\mathsf{A}_q$, we study its Koszul resolution via superpotentials, and prove the following:

\begin{theoremAlph} [see Propositions \ref{prop:deformedKoszulNoCycle} and \ref{prop:deformedCYNoCycle}] {\it
Let $\mathsf{A}_{q,I}$ be the deformation of $\mathsf{A}_q$ corresponding to a cycle-free subset $I$ of the set of colored edges in its smoothing diagram.  Then  $\mathsf{A}_{q,I}$ is Koszul, Calabi--Yau and Artin--Schelter regular.}
\end{theoremAlph}
Combined with the results of \cite{Chirvasitu2021} concerning the Feigin--Odesskii algebras, we deduce that for an arbitrary smoothing diagram, the same properties hold for all but at most countably many values of the deformation parameter.

\subsection{Application to Kontsevich's conjecture on deformation quantization}
As we mentioned already, the algebra $\mathsf{A}_q$, for $q_{ij}=e^{\ps_{ij}}$, is Kontsevich's canonical quantization of the Poisson bracket $\{x_i,x_j\} = \lambda_{ij}x_ix_j$.   In \autoref{sec:kontsevich}, we identify the filtered deformations of $\mathsf{A}_q$, which we explicitly constructed, with the canonical quantizations of the corresponding filtered deformations of the Poisson bracket.
\begin{theoremAlph}[see \autoref{thm:kontsevich}]\emph{
If a Poisson structure on $\bP^{n-1}$ admits a filtered log symplectic toric degeneration, then its canonical quantization is gauge equivalent to one for which the star products of the coordinate functions are convergent power series. }
  \end{theoremAlph}
This shows that Kontsevich's 2001 conjecture~\cite[Conjecture 1]{Kontsevich2001} holds up to isomorphism of algebras for this class of Poisson structures.  To our knowledge, this is the first broad class of quadratic Poisson structures  for which this has been verified.

\subsection{How to generalize to arbitrary fields}
In this paper, for readability, after some preliminaries that are insensitive to the ground field $k$, we restrict to algebras over $k=\bC$, from \autoref{sec:ee} through \autoref{sec:CY}. For our applications to Kontsevich's conjecture in \autoref{sec:kontsevich}, we extend to the setting of power and Laurent series.

However, all of the results, except for Theorem \ref{thm:good-q} and the results of \autoref{subsec:FOcycle} and \autoref{sec:kontsevich}, can be generalized, mostly without change, to arbitrary fields $k$ of characteristic zero.  To do this, one can either work with a subring $R \subseteq k$ where the exponential function $\exp: R \to k^\times$ makes sense (in particular taking sums to products), or relax the condition $q=\EE(\ps)$ with $\ps$ generic to merely requiring that the sum-zero weights of $\HH(\mathsf{A}_q)$ be a subset of the weights of the corresponding Poisson cohomology groups $\HP(\mathsf{B}_\ps)$ for some $\ps$ of corank one,
along with the requirement that $q$ be normalized. 
%Note that while the formulae for the Feigin--Odesskii algebras in \autoref{subsec:FOcycle} use theta functions, which only make sense for $k=\bC$, they can be re-expressed in terms of sections of line bundles on elliptic curves, so that they make sense for general $k$.  

\subsection{Acknowledgements}
We thank Ryo Kanda for helpful correspondence regarding the results of \cite{Chirvasitu2021}. Thanks to Colin Ingalls and Sarah Witherspoon for pointing out the article \cite{Gr16}, and Pavel Etingof for useful discussions.
%\brent{others to thank for conversation?} 

This project was conceived during the authors' participation in the Oberwolfach Research Fellows program in November 2023. We are grateful to the MFO staff for their hospitality and for creating an excellent research environment. M.M.~thanks the Max Planck Institute for Mathematics for their hospitality.  B.P.~was supported by a faculty startup grant at McGill University, a New university researchers startup grant from the Fonds de recherche du Qu\'ebec -- Nature et technologies (FRQNT), and by the Natural Sciences and Engineering Research Council of Canada (NSERC), through Discovery Grant RGPIN-2020-05191.

\section{$q$-symmetric algebras and their Hochschild cohomology}

\subsection{Multiplicatively alternating matrices}
Let $\bk$ be a field of characteristic zero, and let $\bkx = \bk\setminus\{0\}$ be the multiplicative group of invertible elements.

\begin{definition}A \textit{multiplicatively alternating matrix}  (of size $n$)\footnote{The terminology is justified by the fact that %if the abelian group $\bkx$ is viewed as a $\bZ$-module, then 
the $\bZ$-bilinear form $(~,~):\mathbb{Z}^n \times \mathbb{Z}^n \to \bk^\times$ defined by $({\bf u},{\bf v})=\prod_{i,j} q_{ij}^{u_iv_j}$ is alternating, i.e. $({\bf u},{\bf u})=1$, for all ${\bf u}\in\mathbb{Z}^n$} is a matrix $q \in \Mat_n(\bk)$, whose entries $q_{ij}$ are all invertible, and such that $q_{ii}=1$ for all $i$ and $q_{ij}=q_{ji}^{-1}$ for all $i,j$.  We denote by $\mskew{\bk}$ the set of all such matrices.
\end{definition}

\subsection{$q$-symmetric algebras}

Let $\sfV = k^n$ with linear coordinates  $x_0, \ldots, x_{n-1} \in \sfV^*$. We denote the dual basis by $\partial_0, \ldots, \partial_{n-1} \in \sfV$.  Given a multiplicatively alternating matrix $q \in \mskew{\bk}$, the $q$-symmetric algebra $\mathsf{A}_q=\mathsf{S}_q(\mathsf{V}^*)$ is the associative $\bk$-algebra generated by $x_0, \ldots, x_{n-1}$
modulo the relations
$$
x_i x_j  = q_{ij} x_j x_i.
$$
Let $R \subseteq \mathsf{V}^* \otimes \mathsf{V}^*$ be the span of the relations $x_i x_j - q_{ij} x_j x_i$.

We denote the quadratic dual algebra by $\mathsf{A}_q^!=\bigwedge_q(\mathsf{V})$, with product denoted $\wedge_q$.  Concretely, $\mathsf{A}_q^!$ is the algebra generated by $\partial_0, \ldots, \partial_{n-1}$, with defining relations
$$
\partial_i \wedge_q \partial_j = -q_{ji} ~\partial_j \wedge_q \partial_i.
$$

\begin{remark} The terminology ``$q$-symmetric algebra'' can be justified as $\mathsf{A}_q$ can be interpreted as a symmetric algebra in the symmetric monoidal category of $\bZ^n$-graded vector spaces with braiding given on homogeneous elements by $x \otimes y \mapsto \prod_{i,j} q_{ij}^{|x|_i |y|_j}$, for $|x|$ the multidegree of $x$ and $|x|_i$ the $i$-th component.   Namely, it is the symmetric algebra generated by the standard $n$-dimensional vector space which is a direct sum of one-dimensional vector spaces of each degree $e_i=(0,\ldots,0,1,0,\ldots,0)$.   
Also, from this point of view, the quadratic dual $\mathsf{A}_q^!$ is an exterior algebra (or, if using a sign on the braiding, then it can also be viewed as a symmetric algebra on odd generators).
  \end{remark}

\subsection{Torus action} 
The torus $(\bkx)^n$ acts on $\sfV=k^n$ by rescaling the coordinates $x_i$.  This action preserves the defining relations of $\mathsf{A}_q$, and thus induces dual actions by algebra automorphisms of $\mathsf{A}_q$ and $\mathsf{A}_q^!$, decomposing them into weight spaces
\begin{align*}
\mathsf{A}_q &= \bigoplus_{\mathbf{w}\in\bZ^n} \mathsf{A}_q^{\mathbf{w}} & \mathsf{A}_q &= \bigoplus_{\mathbf{w}\in\bZ^n} (\mathsf{A^!}_q)^{\mathbf{w}}
\end{align*}
Note that each weight space is at most one-dimensional, e.g.~for $\mathsf{w}=(w_0,\ldots,w_{n-1}) \in \bZ^n$, the subspace $\mathsf{A}_a^{\mathbf{w}}$ is spanned by the monomial $x_0^{w_0}\cdots x_{n-1}^{w_{n-1}}$ if $w_i \ge 0$ for all $i$, and is zero otherwise.

\subsection{The Koszul resolution} 
Let  $\mathsf{A}_q^{\ac} \subseteq T\mathsf{V}^*$ be the ``quadratic dual coalgebra'', i.e.~the graded linear dual of $\mathsf{A}_q^!$.  It is explicitly given as the subspace
\[
  \mathsf{A}_q^{\ac} = k \oplus \sfV^* \oplus \bigoplus_{m \geq 2} \bigcap_{0 \leq i \leq m-2} (\sfV^*)^{\otimes i} \otimes R \otimes (\sfV^*)^{\otimes (m-i-2)}.
  \]
We equip $\mathsf{A}_q^{\ac}$ with the grading by tensor degree, so $\mathsf{A}_q^{\ac} = \bigoplus_{m \geq 0} (\mathsf{A}_q^{\ac})_m$. The elements of $\mathsf{A}_q^{\ac}$ are linear combinations of $d_{i_1} \wedge_q d_{i_2} \wedge_q ... \wedge_q d_{i_m}$, where $d_i := x_i$, and $d_i \wedge_q d_j := d_i \otimes d_j - q_{ij} d_j \otimes d_i$.

Consider the complex $K'=K'(\mathsf{A}_q)$ of left $\mathsf{A}_q$-modules given by
$$
\xymatrix{
\mathsf{A}_q & \ar[l] \mathsf{A}_q \otimes (\mathsf{A}_q^{\ac})_1 & \ar[l] \mathsf{A}_q \otimes (\mathsf{A}_q^{\ac})_2 & \ar[l] \dots,}
$$
with differential given by the splitting map:
$$\delta(f \otimes (d_{i_1} \otimes \cdots \otimes d_{i_m})) = f x_{i_1} \otimes (d_{i_2} \otimes \cdots \otimes d_{i_m}).
$$

We view $\bk$ as a left $\mathsf{A}_q$-module, where the generators $x_i$ all act by zero. The following result is standard:
\begin{lemma}\label{lm:qPolyKoszul}
The complex $K'(\mathsf{A}_q)$ is a minimal free resolution of the left $\mathsf{A}_q$-module $\bk$. In other words, the algebra $\mathsf{A}_q$ is Koszul.
\end{lemma}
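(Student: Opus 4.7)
The plan is to establish Koszulity of $\mathsf{A}_q$ by a PBW basis argument via Bergman's Diamond Lemma, and then to observe that $K'$ is the standard minimal Koszul resolution of $\bk$ obtained from that PBW basis.

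The first step I would carry out is to orient the defining relations as the rewriting rules $x_j x_i \mapsto q_{ij} x_i x_j$ for $i<j$, which reduces every word in the $x_i$ to a linear combination of ``ordered monomials'' $x_0^{a_0} x_1^{a_1} \cdots x_{n-1}^{a_{n-1}}$. To apply the Diamond Lemma and conclude that these ordered monomials form a $\bk$-basis of $\mathsf{A}_q$, I would check the single class of overlap ambiguities $x_k x_j x_i$ with $i<j<k$. Both reduction orders yield the common answer $q_{ij} q_{ik} q_{jk} \cdot x_i x_j x_k$ (a routine pass with the single rule), so the overlaps are resolvable and we obtain the PBW basis; in particular $\mathsf{A}_q$ has the Hilbert series $(1-t)^{-n}$ of the polynomial ring, and an entirely parallel computation for the quadratic dual shows that the ordered $q$-wedges $\partial_{i_1}\wedge_q\cdots\wedge_q\partial_{i_m}$ with $i_1<\cdots<i_m$ form a basis of $\mathsf{A}_q^!$, giving Hilbert series $(1+t)^n$.

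Next I would invoke Priddy's theorem: the existence of a quadratic Gr\"obner basis forces $\mathsf{A}_q$ to be Koszul, so the complex $K'$, which is manifestly the standard Koszul complex associated to the pair $(\mathsf{A}_q,\mathsf{A}_q^{\ac})$, is a resolution of $\bk$. Alternatively, one can verify exactness of $K'$ by hand: decompose $K'$ into its torus weight components, which via the PBW bases are finite-dimensional complexes isomorphic, up to the nonzero scalars coming from the $q_{ij}$, to the weight pieces of the Koszul complex of $\bk[x_0,\ldots,x_{n-1}]$; their acyclicity in positive degrees is inherited from the classical commutative case.

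Minimality is essentially tautological from the formula for $\delta$: the image of $1\otimes(d_{i_1}\otimes\cdots\otimes d_{i_m})$ equals $x_{i_1}\otimes(d_{i_2}\otimes\cdots\otimes d_{i_m})$, which lies in $\mathfrak{m}\otimes(\mathsf{A}_q^{\ac})_{m-1}$ for $\mathfrak{m}=(x_0,\ldots,x_{n-1})$ the augmentation ideal, so $\delta$ vanishes after tensoring with $\bk$ over $\mathsf{A}_q$. I expect the only genuinely computational step to be the Diamond-Lemma overlap check, but this reduces to observing the manifest symmetry of the scalar $q_{ij}q_{ik}q_{jk}$ between the two reduction paths, so there is no substantial obstacle---consistent with the lemma being labeled ``standard,'' the proof is really just an assembly of established ingredients.
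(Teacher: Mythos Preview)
Your argument is correct, but the paper takes a different and more direct route.  Rather than passing through the Diamond Lemma and Priddy's theorem, the paper observes that, as a complex of vector spaces, $K'(\mathsf{A}_q)$ is isomorphic to the tensor product $\bigotimes_{i=0}^{n-1} K'(\bk[x_i])$, where each factor is the two-term complex $\bk[x_i] \leftarrow \bk[x_i]\otimes d_i$ with differential given by multiplication; the isomorphism is given by an explicit $q$-antisymmetrization map on the tensor factors.  Since each single-variable Koszul complex resolves $\bk$, so does their tensor product.  Your approach has the virtue of invoking only standard black boxes (and the PBW basis you establish is used elsewhere in the paper anyway), while the paper's approach is entirely self-contained and exhibits the structure of the complex without appealing to Gr\"obner-basis machinery.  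Your alternative suggestion---decomposing $K'$ into torus weight components and comparing with the commutative Koszul complex---is quite close in spirit to what the paper does, the tensor-product decomposition being essentially a packaging of that same idea.
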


\begin{proof}
  As a complex of vector spaces, $K'(\mathsf{A}_q)$ is isomorphic to the tensor product of complexes $K'(\mathbb{C}[x_i])=\big(\mathbb{C}[x_i] \longleftarrow \mathbb{C}[x_i] \otimes d_i 
  \big)$, with map given by multiplication. The isomorphism is given by $q$-antisymmetrization of the $x_i$ factors, that is,
  \[
    d_{i_1} \otimes \cdots \otimes d_{i_m} \mapsto \sum_{\sigma \in S_m} \text{sign}(\sigma) \sigma \cdot d_{i_1} \otimes \cdots \otimes d_{i_m}, 
  \]
  using the action of $S_m$ on tensors generated by
  \[
    (j,j+1) \cdot d_{i_1} \otimes \cdots \otimes d_{i_m} = q_{i_j i_{j+1}} d_{i_1} \otimes \cdots \otimes d_{i_{j-1}} \otimes d_{i_{j+1}} \otimes d_{i_{j}} \otimes d_{i_{j+2}} \otimes \cdots \otimes d_{i_m}. \qedhere
    \]
\end{proof}

\subsection{Hochschild cohomology of $\mathsf{A}_q$}
In this section we give a simple, explicit description of the Hochschild cohomology of $\mathsf{A}_q$, highlighting the weights that occur therein.  This result is not new, and can be extracted from more general results in the literature, e.g., \cite[Theorem 3.3]{Gr16}.

Consider the complex $K=K(\mathsf{A}_q)$ of free $\mathsf{A}_q$-bimodules given by
$$
\xymatrix{
\mathsf{A}_q \otimes \mathsf{A}_q& \ar[l] \mathsf{A}_q \otimes (\mathsf{A}_q^{\ac})_1 \otimes \mathsf{A}_q& \ar[l] \mathsf{A}_q \otimes (\mathsf{A}_q^{\ac})_2 \otimes \mathsf{A}_q& \ar[l] \dots,}
$$
where the differential is
$$
\delta\big(f \otimes d_{i_1}  \wedge_q ... \wedge_q d_{i_m}\otimes g\big) = \sum_{j=1}^m (-1)^{j-1} q_{i_1i_j} q_{i_2i_j} ... q_{i_{j-1}i_j} f x_{i_j} \otimes d_{i_1}  \wedge_q ...~ \widehat{d_{i_j}}~... \wedge_q d_{i_m} \otimes g +
$$
$$
+ (-1)^m \sum_{j=1}^m (-1)^{m-j} q_{i_j i_{j+1}} q_{i_ji_{j+2}} ... q_{i_j i_m} f \otimes d_{i_1}  \wedge_q ...~ \widehat{d_{i_j}}~... \wedge_q d_{i_m} \otimes x_{i_j} g.
$$
Since the algebra $\mathsf{A}_q$ is Koszul, the complex $K$ is a minimal free resolution of the $\mathsf{A}_q$-bimodule $\mathsf{A}_q$. Hence the Hochschild cohomology $\mathsf{HH}(\mathsf{A}_q)$ can be computed by taking the cohomology of ${\rm Hom}(K, \mathsf{A}_q)$, where the hom is meant in the sense of $\mathsf{A}_q$-bimodules. The complex ${\rm Hom}(K, \mathsf{A}_q)$, whose elements we call \emph{$q$-polyvectors}, has the form
$$
\xymatrix{
\mathsf{A}_q \ar[r]&  \mathsf{A}_q \otimes (\mathsf{A}_q^!)_1 \ar[r]&  \mathsf{A}_q \otimes (\mathsf{A}_q^!)_2 \ar[r]&  \dots,}
$$
with the differential
$$
\mathsf{d}_q (h \otimes \partial_{i_1} \wedge_q ...\wedge_q \partial_{i_m}) = \sum_{j=0}^{n-1} x_j h \otimes \partial_j \wedge_q  \partial_{i_1} \wedge_q ...\wedge_q \partial_{i_m} - (-1)^m \sum_{j=0}^{n-1} h x_j \otimes  \partial_{i_1} \wedge_q ...\wedge_q \partial_{i_m} \wedge_q \partial_j.
$$
The $(\bkx)^n$-action on $\mathsf{A}_q$ induces an action on the Hochschild cohomology, giving a decomposition into weight spaces $\mathsf{HH}(\mathsf{A}_q) = \oplus_{\mathbf{w}}\mathsf{HH}(\mathsf{A}_q)^\mathbf{w}$.  We shall now give an explicit description of the nonzero weight spaces, as a function of the multiplicatively alternating matrix $q$.  To do so, we introduce the following terminology.

\begin{definition} \label{def:HHcontributing}
Let $q \in \mskew{\bk}$ be a multiplicatively alternating matrix of size $n$.  
A $(\bk^\times)^n$-weight $\mathbf{w}=(w_0,...,w_{n-1})\in\mathbb{Z}^{n}$ is called \textit{$q$-Hochschild-contributing} if the following conditions hold:
\begin{enumerate}
\item The inequality $w_i\ge -1$ holds for every $i$, and
\item The identity
\begin{align}
\prod_{j=0}^{n-1} q_{ij}^{w_j} = 1 \label{eq:HHcontrib}
\end{align}
holds for every $i$ such that $w_i\ge 0$.
\end{enumerate}
\end{definition}
The terminology is justified by the following result.
\begin{lemma}\label{lm:HHtoric}
The weight component $\mathsf{HH}(\mathsf{A}_q)^\mathbf{w}$ is non-zero if and only if $\mathbf{w}$ is Hochschild-contributing, in which case
$$
\mathsf{HH}(\mathsf{A}_q)^\mathbf{w} \cong \bk\rho \otimes {\textstyle \bigwedge_q} \bigoplus\limits_{j:w_j\ge0} \bk x_j\partial_j,
$$
where $\rho$ is the unique, up to a scalar, $q$-polyvector of weight $\mathbf{w}$ of the minimal degree.
\end{lemma}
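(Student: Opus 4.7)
The plan is to compute $\mathsf{HH}(\mathsf{A}_q)$ directly from the $q$-polyvector complex $(\mathsf{A}_q \otimes \mathsf{A}_q^!, \mathsf{d}_q)$ constructed above, exploiting the fact that the $(\bkx)^n$-action decomposes it into weight subcomplexes, each of which can be analyzed separately.

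Fix $\mathbf{w} \in \mathbb{Z}^n$. A monomial basis element $x^{\mathbf{v}} \otimes \partial_S$ has weight $\mathbf{v} - \mathbf{1}_S$, so it contributes to weight $\mathbf{w}$ precisely when $v_i = w_i + [i \in S]$ for every $i$. Requiring $v_i \geq 0$ then forces $w_i \geq -1$ for all $i$ and $I_- := \{i : w_i = -1\} \subseteq S$. Writing also $I_0 := \{i : w_i \geq 0\}$, the weight-$\mathbf{w}$ subcomplex thus has a basis $\{e_S\}$ indexed by $I_- \subseteq S \subseteq I_- \sqcup I_0$, of total rank $2^{|I_0|}$. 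The next step is to make the differential explicit on this basis: using $x_j \cdot x^{\mathbf{v}} = \prod_k q_{jk}^{v_k} \cdot x^{\mathbf{v}} x_j$ together with the $q$-antisymmetry $\partial_I \wedge_q \partial_j = (-1)^{|I|} \prod_{k \in I} q_{jk} \cdot \partial_j \wedge_q \partial_I$, the two halves of $\mathsf{d}_q$ combine to give
\begin{equation*}
\mathsf{d}_q(e_S) = \sum_{j \notin S} (C_j(\mathbf{w}) - 1)\, \mu_j(S)\, e_{S \sqcup \{j\}}, \qquad C_j(\mathbf{w}) := \prod_{k} q_{jk}^{w_k},
\end{equation*}
where $\mu_j(S) \in \bkx$ is a nonzero scalar built from products of $q_{jk}$'s and a sign. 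The factor $C_j(\mathbf{w}) - 1$ vanishes precisely when the relation \eqref{eq:HHcontrib} of \autoref{def:HHcontributing} holds at the index $j$.

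If $\mathbf{w}$ is $q$-Hochschild-contributing, then $C_j(\mathbf{w}) = 1$ for all $j \in I_0$, while for $j \in I_-$ one has $j \in S$ automatically, so $\mathsf{d}_q$ vanishes identically on the weight-$\mathbf{w}$ subcomplex. The cohomology therefore equals the whole subcomplex, and the map $e_{I_- \sqcup T} \mapsto \rho \cdot \bigwedge_{j \in T} x_j \partial_j$, where $\rho := x^{\mathbf{v}_{\min}} \otimes \partial_{I_-}$ is the (unique up to scalar) minimal-degree weight-$\mathbf{w}$ polyvector, yields the claimed identification with $\bk\rho \otimes \bigwedge_q \bigoplus_{j \in I_0} \bk x_j \partial_j$. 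Conversely, if $\mathbf{w}$ is not contributing, then either some $w_i \leq -2$ (so the subcomplex is already zero) or some $j_0 \in I_0$ has $C_{j_0}(\mathbf{w}) \neq 1$. In the latter case, the coefficients $\beta_{j_0}(S) := (C_{j_0}(\mathbf{w}) - 1)\,\mu_{j_0}(S)$ are units for every $S \not\ni j_0$, and the formula $\iota(e_S) := \beta_{j_0}(S \setminus j_0)^{-1} e_{S \setminus j_0}$ when $j_0 \in S$, and $\iota(e_S) := 0$ otherwise, defines a contracting homotopy: the identity $\mathsf{d}_q \iota + \iota \mathsf{d}_q = \mathrm{id}$ reduces to the pairwise relations $\beta_{j_1}(T)\beta_{j_2}(T \cup j_1) + \beta_{j_2}(T)\beta_{j_1}(T \cup j_2) = 0$ extracted from $\mathsf{d}_q^2 = 0$, which forces acyclicity.

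The main bookkeeping obstacle is the derivation of the compact formula for $\mathsf{d}_q(e_S)$: one must track carefully the $q$-scalars arising from commuting the $x_j$'s past monomials in $\mathsf{A}_q$ and from reordering the $\partial$-factors in $\mathsf{A}_q^!$. Once that formula is in hand, both the contributing and non-contributing cases reduce to elementary manipulations of a Koszul-type differential on an exterior algebra.
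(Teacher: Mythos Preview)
Your proof is correct and follows essentially the same strategy as the paper's: compute the weight-$\mathbf{w}$ differential explicitly, observe it vanishes in the Hochschild-contributing case, and exhibit a contracting homotopy otherwise.  The only cosmetic difference is that the paper first transports the weight-$\mathbf{w}$ subcomplex via an explicit isomorphism $\varphi$ to the ordinary (commutative) complex $\big(\mathsf{S}(\mathsf{V}^*)\otimes\bigwedge(\mathsf{V})\big)^{\mathbf{w}}$, where the differential becomes the standard Koszul-type operator $\alpha\wedge(-)$ with $\alpha=\sum_j(1-\prod_k q_{kj}^{w_k})\,x_j\partial_j$; the homotopy is then simply $\tfrac{1}{\alpha_{j_0}x_{j_0}}\iota_{d_{j_0}}$ for any $j_0$ with $\alpha_{j_0}\neq 0$.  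You instead work directly in the $q$-complex, absorbing the $q$-scalars into unspecified units $\mu_j(S)$ and verifying the homotopy identity from $\mathsf{d}_q^2=0$---this is equivalent but trades the one-line transport step for a short direct check.
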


\begin{proof}
For a $q$-polyvector $\tau = h \otimes \partial_{i_1}\wedge_q ... \wedge_q \partial_{i_m}$ of weight $\mathbf{w}$, we have
$$
\mathsf{d}_q (\tau) = \sum_{j=0}^{n-1} x_j h \otimes \partial_j \wedge_q  \partial_{i_1} \wedge_q ...\wedge_q \partial_{i_m} - (-1)^m \sum_{j=0}^{n-1} h x_j \otimes  \partial_{i_1} \wedge_q ...\wedge_q \partial_{i_m} \wedge_q \partial_j=
$$
$$
=\sum_{j=0}^{n-1} \left(1 - \prod_{k=0}^{n-1} q_{kj}^{w_k} 
\right)x_j h \otimes \partial_j \wedge_q  \partial_{i_1} \wedge_q ...\wedge_q \partial_{i_m}.
$$
Let us choose the linear isomorphism $\varphi$ between the weight $\mathbf{w}$-parts of $\mathsf{S}_q(\mathsf{V}^*) \otimes \bigwedge_q(\mathsf{V})$ and $\mathsf{S}(\mathsf{V}^*) \otimes \bigwedge(\mathsf{V})$ such that 
$$\varphi(x_{j_1} ... x_{j_s} \otimes \partial_{j_1} \wedge_q ... \wedge_q \partial_{j_s} \wedge_q \rho)=x_{j_1} ... x_{j_s} \otimes \partial_{j_1} \wedge ... \wedge \partial_{j_s} \wedge \rho,$$ 
whenever $j_1<...<j_s$ and $w_{j_1},...,w_{j_s}\ge0$. Then we have $\varphi (\mathsf{d}_q (\tau)) = \alpha \wedge \varphi(\tau)$, where 
\begin{equation} \label{eq:KoszulDiffAq}
\alpha = \sum_{j=0}^{n-1}\alpha_j~ x_j \otimes \partial_j, ~~~\alpha_j = 1 - \prod_{k=0}^{n-1} q_{kj}^{w_k} 
\end{equation}
If $\mathbf{w}$ is Hochschild-contributing, the differential $\alpha \wedge$ restricted to $\big(\mathsf{S}(\mathsf{V}^*) \otimes \bigwedge(\mathsf{V})\big)^\mathbf{w}$ is zero. Otherwise, there is at least one index $j$ with $w_j\ge 0$ and $\alpha_j\not=0$. Then the contraction operator
\begin{equation}\label{eq:homotopyKoszulDiff}
\dfrac{1}{\alpha_j x_j} \iota_{d_j}: \Big(\mathsf{S}^\bullet(\mathsf{V}^*) \otimes \wedge^\bullet(\mathsf{V})\Big)^\mathbf{w}\longrightarrow 
\Big(\mathsf{S}^{\bullet-1}(\mathsf{V}^*) \otimes \wedge^{\bullet-1}(\mathsf{V})\Big)^\mathbf{w}
\end{equation} 
defines the homotopy between the transported Koszul differential $\varphi \circ \mathsf{d}_q \circ \varphi^{-1}$ and the zero differential.
\end{proof}

\subsection{The Calabi--Yau condition}

Calabi--Yau algebras were defined by Ginzburg in \cite{Ginzburg2007} as a non-commutative generalization of Calabi--Yau manifolds. An algebra $\mathsf{A}$ is called $n$-\textit{Calabi--Yau} if  $\mathsf{A}$ admits a finite projective $\mathsf{A}$-bimodule resolution 
and $\Ext^\bullet_{\mathsf{A}-\text{bimod}}(\mathsf{A}, \mathsf{A} \otimes \mathsf{A}) \cong \mathsf{A}[-n]$ as an $\mathsf{A}$-bimodule. 

In the case of the algebras $\mathsf{A}_q$, the Calabi--Yau property can be expressed in terms of the following elementary condition on matrices.
\begin{definition}
A multiplicatively alternating matrix $q$ is \emph{normalized} if its rows (or equivalently, its columns) multiply to one, i.e.
\begin{align}
\prod_j q_{ij} = 1 \qquad \textrm{for all }i.
\end{align}
\end{definition}
Then by the calculations in \cite[Example 5.5]{Rogalski2014} or in \autoref{sec:CY} below, we have the following.
\begin{lemma}\label{lm:qPolyCY}
Let $q$ be a multiplicatively alternating matrix.  Then the algebra $\mathsf{A}_q$ is Calabi--Yau if and only if $q$ is normalized.
\end{lemma}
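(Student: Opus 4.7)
The plan is to identify the Nakayama automorphism $\nu$ of $\mathsf{A}_q$ explicitly and determine when it is the identity. Since $\mathsf{A}_q$ is Koszul and Artin--Schelter regular (as recorded in the introduction), it is twisted Calabi--Yau in the sense of Ginzburg, i.e.\ $\mathrm{Ext}^\bullet_{\mathsf{A}_q^e}(\mathsf{A}_q,\mathsf{A}_q^e)$ is concentrated in degree $n$ and isomorphic to the twisted bimodule $\mathsf{A}_q^\nu$ for a unique graded automorphism $\nu$ of $\mathsf{A}_q$. Since any inner degree-preserving automorphism of $\mathsf{A}_q$ must be trivial (the degree-zero part is $\bk$), $\mathsf{A}_q$ is Calabi--Yau if and only if $\nu = \mathrm{id}$.

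To compute $\nu$, I apply $\mathrm{Hom}_{\mathsf{A}_q^e}(-,\mathsf{A}_q^e)$ to the bimodule Koszul resolution $K(\mathsf{A}_q)$, obtaining the complex $\mathsf{A}_q \otimes (\mathsf{A}_q^!)_\bullet \otimes \mathsf{A}_q$ (with outer bimodule structure) whose top differential $d_{n-1}$ encodes $\nu$. Let $\omega := \partial_0 \wedge_q \cdots \wedge_q \partial_{n-1}$ span the one-dimensional $(\mathsf{A}_q^!)_n$, and write $\omega_i := \partial_0 \wedge_q \cdots \widehat{\partial_i} \cdots \wedge_q \partial_{n-1}$. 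Unwinding the $q$-wedge product yields nonzero scalars $s_i^L, s_i^R \in \bk^\times$ with $\partial_i \wedge_q \omega_i = s_i^L \omega$ and $\omega_i \wedge_q \partial_i = s_i^R \omega$. Using $\partial_j \wedge_q \partial_j = 0$, only the $j=i$ term survives in the sum defining $d_{n-1}(h \otimes \omega_i \otimes g)$, producing the bimodule relation $s_i^L \cdot (x_i \otimes 1) \equiv (-1)^{n-1} s_i^R \cdot (1 \otimes x_i)$ modulo the image of $d_{n-1}$. Hence $\mathrm{Ext}^n_{\mathsf{A}_q^e}(\mathsf{A}_q,\mathsf{A}_q^e) \cong \mathsf{A}_q^\nu$ with $\nu(x_i) = (-1)^{n-1} (s_i^R / s_i^L) \, x_i$.

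A routine bookkeeping using $\partial_i \wedge_q \partial_j = -q_{ji}\,\partial_j \wedge_q \partial_i$ gives $s_i^L = (-1)^i \prod_{j<i} q_{ji}$ and $s_i^R = (-1)^{n-1-i} \prod_{j>i} q_{ij}$; all signs cancel once combined with the $(-1)^{n-1}$ prefactor, and substituting $q_{ji} = q_{ij}^{-1}$ yields the clean formula $\nu(x_i) = \big( \prod_j q_{ij} \big) \, x_i$. Therefore $\nu = \mathrm{id}$ if and only if $\prod_j q_{ij} = 1$ for every $i$, i.e.\ if and only if $q$ is normalized. The main technical step is the coordinated sign- and $q$-factor bookkeeping in the ratio $s_i^R / s_i^L$; this is most painlessly carried out by restricting to the single homogeneous $(\bk^\times)^n$-weight $(1,1,\ldots,1)$, where everything collapses to a one-dimensional identity on the top $q$-exterior power of $\mathsf{V}$.
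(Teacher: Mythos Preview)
Your proof is correct. The paper does not actually give a self-contained proof of this lemma: it simply cites \cite[Example 5.5]{Rogalski2014} and refers forward to \autoref{sec:CY}. Your argument---computing the Nakayama automorphism directly from the top of the dualized Koszul bimodule resolution and finding $\nu(x_i)=\big(\prod_j q_{ij}\big)x_i$---is precisely the standard calculation one finds in Rogalski's notes, so you are essentially reproducing the first of the two cited sources.

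The second route the paper alludes to, in \autoref{sec:CY}, is genuinely different: there one writes $\mathsf{A}_q=\mathcal{D}(\Phi_0)$ for the $q$-antisymmetrized top tensor $\Phi_0=x_0\wedge_q\cdots\wedge_q x_{n-1}$ and invokes the Bocklandt--Schedler--Wemyss criterion (\autoref{thm:BSW}). From that point of view the normalization condition $\prod_j q_{ij}=1$ is exactly what is needed for $\Phi_0$ to be invariant under the signed cyclic action \eqref{eq:ZnAction}, i.e.\ to be a superpotential. Your Nakayama computation is more elementary and gives the ``only if'' direction cleanly without appealing to the superpotential formalism; the superpotential approach, on the other hand, is what the paper actually needs later to handle the deformed algebras $\mathsf{A}_{q,I}$, so it is set up with that in mind rather than as the shortest proof of the lemma itself.
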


\subsection{Non-commutative projective spaces} 
Just as the polynomial ring $k[x_0,\ldots,x_{n-1}]$ is the homogeneous coordinate ring of the projective space $\mathbb{P}^{n-1}$, the algebras  $\mathsf{A}_q$ may be viewed as homogeneous coordinate rings for non-commutative analogues of $\mathbb{P}^{n-1}$.  More precisely, the category $\mathsf{qgr}(\mathsf{A}_q)$ of graded $\mathsf{A}_q$-modules modulo torsion modules is a flat deformation of the quasi-coherent sheaves on $\mathbb{P}^{n-1}$ over the parameter space $q \in \mskew{k}$.  However, different algebras $\mathsf{A}_q$ may have equivalent categories of graded modules.  For instance, we have the following:

\begin{lemma}
Let $q\in\mskew{\bk}$ be a multiplicatively alternating matrix, let  $\tau = (\tau_i)_i \in (\bkx)^n$ be an element of the torus (viewed as a diagonal matrix) and let $\tau q \tau^{-1}$ be the conjugation, i.e.~the multiplicatively alternating matrix with entries
\[
(\tau q \tau^{-1})_{ij} = \frac{\tau_i}{\tau_j} q_{ij}
\]
Then the algebras $\mathsf{A}_q$ and $\mathsf{A}_{\tau q\tau^{-1}}$ have equivalent categories of graded modules.
\end{lemma}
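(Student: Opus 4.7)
The plan is to identify $\mathsf{A}_{\tau q \tau^{-1}}$ as a Zhang twist of $\mathsf{A}_q$ by a graded algebra automorphism, and then invoke Zhang's theorem that Zhang-twist equivalent graded algebras have equivalent categories of graded modules. The torus element $\tau$ gives a degree-preserving algebra automorphism $\sigma_\tau$ of $\mathsf{A}_q$ defined by $\sigma_\tau(x_i) = \tau_i x_i$, and the sequence $\{\sigma_\tau^n\}_{n \geq 0}$ is a twisting system in Zhang's sense.

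Recall that the Zhang twist $\mathsf{A}_q^{\sigma_\tau}$ is by definition the same graded vector space as $\mathsf{A}_q$ but with multiplication twisted to $a * b := \sigma_\tau^{|b|}(a) \cdot b$ on homogeneous elements. The heart of the matter is a one-line check at the level of quadratic relations,
\[
x_i * x_j - \tfrac{\tau_i}{\tau_j} q_{ij}\, x_j * x_i = \tau_i x_i x_j - \tfrac{\tau_i}{\tau_j} q_{ij} \tau_j\, x_j x_i = \tau_i(x_i x_j - q_{ij} x_j x_i) = 0,
\]
which shows that sending the generator $y_i$ of $\mathsf{A}_{\tau q \tau^{-1}}$ to $x_i$ extends to a graded algebra homomorphism $\mathsf{A}_{\tau q \tau^{-1}} \to \mathsf{A}_q^{\sigma_\tau}$; it is surjective on generators and hence on the whole algebra, and injective because both sides have the Hilbert series of a polynomial ring in $n$ variables.

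To make the equivalence completely explicit, and to avoid citing Zhang, one can alternatively build it by hand: given a graded left $\mathsf{A}_q$-module $M = \bigoplus_n M_n$, equip the underlying graded vector space with an $\mathsf{A}_{\tau q \tau^{-1}}$-action by setting $y_i \cdot m := \tau_i^n x_i m$ for $m \in M_n$. A direct computation using $x_i x_j = q_{ij} x_j x_i$ verifies the defining relations of $\mathsf{A}_{\tau q \tau^{-1}}$, every morphism of graded $\mathsf{A}_q$-modules is automatically equivariant for the new actions, and the analogous construction using $\tau^{-1}$ supplies a quasi-inverse. The argument is purely formal and there is no substantive obstacle; the only care needed is to fix conventions consistently (use $a * b = \sigma_\tau^{|b|}(a)b$ rather than its opposite, equivalently the weight $\tau_i^n$ rather than $\tau_i^{-n}$) so that the resulting twist parameter comes out as $\tfrac{\tau_i}{\tau_j} q_{ij}$ and not its inverse.
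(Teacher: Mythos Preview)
Your proof is correct and follows essentially the same approach as the paper: both identify $\mathsf{A}_{\tau q\tau^{-1}}$ as a Zhang twist of $\mathsf{A}_q$ by the torus automorphism $\tau$, and reduce to the same one-line check on the quadratic relations. The paper phrases the twist via the condition $(1\otimes\eta)R_q = R_{\tau q\tau^{-1}}$ on the space of relations, while you write out the twisted product $a*b=\sigma_\tau^{|b|}(a)\cdot b$ explicitly; these are two ways of saying the same thing. Your additional direct construction of the module equivalence (bypassing the citation to Zhang) is a nice self-contained bonus that the paper does not include.
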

\begin{proof}
By a theorem of Zhang~\cite{Zhang1996}, two graded algebras have equivalent categories of graded modules if and only if one is isomorphic to a graded twist (aka Zhang twist) of the other.  In the case at hand of quadratic algebras  $\mathsf{A}_p = T(\sfV^*)/(R_p)$ and $\mathsf{A}_q = T(\sfV^*)/(R_q)$, such a twist is equivalent to a linear automorphism $\eta \in GL(\sfV^*)$ such that $(1 \otimes \eta)R_p = R_q$. Taking $\eta=\tau$, so that $\tau(x_i) = \tau_i x_i$, we have
\[
(1\otimes \tau)(x_i\otimes x_j - q_{ij} x_j \otimes x_i) = x_i \otimes (\tau_j x_j) - q_{ij} x_j \otimes (\tau_i x_i) = \tau_j\cdot \left(x_i\otimes x_j - \frac{\tau_i}{\tau_j}q_{ij} x_j \otimes x_i\right)
\]
so that $1 \otimes \tau$ sends a basis of $R_q$ to a basis of $R_{\tau q \tau^{-1}}$, as desired. 
\end{proof}

The following then shows that every category of the form $\mathsf{qgr}(\mathsf{A}_q)$, $q \in \mskew{\bk}$, is equivalent to one for which the algebra $\mathsf{A}_q$ is Calabi--Yau, provided the field $\bk$ admits $n$th roots.
\begin{lemma}
Suppose that every element of $\bk$ admits an $n$th root in $\bk$. Then for every multiplicatively alternating matrix $q\in\mskew{\bk}$, there exists a torus element $\tau \in (\bkx)^n$ such that $\tau q \tau^{-1}$ is normalized.
\end{lemma}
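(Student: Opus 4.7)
The plan is to unpack what the normalization condition says for $\tau q \tau^{-1}$ and reduce it to a solvable system of $n$th-power equations. Writing $r_i := \prod_{j} q_{ij}$ for the row products of $q$, and $T := \prod_j \tau_j$, direct computation gives
\[
\prod_{j=0}^{n-1} (\tau q \tau^{-1})_{ij} \;=\; \prod_{j=0}^{n-1} \frac{\tau_i}{\tau_j}\,q_{ij} \;=\; \frac{\tau_i^n}{T}\,r_i.
\]
Thus normalization of $\tau q \tau^{-1}$ is equivalent to the $n$ equations $\tau_i^n = T/r_i$ for $i = 0,\ldots,n-1$.

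First I would observe the necessary consistency condition: taking the product of all $n$ equations, one needs $T^n = T^n \big/\prod_i r_i$, i.e.~$\prod_i r_i = 1$. This indeed holds, because $\prod_i r_i = \prod_{i,j} q_{ij}$, and the multiplicative alternating conditions $q_{ii}=1$ and $q_{ji}=q_{ij}^{-1}$ imply that the factors for $i \neq j$ cancel in pairs. So the system is at least not obviously inconsistent.

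Next I would construct a solution. Using the hypothesis that $\bk$ admits $n$th roots, pick, for each $i$, an $n$th root $s_i \in \bkx$ of $r_i$. Their product $\prod_i s_i$ is then some $n$th root of $\prod_i r_i = 1$, i.e.~an $n$th root of unity $\zeta \in \bkx$. Since $\zeta$ itself has an $n$th root in $\bk$ (it equals $1$ as an $n$th power up to the choice of root), I can correct one of the $s_i$, say replace $s_0$ by $\zeta^{-1} s_0$, which is still an $n$th root of $r_0$, so that after renaming we have $\prod_i s_i = 1$. Then set $\tau_i := s_i^{-1}$. This gives $T = \prod_i \tau_i = 1$ and $\tau_i^n = r_i^{-1} = T/r_i$, so $\tau q \tau^{-1}$ is normalized.

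The only subtle point, and hence the main thing I would make sure to handle carefully, is the compatibility of the chosen $n$th roots: a priori, the independently chosen $r_i^{1/n}$ need not multiply to $1$, only to a root of unity, so one must use one more adjustment by a root of unity (itself an $n$th power in $\bk$) to force $\prod_i s_i = 1$ before defining $\tau_i = s_i^{-1}$. Everything else is a direct verification.
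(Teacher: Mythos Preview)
Your proof is correct and follows essentially the same approach as the paper: both pick $n$th roots of the (inverse) row products, note that their product is an $n$th root of unity because $\prod_i r_i = 1$, and then correct one chosen root by that root of unity to force $\prod_i \tau_i = 1$, after which the normalization identity is a one-line verification. The only cosmetic difference is that you first derive the equivalent system $\tau_i^n = T/r_i$ before solving it, whereas the paper constructs $\tau$ directly and then checks normalization.
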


\begin{proof}
Let $\nu_i := \prod_{j}q_{ij}$ and let $\tau_i$ be any $n$th root of $1/\nu_i$.  Note that
\[
\rbrac{\prod_i \tau_i}^n = \prod_i \tau_i^n = \prod_i \nu_i^{-1} = \prod_i\prod_j q_{ji} = 1
\]
since $q$ is multiplicatively alternating.  Hence $\mu := \prod_i \tau_i$ is an $n$th root of unity.  Replacing $\tau_0$ with $\mu^{-1}\tau_0$ if necessary, we may assume without loss of generality that $\prod_i \tau_i = 1$.  We then have
\[
\prod_j (\tau q \tau^{-1})_{ij} = \prod_j \rbrac{\frac{\tau_i}{\tau_j} q_{ij}} =  \frac{ \tau_i^n \prod_j q_{ij}}{\prod_j \tau_j} = \frac{\nu_i^{-1} \prod_j q_{ij}}{1} = 1
\]
as desired.
\end{proof}

\section{Log canonical Poisson brackets and their Poisson cohomology}\label{sec:HP(A_pi)}

\subsection{Additively alternating matrices}
As in the previous section, $\bk$ denotes a field of characteristic zero.  We denote by $\askew{\bk} \subset \Mat_n(\bk)$ the set of alternating matrices in the classical sense, i.e.~matrices $\ps = (\ps_{ij})_{i,j=0}^{n-1} \in \Mat_{n}(\bk)$ such that $\ps_{ij} =-\ps_{ji}$ for all $i,j$.  When we wish to distinguish these from the multiplicatively alternating matrices above, we refer to them as \emph{additively alternating}.

\subsection{Poisson brackets} As in the previous section,  $\sfV = k^n$ and $x_0,\ldots,x_n$ are the standard linear coordinates on $\sfV$, so that the symmetric algebra is the polynomial ring $\mathsf{S}(\mathsf{V}^*) = \bk[x_0,...,x_{n-1}]$.

Given an additively alternating matrix $\ps$, we define a Poisson bracket on $\mathsf{S}(\sfV^*)$ by the formula
$$
\{x_i,x_j\}_\ps = \ps_{ij} x_i x_j,
$$
or equivalently as the bivector
\[
\sigma_\ps = \sum_{0\le i < j \le n-1} \ps_{ij}\partial_i \wedge \partial_j.
\]
We denote by $\mathsf{B}_\ps = (\mathsf{S}(\sfV^*),\{-,-\}_\ps)$ the resulting Poisson algebra. Note that $\sigma_\ps$ is invariant under the natural action of the torus $(\bkx)^n$, so the latter acts by automorphisms of $\mathsf{B}_\ps$.

Recall that the \emph{Poisson cohomology $\mathsf{HP}(\mathsf{B}_\ps)$} is the cohomology of the complex of alternating polyderivations
$$
\xymatrix{
\mathsf{S}(\sfV^*) \ar[r]^-{\mathsf{d}_\ps} & \mathsf{S}(\sfV^*) \otimes \mathsf{V} \ar[r]^-{\mathsf{d}_\ps} & \mathsf{S}(\sfV^*) \otimes \wedge^2\mathsf{V} \ar[r]^-{\mathsf{d}_\ps}&\mathsf{S}(\sfV^*) \otimes \wedge^3\mathsf{V} \ar[r]^-{\mathsf{d}_\ps}& ...
}
$$
where $\mathsf{d}_\ps$ is the Lichnerowicz differential, defined as the Schouten bracket with the bivector $\sigma_\ps$.  

As with the Hochschild cohomology above, the torus action breaks this complex into weight spaces $\mathsf{HP}(\mathsf{B}_\ps) = \sum_{\mathbf{w}} \mathsf{HP}(\mathsf{B}_\ps)^\mathbf{w}$ which have an explicit description as a function of the additively alternating matrix $\ps$.  Namely, the differential $\mathsf{d}_\ps$ acts on a polyvector $\rho$ of torus weight $\mathbf{w}=(w_0, ..., w_{n-1})$ by the formula
$$
\mathsf{d}_\ps \big(\rho\big) = \left(\sum_{0\le i < j \le n-1}
\ps_{ij} w_j x_i \partial_i  \right) \wedge \rho.
$$
Hence the relevant condition is the following additive counterpart of the Hochschild-contributing condition from \autoref{def:HHcontributing}.
\begin{definition}\label{def:HPcontributing}
Let $\ps$ be an additively alternating matrix.  A torus weight $\mathbf{w}=(w_0,...,w_{n-1})\in\mathbb{Z}^{n}$ is \textit{$\ps$-Poisson-contributing}, if the following conditions hold
\begin{enumerate}
\item The inequality $w_i\ge -1$ holds for every $i$, and
\item The identity
\begin{align}
\sum_{j=0}^{n-1} \ps_{ij} w_j = 0 \label{eq:HPcontrib}
\end{align}
holds for every $i$ such that $w_i\ge 0$.
\end{enumerate}
\end{definition}

We then have the following Poisson counterpart of \autoref{lm:HHtoric}.
\begin{lemma}\label{lm:HPtoric}
The weight component $\mathsf{HP}(\mathsf{B}_\ps)^\mathbf{w}$ is non-zero if and only if $\mathbf{w}$ is Poisson-contributing, in which case
$$
\mathsf{HP}(\mathsf{B}_\ps)^\mathbf{w} \cong \bk\rho \otimes \bigwedge \bigoplus\limits_{j:w_j\ge0} \bk x_j\partial_j,
$$
where $\rho$ is the unique, up to a scalar, polyvector of weight $\mathbf{w}$ of the minimal degree.
\end{lemma}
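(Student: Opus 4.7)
The plan is to mimic the proof of Lemma \ref{lm:HHtoric} essentially verbatim, replacing the multiplicative Koszul differential \eqref{eq:KoszulDiffAq} by its additive analogue. Because the Lichnerowicz differential $\mathsf{d}_\ps$ commutes with the $(\bk^\times)^n$-action, the polyderivation complex splits as a direct sum over weights $\mathbf{w} \in \bZ^n$, and it suffices to analyse each weight subcomplex separately. If $\mathbf{w}$ fails condition (1) of Definition \ref{def:HPcontributing}, i.e.\ some $w_i \le -2$, then no polyderivation has weight $\mathbf{w}$ at all and there is nothing to check, so I would focus on the case $w_i \ge -1$ for every $i$.

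First I would verify the formula for $\mathsf{d}_\ps$ stated just above Definition \ref{def:HPcontributing}: on the weight-$\mathbf{w}$ subcomplex it acts by wedging with the fixed $1$-vector
\[
\alpha_\mathbf{w} \;=\; \sum_{j=0}^{n-1} \alpha_j\, x_j\partial_j, \qquad \alpha_j \;=\; \sum_{k=0}^{n-1} \ps_{jk} w_k.
\]
This is a direct Schouten-bracket computation with the bivector $\sigma_\ps = \sum_{i<j} \ps_{ij} x_i x_j \partial_i \wedge \partial_j$: contracting a weight-$\mathbf{w}$ polyvector against $\partial_i$ or $\partial_j$ brings down the scalar $w_i$ or $w_j$ together with a factor of $x_i$ or $x_j$, and the alternating property of $\ps$ makes the cross-terms telescope. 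A convenient bookkeeping device, exactly parallel to the Hochschild case, is to introduce a linear isomorphism $\varphi$ of the weight-$\mathbf{w}$ subspace of $\mathsf{S}(\sfV^*) \otimes \wedge \sfV$ factoring out the (unique up to scalar) minimal-degree generator $\rho$, after which the differential becomes ordinary exterior multiplication by $\alpha_\mathbf{w}$ on a free exterior algebra.

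With this normal form in hand, split into two cases. If $\mathbf{w}$ is Poisson-contributing, then by condition (2) of Definition \ref{def:HPcontributing}, $\alpha_j = 0$ whenever $w_j \ge 0$; since only factors of the form $x_j \partial_j$ with $w_j \ge 0$ can actually appear inside the weight-$\mathbf{w}$ subspace, the differential vanishes identically, and the cohomology is canonically identified with $\bk\rho \otimes \bigwedge \bigoplus_{j:\,w_j \ge 0} \bk\, x_j\partial_j$ as claimed. Otherwise there exists $j$ with $w_j \ge 0$ and $\alpha_j \ne 0$, and I would introduce the contraction homotopy
\[
h_j \;=\; \tfrac{1}{\alpha_j x_j}\, \iota_{\partial_j}
\]
as in \eqref{eq:homotopyKoszulDiff}, which is well defined because any weight-$\mathbf{w}$ polyvector containing $\partial_j$ must contain $x_j$ to at least first power. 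A routine check gives $(\alpha_\mathbf{w} \wedge) \circ h_j + h_j \circ (\alpha_\mathbf{w} \wedge) = \mathrm{id}$, producing acyclicity of the subcomplex.

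The main obstacle I anticipate is the first step, namely establishing the clean formula $\mathsf{d}_\ps(-) = \alpha_\mathbf{w} \wedge (-)$ on weight-$\mathbf{w}$ polyvectors via the Schouten-bracket calculation; once that normal form is available, the remainder is a formal repetition of the Hochschild argument. A pleasant shortcut, which could replace an explicit computation, is to note that under the substitution $q_{ij} = e^{\hbar \ps_{ij}}$ and linearisation at $\hbar = 0$, the scalar $1 - \prod_k q_{kj}^{w_k}$ from \eqref{eq:KoszulDiffAq} becomes $\hbar \sum_k \ps_{jk} w_k$, so up to an overall factor of $\hbar$ the additive coefficients $\alpha_j$ are precisely the first-order expansions of their multiplicative counterparts in Lemma \ref{lm:HHtoric}.
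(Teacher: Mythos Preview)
Your proposal is correct and matches the paper's (implicit) approach: the paper does not give a separate proof of this lemma, presenting it simply as ``the Poisson counterpart of \autoref{lm:HHtoric}'', so the intended argument is precisely the verbatim translation you outline, replacing the multiplicative coefficients $1-\prod_k q_{kj}^{w_k}$ by their additive analogues $\sum_k \ps_{jk} w_k$ and reusing the same contraction homotopy. One minor notational point: your contraction operator should be written $\iota_{d_j}$ (contracting $\wedge\sfV$ against the dual basis element) rather than $\iota_{\partial_j}$, to match the paper's conventions in \eqref{eq:homotopyKoszulDiff}.
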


\subsection{Unimodularity}
\begin{definition}
An additively alternating matrix is \emph{normalized} if its rows sum to zero:
\[
\sum_j \ps_{ij} = 0
\]
for all $i$.  
\end{definition}

The following is the semiclassical analogue of the Calabi-Yau condition. A Poisson structure $\sigma$ on $\mathbb{C}^n$ is called \textit{unimodular} if there exists a holomorphic volume form on $\mathbb{C}^n$ invariant under all Hamiltonian vectors fields of $\sigma$. The following fact is well known.

\begin{lemma}\label{lm:unimodularEqualsNormalized}
Let $\lambda$ be an additively alternating matrix.  Then the Poisson algebra $\mathsf{B}_\lambda$ is unimodular if and only if $\lambda$ is normalized.
\end{lemma}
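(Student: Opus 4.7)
The plan is to reduce unimodularity of $\mathsf{B}_\lambda$ to the vanishing of the modular vector field of $\sigma_\lambda$ with respect to some holomorphic volume form on $\bC^n$, and then analyze this condition directly from the explicit shape of $\sigma_\lambda$.

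First I would take the standard volume form $\Omega = dx_0 \wedge \cdots \wedge dx_{n-1}$ and compute $\mathrm{div}_\Omega(X_{x_k})$ for each generator. Since $X_{x_k} = \sum_j \lambda_{kj}\, x_k x_j\, \partial_j$ and $\lambda_{kk}=0$, the divergence collapses to $\big(\sum_j \lambda_{kj}\big)\, x_k$, and hence
$$
\phi_\Omega = \sum_{k=0}^{n-1} \mu_k\, x_k \partial_k, \qquad \mu_k := \sum_{j=0}^{n-1} \lambda_{kj}.
$$
If $\lambda$ is normalized, every $\mu_k$ vanishes, so $\Omega$ itself is invariant under every Hamiltonian vector field and $\mathsf{B}_\lambda$ is unimodular.

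For the converse I would use the fact that any nowhere-vanishing holomorphic function on the simply connected space $\bC^n$ is the exponential $e^h$ of an entire function, so every holomorphic volume form has the shape $e^h \Omega$. A direct Cartan-calculus computation gives $\phi_{e^h \Omega} = \phi_\Omega - X_h$, so unimodularity is equivalent to solving $X_h = \phi_\Omega$ for some holomorphic $h$.  Expanding $\{h, x_k\} = \sum_i \lambda_{ik}\, x_i x_k\, \partial_i h$ and matching it with $\phi_\Omega(x_k) = \mu_k x_k$, this rewrites as the polynomial identity
$$
x_k \bigg( \sum_i \lambda_{ik}\, x_i\, \partial_i h - \mu_k \bigg) = 0 \qquad (0 \le k \le n-1).
$$
Since the ring of entire functions on $\bC^n$ is a domain, the factor in parentheses must vanish; evaluating it at the origin kills the sum and forces $\mu_k = 0$ for every $k$, i.e.\ $\lambda$ is normalized.

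I do not anticipate any serious obstacle: everything reduces to the divergence calculation above together with the trivial observation that a holomorphic function vanishing at the origin cannot equal a nonzero constant. The only point requiring a brief justification is the passage from an arbitrary nowhere-vanishing holomorphic volume form to an exponential gauge on $\bC^n$, which is immediate from simple connectedness.
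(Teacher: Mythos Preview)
Your argument is correct. The paper does not give a proof of this lemma at all; it simply states the result as well known. Your computation of the modular vector field $\phi_\Omega=\sum_k\mu_k x_k\partial_k$ with respect to the standard volume form, followed by the gauge change $\Omega\mapsto e^h\Omega$ and the evaluation at the origin to force each $\mu_k=0$, is exactly the standard verification one would expect here, and there is nothing to compare it against in the paper.
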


\section{Comparing Hochschild and Poisson cohomology}

\subsection{Relevant weights}
In what follows, we will be interested primarily in the piece of the Hochschild or Poisson cohomology that is homogeneous of weight zero with respect to the diagonal action of $\bkx \subset (\bkx)^n$, or equivalently the part corresponding to weights $\mathbf{w} = (w_0,\ldots,w_{n-1})$ such that $\sum_i w_i=0$.  Considering that $w_i \ge -1$ for any Hochschild or Poisson-contributing weight, we make the following definition.
\begin{definition}
A weight $\mathbf{w} \in \bZ^{n}$ is called \emph{relevant} if $w_i \geq -1$ for all $i$ and $\sum_i w_i = 0$.  
\end{definition}

\begin{lemma}
  There are finitely many relevant weights. Every weight of $\mathsf{HH}(\mathsf{A}_q)^{\bk^\times}$ and of $\mathsf{HP}(\mathsf{B}_\ps)^{\bk^\times}$ is relevant.
  \end{lemma}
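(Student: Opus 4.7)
The plan is to handle the two assertions separately, each essentially by inspection, combining the definition of ``relevant'' with the torus-weight computations already recorded.

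For finiteness, I will observe that the two defining conditions of a relevant weight force each coordinate to lie in a bounded range. Indeed, if $\mathbf{w}=(w_0,\ldots,w_{n-1})$ satisfies $w_i\geq -1$ for all $i$ and $\sum_i w_i = 0$, then for any fixed $j$,
\[
w_j \;=\; -\sum_{i\neq j} w_i \;\leq\; -\sum_{i\neq j}(-1) \;=\; n-1,
\]
so each $w_j$ takes values in the finite set $\{-1,0,1,\ldots,n-1\}$. Hence there are at most $(n+1)^n$ relevant weights, which gives finiteness immediately.

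For the second assertion, the diagonal $\bk^\times \subset (\bk^\times)^n$ acts on a $(\bk^\times)^n$-weight component of weight $\mathbf{w}$ by the character $t\mapsto t^{w_0+\cdots+w_{n-1}}$. Thus the $\bk^\times$-invariant part of either cohomology picks out exactly those weights $\mathbf{w}$ with $\sum_i w_i = 0$. On the other hand, \autoref{lm:HHtoric} tells us that $\mathsf{HH}(\mathsf{A}_q)^{\mathbf{w}}\neq 0$ forces $\mathbf{w}$ to be $q$-Hochschild-contributing, and \autoref{lm:HPtoric} tells us that $\mathsf{HP}(\mathsf{B}_\ps)^{\mathbf{w}}\neq 0$ forces $\mathbf{w}$ to be $\ps$-Poisson-contributing; in both cases condition (1) of Definitions \ref{def:HHcontributing} and \ref{def:HPcontributing} gives $w_i\geq -1$ for every $i$. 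Combining these two constraints yields precisely the definition of a relevant weight.

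There is no real obstacle here: the statement is essentially a bookkeeping consequence of the previously established weight descriptions of $\mathsf{HH}(\mathsf{A}_q)$ and $\mathsf{HP}(\mathsf{B}_\ps)$, together with the elementary observation that the hyperplane $\sum_i w_i = 0$ meets the shifted orthant $\{w_i \geq -1\}$ in a bounded, hence finite, set of lattice points. The only point worth stating carefully is the identification of the $\bk^\times$-invariant part with the vanishing of $\sum_i w_i$, which follows from the conventions fixed when the torus action was introduced.
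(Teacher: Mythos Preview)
Your proof is correct and follows essentially the same approach as the paper. The paper's finiteness argument is phrased slightly differently (fixing the set of coordinates equal to $-1$ and counting the nonnegative integer solutions summing to a fixed value), but this and your coordinate-wise bound $w_j\le n-1$ are equivalent elementary observations; your treatment of the second assertion is simply more explicit than the paper's, which omits it as immediate.
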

  \begin{proof} We only need to prove the first assertion.  But for any choice of $m$ components $i$ where $w_i=-1$, the remaining components are nonnegative integers summing to $m$, of which there are finitely many choices.
\end{proof}

\subsection{The entrywise exponential} \label{sec:ee}
As noted above, the conditions \eqref{eq:HHcontrib} and \eqref{eq:HPcontrib} for a weight to be Hochschild-contributing and Poisson-contributing, respectively,  are similar: \eqref{eq:HHcontrib} follows from \eqref{eq:HPcontrib} by taking $q_{ij} = e^{\ps_{ij}}$.  To make this rigorous, we need to work in a setting where the exponential map is defined.  Thus, we now take $\bk =\bC$, though of course other options are possible.

\begin{definition}
The \emph{entrywise exponential} is the holomorphic map
\[
\mapdef{\EE}{\askew{\bC}}{ \mskew{\bC}}
{(\ps_{ij})_{i,j}}{ (\exp(\ps_{ij}))_{i,j} }
\]
\end{definition}

 The following is then immediate from the definitions:
\begin{lemma}
Let $\ps \in \askew{\bC}$ be an additively alternating matrix, and let $q = \EE(\ps) \in \mskew{\bC}$ be its entrywise exponential.  Then
\begin{itemize}
\item If $\ps$ is normalized, so is $q$.
\item If a weight $\mathbf{w} \in \bZ^n$ is $\ps$-Poisson contributing, then it is also $q$-Hochschild contributing.
\end{itemize}
\end{lemma}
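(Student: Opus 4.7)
The statement is, as the authors remark, essentially a direct translation via the functional equation $\exp(a+b) = \exp(a)\exp(b)$ of the exponential. My plan is simply to unwind the definitions on both sides and apply this identity; there is no real obstacle to speak of, and the two bullets are independent.

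For the first bullet, I would start from the assumption that $\ps$ is normalized, i.e., $\sum_{j} \ps_{ij} = 0$ for every $i$. Exponentiating and using $q_{ij} = \exp(\ps_{ij})$, one gets
\[
\prod_{j=0}^{n-1} q_{ij} \;=\; \prod_{j=0}^{n-1} \exp(\ps_{ij}) \;=\; \exp\Bigl(\sum_{j=0}^{n-1} \ps_{ij}\Bigr) \;=\; \exp(0) \;=\; 1,
\]
for every $i$. This is precisely the normalization condition for $q$. (One should also note in passing that $q = \EE(\ps)$ does indeed lie in $\mskew{\bC}$: since $\ps_{ii}=0$ and $\ps_{ji} = -\ps_{ij}$, we get $q_{ii} = 1$ and $q_{ji} = q_{ij}^{-1}$ from the functional equation.)

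For the second bullet, I would assume $\mathbf{w}\in\bZ^n$ is $\ps$-Poisson-contributing and verify the two conditions of \autoref{def:HHcontributing} for $q = \EE(\ps)$. Condition (1), $w_i \geq -1$ for all $i$, is identical in \autoref{def:HPcontributing} and \autoref{def:HHcontributing}, so it carries over verbatim. For condition (2), fix any index $i$ with $w_i \geq 0$. By the Poisson-contributing hypothesis, $\sum_{j} \ps_{ij} w_j = 0$, and hence
\[
\prod_{j=0}^{n-1} q_{ij}^{w_j} \;=\; \prod_{j=0}^{n-1} \exp(\ps_{ij})^{w_j} \;=\; \exp\Bigl(\sum_{j=0}^{n-1} \ps_{ij} w_j\Bigr) \;=\; \exp(0) \;=\; 1,
\]
which is exactly the required identity \eqref{eq:HHcontrib}. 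Thus $\mathbf{w}$ is $q$-Hochschild-contributing, completing the proof. The whole argument is a one-line calculation per bullet, so there is no step that stands out as a genuine obstacle; the only thing to be careful about is that the exponentiation step uses $w_j \in \bZ$, which is fine.
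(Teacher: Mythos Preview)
Your proof is correct and is exactly the direct verification the paper has in mind when it says the lemma is ``immediate from the definitions''; the paper gives no further argument.
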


\subsection{Genericity}

In general, it can happen that there are more $q$-Hochschild-contributing weights than $\ps$-Poisson-contributing ones, even if we restrict our attention only to the relevant weights.  To avoid this, it suffices to require that  $\sum_j \ps_{ij} w_j \notin 2 \pi \sqrt{-1} \bZ\setminus\{0\} \subset \bC$ for every (relevant) weight  $\mathbf{w}$. It follows that,  for every alternating matrix $\ps$, 
$\EE(\hbar \ps)$-Hochschild-contributing implies $\hbar \ps$-Poisson-contributing for all but countably many values of $\hbar$. 

There is a second natural open condition on $\ps$ to consider, which has a geometric motivation.  Namely, the Poisson bivector $\sigma_\ps$  on $\sfV = \bk^n$ descends to a toric Poisson bivector on the projective space $\bP(\sfV) = \bP^{n-1}$.  This bivector on projective space is symplectic away from the coordinate hyperplanes (hence log symplectic) if and only if the matrix $\ps$ has rank $n-1$.

Combining these two natural conditions, we arrive at the following.
\begin{definition}\label{def:ageneric}
An additively alternating matrix $\ps \in \askew{\bC}$ is \emph{generic} if the following conditions hold:
\begin{enumerate}
\item\label{it:nondegen} $\ps$ has corank 1, i.e.~rank $n-1$.
\item\label{it:contrib} Every relevant $q$-Hochschild-contributing weight is also $\ps$-Poisson-contributing, where $q = \EE(\ps)$.
\end{enumerate}
\end{definition}

\begin{definition}\label{def:mgeneric}
A multiplicatively alternating matrix $q \in \mskew{\bC}$ is \emph{generic} if $q = \EE(\ps)$ for some generic additively alternating matrix $\ps \in \askew{\bC}$. 
\end{definition}

The locus of generic multiplicatively alternating matrices $q$ has the following structure.
\begin{theorem}\label{thm:good-q}
  The set $X \subseteq \mskew{\bC}$ of generic multiplicatively alternating matrices is Zariski open.  Moreover, we have a decomposition into Zariski locally closed subsets
  \[
  X = \bigsqcup_{S} X_S
  \]  
  indexed by sets $S$ of relevant weights that are Poisson-contributing for some generic additively alternating matrix, and   
  \[
    X_S := \{q \in \mskew{\bC} \mid S \text{ is the set of relevant }q\text{-Hochschild-contributing weights\}},
  \]
  A similar statement holds for the subset $X' \subset X$ of normalized matrices.
\end{theorem}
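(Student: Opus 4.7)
The plan is to stratify $\mskew{\bC}$ by the set of relevant $q$-Hochschild-contributing weights and identify $X$ with those strata whose index is realizable as the Poisson-contributing weights of some generic $\ps$. The proof proceeds in three main steps.

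First, I parameterize $\mskew{\bC} \cong (\bCx)^{\binom{n}{2}}$ via the free entries $q_{ij}$ for $i<j$. For each relevant weight $\mathbf{w}$ and each $i \in I(\mathbf{w})$, the Hochschild-contributing equation $\prod_j q_{ij}^{w_j} = 1$ becomes polynomial after clearing denominators (possible because $w_j \ge -1$), defining a closed subvariety $H_{\mathbf{w},i}$. Setting $H_\mathbf{w} := \bigcap_i H_{\mathbf{w},i}$ and $X_S := \bigcap_{\mathbf{w}\in S} H_\mathbf{w} \setminus \bigcup_{\mathbf{w}\notin S} H_\mathbf{w}$, the $X_S$'s form a finite stratification of $\mskew{\bC}$ by Zariski locally closed subsets, indexed by subsets $S$ of the (finite) set of relevant weights. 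Introducing the linear subspace $L_S \subseteq \askew{\bC}$ consisting of $\ps$ with $\ps\text{-PC} \supseteq S$, realizability of $S$ amounts to $L_S$ containing a corank-one element, since a generic $\ps \in L_S$ automatically has $\ps\text{-PC}$ exactly $S$. Because $L_T \subseteq L_S$ whenever $S \subseteq T$, the collection $\mathcal{S}$ of realizable $S$'s is closed under taking subsets.

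Next, I establish the identification $X = \bigsqcup_{S \in \mathcal{S}} X_S$. The easy direction: for $q = \EE(\ps)$ with $\ps$ generic, the genericity condition combined with the automatic containment $\ps\text{-PC} \subseteq q\text{-HC}$ forces $q\text{-HC} = \ps\text{-PC} =: S \in \mathcal{S}$, so $q \in X_S$. Conversely, given $q \in X_S$ with $S \in \mathcal{S}$, I must produce a generic lift $\ps \in L_S$ of $q$. Since $\EE$ restricts to a surjection $L_S \twoheadrightarrow H_S^\circ$ onto the identity component of $H_S$, this reduces to (a) verifying $X_S \subseteq H_S^\circ$, a connectedness/saturation statement derived from the structural fact that each nonzero relevant weight has a $-1$ entry (forced by $w_j \ge -1$ and $\sum_j w_j = 0$), which makes each individual Hochschild-contributing character primitive; and (b) adjusting the lift within its lattice coset in $L_S$ to be corank one, using that the corank-one locus in $L_S$ is Zariski dense since $S \in \mathcal{S}$.

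Zariski openness of $X$ then follows formally: by downward closure of $\mathcal{S}$, every point of $H_S$ for $S \notin \mathcal{S}$ lies in some $X_{S'}$ with $S' \supseteq S$ and $S' \notin \mathcal{S}$, so
\[
X^c = \bigsqcup_{S \notin \mathcal{S}} X_S = \bigcup_{S \notin \mathcal{S}} H_S
\]
is a finite union of closed subvarieties, hence closed. The normalized case is handled by intersecting everything with the closed condition $\prod_j q_{ij} = 1$. The hardest step will be (a) above: primitivity of individual characters alone does not imply saturation of their $\bZ$-span (intersections of primitive character kernels can be disconnected), so one must exploit the finer combinatorial structure of relevant weights to control the saturation of the character sublattice defining $H_S$.
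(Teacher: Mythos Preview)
Your overall architecture---stratify by the Hochschild-contributing set, lift through $\EE$ onto the linear subspace $L_S$, then translate within the lattice $L_S\cap\askew{\bZ}$ to hit the corank-one locus---is essentially the paper's. The lattice-translation step (your (b)) is exactly the paper's argument that $\varphi(Y_S\setminus Z)=\varphi(Y_S)$ via Zariski density of $2\pi\sqrt{-1}\Lambda$ in $L_S$.

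Two points deserve comment. First, your justification that ``a generic $\ps\in L_S$ automatically has $\ps$-PC exactly $S$'' is not correct as written: a Zariski-generic element of $L_S$ has Poisson-contributing set equal to the \emph{closure} $\bar S=\{\mathbf{w}:L_S\subseteq L_{\{\mathbf{w}\}}\}$, which can strictly contain $S$. This does not actually break your openness argument, because every set of the form $q$-HC is automatically closed in this sense (the multiplicative conditions for $\mathbf{w}$ are monomials in those for $S$ whenever the additive ones are linear combinations), so the non-closed $S$ contribute only empty strata; but the justification should be rewritten. The paper sidesteps this entirely: rather than expressing $X^c$ as a union of $H_S$'s, it observes that $X$ is constructible (a finite union of the locally closed $X_S$) and then checks \emph{analytic} openness directly, using that $\EE$ is a local biholomorphism and that the genericity conditions on $\ps$ are manifestly open in the classical topology. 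Constructible plus analytically open gives Zariski open. This is more robust than your route.

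Second, your step (a)---showing $X_S\subseteq H_S^\circ$, equivalently that every $q$ with $q$-HC $=S$ admits a lift to $L_S$---you correctly identify as the crux and honestly flag as unresolved. You should know that the paper does not prove this either: it is hidden in the bald assertion that $\varphi:\overline{Y_S}\to\overline{X_S}$ is surjective, which unwinds to exactly $\overline{X_S}=H_S^\circ$. So on this point you are not behind the paper; you have simply made explicit a step the paper passes over. Without it, both arguments yield only $X=\bigsqcup_{S\in\mathcal{S}}(X_S\cap H_S^\circ)$ rather than the stated decomposition.
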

This is a very explicit description, because each $X_S$ is just an intersection of hypersurfaces of the form $q^{\mathbf{w}}=1$, take away a union of other hypersurfaces of the same form. Moreover, there are only finitely many $S$ that can occur, since the set of relevant weights is finite. Remarkably, we do not need to impose any nondegeneracy condition on $q$ itself, merely a restriction on the set of contributing weights.
\begin{proof}[Proof of Theorem \ref{thm:good-q}]
  It is obvious that $X$ is contained in the union of the given sets $X_S$.  For the converse, given a set $S$ of relevant weights, let $Y_S \subseteq \askew{\bC}$ be the set of additively alternating matrices $\ps$ whose set of relevant $\ps$-Poisson-contributing weights is exactly $S$.
  
   Then we have a surjective  map $\varphi: \overline{Y_S} \to \overline{X_S}, \ps \mapsto \EE(\ps)$. Since every Poisson-contributing weight is Hochschild-contributing, this restricts to a map $\varphi: Y_S \to \overline{X_S}$ whose image contains $X_S$.  We just need to show that every point of $X_S$ is furthermore in the image of a matrix of rank $n-1$ lying in $Y_S$.  Let $Z \subseteq Y_S$ be the Zariski-closed locus of matrices of rank $<n-1$.  We wish to show that $X_S \subseteq \varphi(Y_S \setminus Z)$.  By assumption, $Y_S \setminus Z$ is nonempty. By definition, $Y_S$ is a linear subspace cut out by $\bZ$-linear equations. Therefore, $Y_S \cong \Lambda \otimes_\bZ \bC$ for some lattice $\Lambda$ in the set of alternating integer $n \times n$ matrices.  Now, $\varphi$ is unaffected by translation by $2 \pi \sqrt{-1} \Lambda$.  Since the latter is Zariski dense in $Y_S$, we get that $\bigcap_{z \in  \Lambda} (Z + 2 \pi \sqrt{-1} z) = \emptyset$.  As a result, $\varphi(Y_S \setminus Z) = \varphi(Y_S)$, which contains $X_S$.

  Finally, we show that $X$ is Zariski open.  Since it is a union of Zariski locally closed sets, it suffices to check that it is analytically open.  For this, now that given a generic element $\ps \in \askew{\bC}$ with $q=\EE(\ps)$, the map $\varphi$ is analytically locally an isomorphism from alternating matrices to the set $\mskew{\bC}$, so that deformations of $q$ in $\mskew{\bC}$ can automatically be obtained by deforming $\ps \in \askew{\bC}$.  Since the condition that $\ps$ has rank $n-1$ is open, so too is the condition of genericity of $q$. 
\end{proof}

\subsection{Biresidues and smoothing diagrams} \label{subsec:smoothingDiagrams}

Let $\ps$ be an additively alternating matrix. If $\ps$ is normalized, then the image of $\ps$ lies in the subspace
\[
\Delta := \set{ (z_0,...,z_{n-1})\in \bC^n}{z_0+...+z_{n-1}=0}
\]
Hence $\ps$ restricts to a linear map $\Delta \to \Delta$.  The following is straightforward.

\begin{lemma}\label{lm:biresidueMatrixDef}
If $\ps$ has rank $n-1$, then there is a unique normalized additively alternating matrix $b$ such that $b|_\Delta = \ps|_\Delta^{-1}$.  Moreover, this matrix is uniquely determined by the condition that the submatrix $(b_{ij})_{i,j=1}^{n-1}$ is the inverse of $(\widetilde{\ps}_{ij})_{i,j=1}^{n-1}$, where $\widetilde{\ps}_{ij} = \ps_{ij} + \ps_{j0} + \ps_{0i}$. 
\end{lemma}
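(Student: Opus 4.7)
The plan is to establish the lemma in three steps: invertibility of $\ps|_\Delta$, uniqueness of $b$, and then existence by directly showing that the submatrix formula produces a normalized alternating $b$ with $b|_\Delta=\ps|_\Delta^{-1}$.

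First I would check that $\ps|_\Delta$ is an isomorphism. Normalization gives $\ps\mathbf{1}=0$, so $\mathbb{C}\cdot\mathbf{1}\subseteq \ker \ps$, and equality holds since $\ps$ has corank one. Because $\ps$ is alternating, $\mathrm{Im}(\ps)=(\ker \ps)^\perp=\mathbf{1}^\perp=\Delta$ under the standard Euclidean pairing, and since $\mathbf{1}\notin\Delta$ the restriction $\ps|_\Delta\colon\Delta\to\Delta$ is bijective. The same observation applied to the alternating bilinear form $(u,v)\mapsto u^\top\ps v$ (whose radical is $\mathbb{C}\cdot\mathbf{1}$) shows that its restriction to $\Delta\times\Delta$ is non-degenerate; in the basis $f_i:=e_i-e_0$ of $\Delta$ the restricted form is represented by the matrix $(\widetilde{\ps}_{ij})_{i,j=1}^{n-1}$, which is therefore invertible.

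Uniqueness of $b$ under either characterization is now a dimension-count: a normalized alternating matrix automatically kills $\mathbf{1}$, and $\mathbb{C}^n=\Delta\oplus\mathbb{C}\cdot\mathbf{1}$, so it is determined by its restriction to $\Delta$. Likewise, a normalized alternating matrix is determined by its submatrix $(b_{ij})_{i,j=1}^{n-1}$, since $b_{00}=0$, $b_{0j}=-b_{j0}$, and $b_{i0}=-\sum_{k=1}^{n-1}b_{ik}$ recover the zeroth row and column.

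For existence, I would translate both characterizations into $(n-1)\times(n-1)$-matrix identities in the basis $\{f_i\}$, and show they are equivalent. A direct expansion of $\ps(f_j)=\ps_{\cdot j}-\ps_{\cdot 0}$ shows that the operator $\ps|_\Delta$ has matrix $A_{ij}=\ps_{ij}-\ps_{i0}$, and analogously $b|_\Delta$ has matrix $B_{ij}=b_{ij}-b_{i0}$. The bookkeeping that ties $A$ to $\widetilde{\ps}$ runs through the Gram matrix of the Euclidean pairing on $\Delta$, namely $H_{ij}=\langle f_i,f_j\rangle=\delta_{ij}+1$, i.e.~$H=I+\mathbf{1}\mathbf{1}^\top$: the identity $\widetilde{\ps}_{ij}=f_i^\top \ps f_j$ unpacks, using the column-sum condition from normalization, to $\widetilde{\ps}=HA$, and the analogous computation for $b$ using $b_{i0}=-\sum_{k\ge 1}b_{ik}$ gives $B=(b_{ij})_{i,j=1}^{n-1}\cdot H$. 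Combining these, $BA=(b_{ij})_{i,j=1}^{n-1}\cdot\widetilde{\ps}$, so the condition $b|_\Delta=\ps|_\Delta^{-1}$, i.e.~$BA=I$, is equivalent to $(b_{ij})_{i,j=1}^{n-1}$ being the inverse of $(\widetilde{\ps}_{ij})_{i,j=1}^{n-1}$. The latter is a well-defined antisymmetric matrix (as the inverse of an invertible antisymmetric one), and extending it via skew-symmetry and the normalization formulas above yields a normalized alternating $b$ with the required property, establishing simultaneously existence and the explicit description.

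The main point of care, in my view, is disentangling the two roles played by $\ps$ on $\Delta$---as a linear operator (represented by $A$) versus as a bilinear form (represented by $\widetilde{\ps}$)---and recognising the Euclidean Gram matrix $H$ as the bridge between them. Once that identification is in hand, the equivalence between the conceptual condition $b|_\Delta=\ps|_\Delta^{-1}$ and the concrete submatrix formula collapses to a one-line matrix computation.
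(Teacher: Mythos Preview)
Your proof is correct. The paper does not actually prove this lemma---it simply declares the statement ``straightforward'' and moves on---so there is no paper argument to compare against. Your write-up fills in the details cleanly: the identification of $\widetilde\ps$ and $A$ via the Gram matrix $H=I+\mathbf{1}\mathbf{1}^\top$ is the right bookkeeping device, and the computations $\widetilde\ps=HA$ and $B=(b_{ij})H$ check out line by line (using only normalization and skew-symmetry). The only implicit hypothesis you rely on that is not in the displayed statement is that $\ps$ is normalized; this is assumed in the paragraph immediately preceding the lemma, so your use of $\ps\mathbf{1}=0$ is legitimate.
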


The matrix $b$ in \autoref{lm:biresidueMatrixDef} is called the \textit{biresidue matrix} of $\ps$.  Geometrically, $b$ encodes the periods of the inverse log symplectic form on $\mathbb{P}^{n-1}$. It turns out that some aspects of the Poisson/Hochschild cohomology are easier to study via $b$, rather than $\ps$. The following proposition illustrates this:

\begin{proposition}\label{p:HHtoricLogSympl}
Let $\ps$ be a normalized alternating matrix of rank $n-1$.
Then a relevant weight is $\ps$-Poisson-contributing 
 if and only if $\mathbf{w}$ can be expressed as a linear combination of rows $b_{i\bullet}$ for which $w_i=-1$.  If, in addition, $\ps$ is generic, this is equivalent to $\mathbf{w}$ being $q$-Hochschild-contributing, for $q = \EE(\ps)$.
\end{proposition}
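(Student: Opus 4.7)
The plan is to recast both sides of the equivalence as linear-algebraic statements on $\bC^n$, exploiting the structural symmetry of $\ps$ and $b$. Let $I := \{i : w_i = -1\}$. Since $\ps$ and $b$ are both normalized and additively alternating, each of the matrices $\ps, b \colon \bC^n \to \bC^n$ has kernel $\bC\mathbf{1}$ and image $\Delta$; combined with $b|_\Delta = \ps|_\Delta^{-1}$, this yields $\ps b = b\ps = \pi_\Delta$, the projection onto $\Delta$ along $\bC\mathbf{1}$. By relevance, $\mathbf{w} \in \Delta$, so $\mathbf{w} = b\ps\mathbf{w}$. Also, by alternation of $b$, we have $b_{i\bullet}^T = -b e_i$, so ``$\mathbf{w}$ is a linear combination of rows $\{b_{i\bullet} : i \in I\}$'' is equivalent to ``$\mathbf{w} = b\mathbf{u}$ for some vector $\mathbf{u}$ supported on $I$''. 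Finally, the Poisson-contributing condition reads, for relevant $\mathbf{w}$, as ``$\ps\mathbf{w}$ is supported on $I$'' (as $\ps\mathbf{w} \in \Delta$ automatically).

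Given this setup, the forward direction is immediate: if $\mathbf{w}$ is Poisson-contributing, take $\mathbf{u} := \ps\mathbf{w}$, which is supported on $I$ by hypothesis and satisfies $b\mathbf{u} = \pi_\Delta \mathbf{w} = \mathbf{w}$. For the reverse direction, suppose $\mathbf{w} = b\mathbf{u}$ with $\mathbf{u}$ supported on $I$. Then $\ps\mathbf{w} = \pi_\Delta \mathbf{u} = \mathbf{u} - \tfrac{1}{n}\bigl(\textstyle\sum_i u_i\bigr)\mathbf{1}$, which is supported on $I$ precisely when $\sum_i u_i = 0$. To force this, I would use the constraint $w_i = -1$ for $i \in I$: setting $B := (b_{ik})_{i,k \in I}$ and writing $u$ for the restriction of $\mathbf{u}$ to $I$, these read $B u = -\mathbf{1}_I$. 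The key point is that $B$ inherits the alternating property from $b$, so $u^T B u = 0$ identically, which gives
\[
0 \;=\; u^T (B u) \;=\; u^T (-\mathbf{1}_I) \;=\; -\sum_{i \in I} u_i,
\]
exactly what is needed.

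The final assertion is then immediate from \autoref{def:ageneric}: the implication ``$\mathbf{w}$ is $q$-Hochschild-contributing $\Rightarrow$ $\mathbf{w}$ is $\ps$-Poisson-contributing'' is the defining condition of generic, while the reverse implication for $q = \EE(\ps)$ was recorded in the lemma following that definition.

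The technical heart of the argument is the reverse direction of the main equivalence: the deduction $\sum_i u_i = 0$ is driven by the skew-symmetry identity $u^T B u = 0$, which promotes the linear equation $B u = -\mathbf{1}_I$ to the desired orthogonality $u \perp \mathbf{1}_I$. Once this is in hand, the rest reduces to the bookkeeping of $\pi_\Delta$ already encoded in $\ps b = b\ps = \pi_\Delta$.
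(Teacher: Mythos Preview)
Your proof is correct and takes a genuinely different, more coordinate-free route than the paper's. The paper proceeds by fixing an index (say $w_0 \geq 0$), passing to the $(n-1)\times(n-1)$ auxiliary matrix $\widetilde{\ps}_{ij} = \ps_{ij}+\ps_{j0}+\ps_{0i}$, and using that its inverse is the truncated biresidue matrix; the argument then reduces to matrix multiplication in these truncated coordinates, with the reverse direction dispatched by ``reversing the implications.'' Your approach instead identifies both $\ps$ and $b$ as maps $\bC^n \to \bC^n$ with kernel $\bC\mathbf{1}$ and image $\Delta$, so that $\ps b = b\ps = \pi_\Delta$, and translates both conditions into statements about support. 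What you gain is a cleaner reverse direction: the skew-symmetry identity $u^T B u = 0$ applied to $Bu = -\mathbf{1}_I$ immediately forces $\sum_i u_i = 0$, which is exactly what is needed to conclude $\ps\mathbf{w} = \pi_\Delta\mathbf{u} = \mathbf{u}$ is supported on $I$. The paper's version avoids this trick at the cost of introducing a distinguished coordinate and the auxiliary $\widetilde{\ps}$; your version avoids those choices at the cost of needing the observation about $u^T B u$. Both are short, but yours isolates the structural reason for the equivalence more transparently.
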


\begin{proof}
By \autoref{lm:HPtoric}, $\mathsf{HP}(\mathsf{B}_\ps)^\mathbf{w}\not=0$  implies that $w_i\ge-1$ for each $i$, and 
$\sum_{j=0}^{n-1}\ps_{ij}w_j=0, \text{~~whenever~~} w_i\ge0$.
Without loss of generality, let us assume that $w_0\ge0$. Then we have

\begin{equation}\label{eq:wtcontributesHP}
\sum_{j=1}^{n-1}\widetilde{\ps}_{ij}w_j =\sum_{j=1}^{n-1}(\ps_{ij}+\ps_{j0}+\ps_{0i})w_j =0, \text{~~whenever~~} w_i\ge0.
\end{equation}

The condition \eqref{eq:wtcontributesHP} can be rephrased as $\widetilde{\mathbf{w}} \widetilde{\ps}    = \sum_{i:w_i=-1}\alpha_i\mathbf{e}_i$ for some $\alpha_i \in \bC$, where $\mathbf{e}_i$ denotes the standard basis vector, $\widetilde{\mathbf{w}}=(w_1,...,w_{n-1})$ is a truncation of $\mathbf{w}$, and $\widetilde{\ps} = (\widetilde{\ps}_{ij})_{i,j=1}^{n-1}$ is the matrix from \autoref{lm:biresidueMatrixDef}.
Multiplying this equality by $(b_{ij})_{i,j=1}^{n-1}$ on the right, we obtain $\widetilde{\mathbf{w}}=\sum_{i:w_i=-1} \alpha_i \widetilde{b}_{i\bullet}$, where $\widetilde{b}_{i\bullet}=(b_{i1},...,b_{i,n-1})$. It remains to note that $\mathbf{w}$ and each row of $b$ have zero sum. Therefore, we have $\mathbf{w}=\sum_{i:w_i=-1} \alpha_i {b}_{i\bullet}$. This proves necessity. Sufficiency is proved in a similar way, by reversing all the implications above.
\end{proof}

In low degrees, we have the following possibilities for the contributing weights:
\begin{itemize}
\item $\HP^0(\sfB_\ps)^{\bCx}$ (the center): the only contributing weight is the zero weight $\mathbf{0}$.
\item $\HP^1(\sfB_\ps)^{\bCx}$ (infinitesimal symmetries):  again, the only contributing weight is the zero weight $\mathbf{0}$. Indeed, a weight with exactly one $-1$ at the $i$-th spot cannot be a multiple of $b_{i\bullet}$, because $b_{ii}=0$.
\item $\HP^2(\sfB_\ps)^{\bCx}$ (infinitesimal deformations): the contributing weights are of two types -- the zero weight $\mathbf{0}$ and contributing weights with exactly two $-1$. We are going to call the latter weights smoothable. Concretely, a weight $\mathbf{w}$ is \textit{smoothable} if there are two indices $i,j$ such that $w_i=w_j=-1$, $b_{ij}\not=0$, and 
\begin{equation}
\frac{b_{jk}+b_{ki}}{b_{ij}} = w_k \in\mathbb{Z}_{\ge0}, \text{~~~for } k\not=i,j.
\end{equation}
The terminology is justified by the fact the the corresponding first order deformation of the log symplectic form on $\mathbb{P}^{n-1}$ is smoothing out the singularity $\{x_i=x_j=0\}$ of the polar divisor.
\item $\HP^3(\sfB_\ps)^{\bCx}$ (obstructions to deformations): the contributing weights are of three types, namely the zero weight $\mathbf{0}$, the smoothable weights and the contributing weights with exactly three $-1$s. The last type of weights are called obstructed. Concretely, a weight $\mathbf{w}$ is \textit{obstructed} if there are three indices $i,j,k$ such that $w_i=w_j=w_k=-1$, and
$w_\ell\ge0$ for $\ell\not=i,j,k$, and $\mathbf{w}$ is a linear combination of the rows $b_{i\bullet}, b_{j\bullet}, b_{k\bullet}$.  
\end{itemize}

Following \cite{Matviichuk2020},  we are going to encode the information about $\mathsf{HH}^2(\mathsf{A}_q)$ using a \textit{smoothing diagram}, which is defined as a graph with some edges and angles colored according to the following recipe. In a complete graph on $n$ vertices, we color an edge $i\edge j$, and call it smoothable, if there is a smoothable weight $\boldsymbol{\theta}$ with $\theta_i=\theta_j=-1$. Note that, for each $i \neq j$, there can be at most one such $\boldsymbol{\theta}$. Additionally, we color each angle $i\edge k\edge j$
such that $i\edge j$ is smoothable and the $k$-th entry of the corresponding (uniquely defined) smoothable weight is non-zero. Note that one can associate a smoothing diagram to any alternating matrix $b$: a smoothable edge means that a (unique) linear combination of rows $i$ and $j$ has $-1$ in both the $i$-th and $j$-th component, and nonnegative integers in all other components.
 In the case at hand, each row of $b$ sums to zero.
 In this case, for a smoothable edge $i\edge j$, all entries of the corresponding
 smoothable weight $\boldsymbol{\theta}$ other than the $i$-th and $j$-th entries are zero, except either one $2$, or two $1$'s. Having this in mind, whenever $\boldsymbol{\theta}$ is a smoothable weight with $\theta_i=\theta_j=-1$, we are going to use dark coloring for the angle $i\edge k\edge j$, if $\theta_{k}=2$, and light coloring if $\theta_k=1$; see \autoref{fig:colored_angles_types}.

\begin{figure}[h]
\centering
\begin{subfigure}[t]{0.35\textwidth}\hspace{0.5cm}
\includegraphics[scale=1.0]{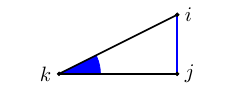}
\iffalse
\begin{tikzpicture}[scale = 1, baseline = -1.5]
\coordinate (a) at (1,0);
\coordinate (b) at (3,1);
\coordinate (c) at (3,0);

\clip (-0,-0.5) rectangle (4,1.25);
\begin{scope}
			\clip (a) -- (c) -- (b);
\draw [draw,opacity=1.1,fill,blue] (a) circle (0.7) ;
\end{scope}

\draw[ thick]  (a) node[left]{$k$} --  (b) node[right] { $i$};
\draw[ thick]  (a) --  (c) node[ right] { $j$};
\draw[thick, blue] (b)--(c);

\draw[fill] (a) circle (0.03);
\draw[fill] (b) circle (0.03);
\draw[fill] (c) circle (0.03);
\end{tikzpicture}
\fi
\caption{Darkly colored angle, $\theta_k=2$}
\end{subfigure}
\hskip1cm
\begin{subfigure}[t]{0.35\textwidth}\hspace{0.5cm}
\includegraphics[scale=1.0]{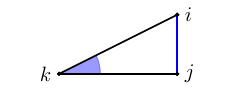}
\iffalse
\begin{tikzpicture}[scale = 1, baseline = -1.5]
\coordinate (a) at (1,0);
\coordinate (b) at (3,1);
\coordinate (c) at (3,0);

\clip (-0,-0.5) rectangle (4,1.25);
\begin{scope}
			\clip (a) -- (c) -- (b);
\draw [draw,opacity=0.4,fill,blue] (a) circle (0.7) ;
\end{scope}

\draw[ thick]  (a) node[left]{$k$} --  (b) node[right] { $i$};
\draw[ thick]  (a) --  (c) node[ right] { $j$};
\draw[thick, blue] (b)--(c);

\draw[fill] (a) circle (0.03);
\draw[fill] (b) circle (0.03);
\draw[fill] (c) circle (0.03);
\end{tikzpicture}
\fi
\caption{Lightly colored angle, $\theta_k=1$}
\end{subfigure}

\caption{Two types of colored angles}
\label{fig:colored_angles_types}
\end{figure}

\begin{example}\label{ex:running1}
The following matrix $b$ has the smoothing diagram drawn on the right hand side:
$$
b= 
\begin{pmatrix}
0 &2& -4& -4& 6\\
-2& 0& 3& 1& -2 \\
4& -3& 0& 1& -2 \\
4& -1& -1& 0& -2 \\
-6& 2& 2& 2& 0
\end{pmatrix} \hskip0.2\textwidth
\raisebox{-1.5cm}{\includegraphics[scale=1.0]{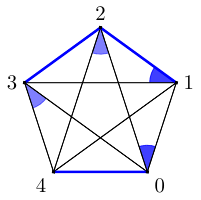}}
\iffalse
\pgfdeclarelayer{background layer}
\pgfdeclarelayer{foreground layer}
\pgfsetlayers{background layer,main,foreground layer}
\begin{tikzpicture}[baseline=-1ex,
rotate=360/2/5,
scale=0.9,line join = round 
    ] 
    
    \begin{pgfonlayer}{main}
\foreach \n in {1,...,5}
{
    \coordinate (v\n) at ({-90+((\n-1)*360/5)}:1.5);
}
\node[below right] at (v1) {$0$};
\node[right] at (v2) {$1$};
\node[above] at (v3) {$2$};
\node[left] at (v4) {$3$};
\node[below left] at (v5) {$4$};

\foreach \n in {1,...,5}
{
	\foreach \m in {1,...,5}
	\draw (v\n) -- (v\m);
}
\end{pgfonlayer}

\smoothedge{v2}{v3}{v1}{v1}
\smoothedge{v3}{v4}{v2}{v2}
\smoothedge{v5}{v1}{v3}{v4}

\begin{pgfonlayer}{foreground layer}
\foreach \n in {1,...,5}
{
	\draw[fill] (v\n) circle (0.03);
}
\end{pgfonlayer}
 \end{tikzpicture}
\fi
$$
Consequently, the corresponding log symplectic Poisson structure on $\bP^{4}$ admits a deformation whose degeneracy divisor is obtained by smoothing the intersections of the pairs of hyperplanes $(x_1,x_2)$, $(x_2,x_3)$ and $(x_0,x_4)$, resulting in a irreducible cubic hypersurface (deforming $x_1x_2x_3=0$) and an irreducible quadric hypersurface (deforming $x_0x_4=0$).
\end{example}

The following basic combinatorial constraint on smooth diagrams will play an important role later in the paper.
\begin{lemma}[\cite{Matviichuk2020}]\label{lm:pos_rat_valency}
Let $b=(b_{ij})_{i,j=0}^{n-1}$ be an alternating matrix. Then its smoothing diagram has the following properties.
\begin{enumerate}
\item For each vertex $j$, there are at most two smoothable edges containing $j$.  Hence the smoothing diagram is a disjoint union of chains and cycles of edges.
\item If $i\edge j$ and $j\edge k$ are two distinct smoothable edges, then $b_{ij}$ is a positive rational multiple of $b_{jk}$.
\end{enumerate}
\end{lemma}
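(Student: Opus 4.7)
The plan is to establish (2) first by direct computation with the explicit formula for the smoothable weight, and then to deduce (1) from (2) by a short multiplicative sign argument.

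For (2), recall that the smoothable weight $\boldsymbol{\theta}$ attached to an edge $u\edge v$ is the unique element in the $\bk$-linear span of the rows $b_{u\bullet}, b_{v\bullet}$ having $\theta_u=\theta_v=-1$, and (for it to be smoothable) satisfying $\theta_w \in \bZ_{\ge 0}$ for $w \neq u,v$, where one computes
\[
\theta_w \;=\; \frac{b_{vw} + b_{wu}}{b_{uv}}.
\]
Applying this with the two edges $i\edge j$ and $j\edge k$, and writing $\alpha=b_{ij}$, $\beta=b_{jk}$, $\gamma=b_{ki}$, one obtains
\[
\theta_k \;=\; \frac{\beta+\gamma}{\alpha}, \qquad \theta'_i \;=\; \frac{\gamma+\alpha}{\beta},
\]
both of which lie in $\bZ_{\geq 0}$. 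Eliminating $\gamma$ from these two equations yields
\[
\alpha(1+\theta_k) \;=\; \beta(1+\theta'_i),
\]
so $b_{jk}/b_{ij}=(1+\theta_k)/(1+\theta'_i)$ is a ratio of positive integers, which is the content of (2).

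Now for (1), suppose for contradiction that some vertex $j$ is incident to three distinct smoothable edges $\{j,i_a\}$, $a=1,2,3$. For each pair $a\neq b$, I orient them as $i_a\edge j$ and $j\edge i_b$ and apply (2): since $b_{i_a j} = -b_{j i_a}$, the ratio $b_{j i_b}/b_{j i_a}$ is forced to be a \emph{negative} rational. But if $b_{j i_2}/b_{j i_1}<0$ and $b_{j i_3}/b_{j i_2}<0$, then their product $b_{j i_3}/b_{j i_1}>0$, contradicting the requirement that it also be negative. Hence at most two smoothable edges meet at $j$, and the smoothing diagram decomposes as a disjoint union of chains and cycles.

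The only mildly delicate step is to keep careful track of the sign flip that occurs when the shared vertex $j$ plays the role of ``second endpoint'' of one edge and ``first endpoint'' of the other; this is precisely what converts the positive ratio in (2) into the negative ratio used in (1). Once that sign bookkeeping is handled, both parts reduce to solving two linear equations in a single unknown $\gamma$.
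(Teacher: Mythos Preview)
Your proof is correct and follows essentially the same approach as the paper: you establish (2) first via the identity $\alpha(1+\theta_k)=\beta(1+\theta'_i)$ obtained by eliminating $\gamma$ from the two smoothability constraints, and then deduce (1) by the same multiplicative sign argument, observing that three edges through $j$ would force pairwise negative ratios among $b_{j i_1}, b_{j i_2}, b_{j i_3}$, which is impossible. The only differences are notational (you use $\alpha,\beta,\gamma$ for the $b$-entries where the paper uses $\alpha,\beta$ for the integer weights), but the logic is identical.
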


\begin{proof}
Let us prove the second statement first. Suppose that
$$
\dfrac{b_{jk}+b_{ki}}{b_{ij}}=\alpha\in\mathbb{Z}_{\ge0} ~~~~\text{and} ~~~~\dfrac{b_{ki}+b_{ij}}{b_{jk}}=\beta\in\mathbb{Z}_{\ge0},
$$
By subtracting two equalities $b_{jk}+b_{ki} = \alpha~b_{ij}$ and $b_{ki}+b_{ij} = \beta~b_{jk}$, we obtain $(1+\beta) ~b_{jk} = (1+\alpha) ~b_{ij}$.

To prove the first statement, let us assume that $j\edge i$, $j\edge \ell$, and $j\edge k$ are pairwise distinct smoothable edges. Then, by the second statement, $b_{ji}$ and $b_{j\ell}$ are negative multiples of each other, and so are $b_{j\ell}$ and $b_{jk}$. However, those two facts together imply that $b_{ji}$ and $b_{jk}$ are positive multiples of each other, which contradicts the second statement.
\end{proof}

\section{Algebra deformations from smoothing diagrams}

Throughout this section, $\ps$ is a generic normalized alternating matrix, and $q = \EE(\ps)$.  Thus the Poisson and Hochschild cohomologies agree: $\HH(\sfA_q)^{\bCx} \cong \HP(\sfB_\ps)^{\bCx}$.  Consequently, the deformations of $\sfA_q$ can also be described in terms of smoothing diagrams, mirroring the corresponding Poisson deformations.  We now give a detailed description of this procedure, starting by explaining the contribution of each smoothable edge, then combining them into chains and cycles.

\subsection{Deformations from smoothable edges}

Let $i\edge j$, $i<j$, be a smoothable edge, with colored angles $i\edge k \edge j$ and $i\edge \ell \edge j$, where $k\le \ell$.  Let $R= \bC[\varepsilon]$, and define an $R$-algebra $\widetilde \sfA$ as the quotient of the free algebra $R\abrac{x_0,\ldots,x_{n-1}}$ by the relations 
$$
x_b x_a = \begin{cases}
q_{ji} x_i x_j +\varepsilon x_k x_\ell & a =i \textrm{ and }b= j \\
q_{ba} x_a x_b & \textrm{otherwise}
\end{cases}
$$
for all $b > a$, i.e.~we deform the relation $x_jx_i = q_{ji}x_ix_j$ to $x_jx_i = q_{ji}x_ix_j + \varepsilon x_kx_l$, leaving the other relations alone.    Clearly $\widetilde\sfA/\varepsilon\widetilde \sfA \cong \sfA_q$, and moreover one can check that the monomials $x_{i_1}...x_{i_p}$, $i_1\le ... \le i_p$ form an $R$-basis for $\widetilde \sfA$.  (This can be deduced from Bergman's Diamond Lemma \cite{Bergman1978}. It also follows from \autoref{prop:deformedKoszulNoCycle} below, so we omit the proof.) Thus $\widetilde \sfA$ is a flat $R$-deformation of $\sfA_q$.

\begin{example}\label{ex:running2}
Let $\mathsf{A}_q$ be the non-commutative algebra corresponding to the biresidue matrix $b$ from \autoref{ex:running1}. The matrix $\ps=(\ps_{ij})_{i,j=0}^{n-1}$ of the Poisson bracket coefficients is given by
$$
\ps = \dfrac{1}{30} 
\begin{pmatrix}
0 & 1 & -1 & 3 & -3 \\
-1 & 0 & -12 & 12 & 1 \\
1 & 12 & 0 & -6 & -7 \\
-3 & -12& 6 & 0 & 9 \\
3 & -1 & 7 & -9 & 0
\end{pmatrix}.
$$
Let $q = \EE(\ps)$, and let $v = e^{\frac{1}{30}}$. We then have the following relations for $\mathsf{A}_q$:
\begin{align*}
x_1 x_0 &= v^{-1} x_0 x_1 &  x_2 x_0 &= v x_0 x_2 & x_3 x_0 &= v^{-3} x_0 x_3 & x_4 x_0 &= v^3 x_0 x_4 \\
x_2 x_1 &= v^{12} x_1x_2 & x_3 x_1 &= v^{-12} x_1 x_3 & x_4x_1 &= v^{-1} x_1 x_4 \\
x_3 x_2 &= v^6 x_2 x_3 & x_4 x_2 &= v^7 x_2 x_4 \\ x_4 x_3 &= v^{-9}x_3x_4.
\end{align*}

The smoothing diagram for $b$ has three smoothable edges: $1\edge 2$, $2\edge 3$ and $0\edge 4$. The corresponding smoothable weights are $\boldsymbol{\theta}_{12} = (2,-1,-1,0,0)$, $\boldsymbol{\theta}_{23} = (0,2,-1,-1,0)$ and $\boldsymbol{\theta}_{04}=(-1,0,1,1,-1)$. These give three different ways to deform $\mathsf{A}_q$. For the first we deform the relation $x_2x_1=v^{12} x_1 x_2$ to $x_2x_1=v^{12} x_1 x_2 + \varepsilon x_0^2$. For the second, we deform $x_3x_2 = v^6 x_2 x_3$ to $x_3 x_2 = v^6 x_2 x_3 + \varepsilon x_1^2$. For the third, we deform $x_4 x_0 = v^3 x_0 x_4$ to $x_4 x_0 = v^3 x_0 x_4 + \varepsilon  x_2x_3$.
\end{example}

Sometimes the deformations coming from different smoothable edges can be combined in an obvious way. For instance, in \autoref{ex:running2} the first two deformations can be combined by deforming both relations $x_2x_1=v^{12} x_1 x_2$ and $x_3x_2 = v^6 x_2 x_3$ as described above. One can check that the obtained deformation of $\mathsf{A}_q$ still has the PBW basis $x_{i_1}...x_{i_p}$, $i_1\le ... \le i_p$. However, if we try to combine, say, the last two deformations in the same obvious way, we run into trouble: the two following ways of reducing the word $x_3 x_4 x_0$ produce different results.  On the one hand,
$$
x_4 x_3 x_0  =v^{-9} x_3 x_4 x_0 = v^{-9} x_3 (v^3 x_0 x_4 + \varepsilon x_2 x_3) = v^{-9} x_0 x_3 x_4 + \varepsilon v^{-3} x_2 x_3^2 + \varepsilon^2 v^{-9}x_1^2 x_3,
$$
and on the other hand,
$$
x_4x_3 x_0 = x_4 v^{-3} x_0 x_3 = (v^3 x_0 x_4 + \varepsilon x_2 x_3) v^{-3} x_3 = v^{-9} x_0 x_3 x_4 + \varepsilon v^{-3} x_2 x_3^2.
$$
Hence the resulting deformation is not flat.  We now explain how to fix this issue.

\subsection{Cycle-free deformations} \label{sec:nocycles}

In this subsection, we fix a biresidue matrix $b = (b_{ij})^{n-1}_{i,j=0}$, and a collection of smoothable edges that does not contain any cycles.
Then by permuting the indices if necessary, we may assume that the edges in the collection have the form
$i\edge (i+1)$, $i\in I$ for some indexing set $I \subset \{0,\ldots,n-2\}$.

We will prove that the deformations corresponding to all of the edges in this collection can be combined in an essentially unique way, by showing that the obstructions to such deformations vanish by torus weight considerations.  To do so, we need some linear algebraic statements that constrain the possible torus weights.  First we treat the weights that occur in the possible deformations:

\begin{lemma}\label{lm:smoothableWeightLinIndep}
For a subset $I \subset \{0,\ldots,n-2\}$ indexing a cycle-free collection of smoothable edges of $b$ as above, the smoothable weights $\boldsymbol{\theta}_i$, $i\in I$, are linearly independent over $\mathbb{C}$.
\end{lemma}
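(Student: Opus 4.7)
The plan is to write each smoothable weight explicitly as a combination of rows of the biresidue matrix $b$ and then reduce linear independence to the identification $\ker b = \bC\cdot\mathbf{1}$. By \autoref{p:HHtoricLogSympl}, the smoothable weight attached to an edge $u\edge v$ lies in the span of $b_{u\bullet}$ and $b_{v\bullet}$, and imposing the defining conditions $(\boldsymbol{\theta}_{uv})_u = (\boldsymbol{\theta}_{uv})_v = -1$ (using $b_{uu}=b_{vv}=0$) pins it down as
\[
\boldsymbol{\theta}_{uv} = \frac{1}{b_{uv}}\bigl(b_{v\bullet} - b_{u\bullet}\bigr).
\]

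By \autoref{lm:pos_rat_valency} and the cycle-free hypothesis, the chosen set $I$ of edges splits into a disjoint union of maximal chains. I will label the vertices of such a chain in order as $v_0, v_1, \ldots, v_m$, with smoothable edges $v_{j-1}\edge v_j$, and suppose $\sum c_j \boldsymbol{\theta}_j = 0$ summed over all selected edges (across all chains). Substituting the formula above and setting $d_j := c_j/b_{v_{j-1}v_j}$ for the edge at position $j$ in a chain, each chain contributes a telescoping sum $\sum_{k=0}^{m} e_k\,b_{v_k\bullet}$ with $e_0 = -d_1$, $e_m = d_m$, $e_k = d_k - d_{k+1}$ in between, and $\sum_k e_k = 0$ automatically.

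Next I will assemble all the $e_k$ (across chains) into a single vector $\mathbf{e} \in \bC^n$, placing the corresponding $e_k$ at each chain-vertex and zero elsewhere. The total relation becomes $\mathbf{e}^T b = 0$, i.e.\ $\mathbf{e} \in \ker b$ by skew-symmetry of $b$. From \autoref{lm:biresidueMatrixDef} together with the genericity of $\ps$, the matrix $b$ is normalized and of corank $1$; since $b\mathbf{1}=0$, this forces $\ker b = \bC\cdot\mathbf{1}$. Hence $\mathbf{e} = \alpha\mathbf{1}$ for some $\alpha \in \bC$, and applying $\sum_k e_k = 0$ to any single chain yields $(m+1)\alpha = 0$, so $\alpha = 0$. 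With $\mathbf{e} = 0$, the telescoping relations $-d_1 = 0$, $d_1 = d_2$, \ldots successively yield $d_j = 0$ in every chain, so all $c_j = 0$.

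The main obstacle I anticipate is the bookkeeping across multiple chains---verifying that the individual telescoping sums really do assemble into a single vector $\mathbf{e}$ that witnesses an element of $\ker b$---together with the careful invocation of the normalization and corank-$1$ properties of $b$ via \autoref{lm:biresidueMatrixDef}. Once these are in hand, the argument hinges entirely on the rank deficiency of $b$, which is precisely where the genericity hypothesis on $\ps$ enters decisively.
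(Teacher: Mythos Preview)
Your proof is correct and follows essentially the same approach as the paper's: express each $\boldsymbol{\theta}_i$ as $\tfrac{1}{b_{i,i+1}}(b_{i+1,\bullet}-b_{i,\bullet})$, collect a linear dependence into a sum-zero relation among the rows of $b$, use that $\ker b = \bC\cdot\mathbf{1}$ (normalized, corank one) to force all coefficients equal and hence zero, then unwind the telescoping to get each $c_j=0$. The paper's version is slightly more streamlined because it has already reindexed so that the edges are $i\edge(i+1)$, avoiding the need to track separate chains explicitly, but the argument is the same.
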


\begin{proof}
For $i\in I$, let $$\boldsymbol{\theta}_{i} = \frac{1}{b_{i,i+1}}\Big(-b_{i,\bullet}+b_{i+1,\bullet}\Big)$$ be the smoothable weight corresponding to $i\edge (i+1)$. Assuming a $\mathbb{C}$-linear combination $\sum_{i\in I} \nu_i \boldsymbol{\theta}_{i}$ is zero, we get $\sum_{k=0}^{n-1}\mu_k b_{k,\bullet} = \mathbf{0}$, where the constants $\mu_0, ..., \mu_{n-1}$ sum to zero. Since the alternating matrix $b$ has corank $1$, there is essentially only one non-trivial zero linear combination of its rows, namely $\sum_{k=0}^{n-1} b_{k,\bullet}=\mathbf{0}$. We therefore have $\mu_0=...=\mu_{n-1}$, which together with $\mu_0+...+\mu_{n-1}=0$ implies that all $\mu$s are zero. By induction on $i$, this implies that each $\nu_i$ is zero.
\end{proof}

Next we treat the weights of the possible obstructions. The proof of the following statement is elementary but long, so we delay it to the end of this subsection.
\begin{lemma}\label{lm:avoidingObstructedWeights}
For a subset $I \subset \{0,\ldots,n-2\}$ indexing a cycle-free collection of smoothable edges of $b$ as above, no non-negative  integer linear combination of the smoothable weights $\boldsymbol{\theta}_i$, $i\in I$ is obstructed.
\end{lemma}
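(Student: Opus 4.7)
The plan is to argue by contradiction.  Suppose $\mathbf{w} = \sum_{i \in I} \nu_i \boldsymbol{\theta}_i$ with $\nu_i \in \bZ_{\geq 0}$ is obstructed, with $-1$ entries at a triple $P = \{p_1,p_2,p_3\}$, non-negative entries elsewhere, and $\mathbf{w}$ lying in the span of $b_{p_1,\bullet}, b_{p_2,\bullet}, b_{p_3,\bullet}$ modulo the row-sum-zero relation $\sum_\ell b_{\ell,\bullet} = 0$.  The very first observation is that each $\boldsymbol{\theta}_i$ has $-1$ entries only at positions $i$ and $i+1$ and is non-negative at all other positions, so the non-negative combination $\mathbf{w}$ inherits this property outside the flow support $V(F) := \bigcup_{i : \nu_i > 0}\{i,i+1\}$.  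In particular $P \subseteq V(F)$, so the three $-1$'s all occur at chain vertices at which flow actually happens.

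Next, I would re-expand $\mathbf{w} = \sum_\ell \mu_\ell b_{\ell,\bullet}$ using $\boldsymbol{\theta}_i = (b_{i+1,\bullet}-b_{i,\bullet})/b_{i,i+1}$.  A direct calculation identifies $\mu_\ell$ with the divergence $f_{\ell-1}-f_\ell$ of the scaled flow $f_i := \nu_i/b_{i,i+1}$ at $\ell$ (with boundary convention $f=0$ at chain endpoints), and $\mu_\ell = 0$ for $\ell$ outside every chain.  The obstructedness condition then forces $\mu_\ell$ to equal a common constant $c$ for all $\ell \notin P$; since $\mu_\ell = 0$ for isolated vertices, the existence of any vertex outside both the chains and $P$ forces $c=0$.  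Treating this as the principal case, the flow must be piecewise constant on each chain, with jumps only at $P$-vertices and zero at chain endpoints outside $P$.  Combined with $P \subseteq V(F)$ and the telescoping identity $\sum_{i \in [\ell,\ell'-1]} b_{i,i+1}\boldsymbol{\theta}_i = b_{\ell',\bullet}-b_{\ell,\bullet}$, this rules out configurations in which the flow is confined to a single segment of a chain (which would give $\mathbf{w}$ only two $-1$ entries).  Hence all of $P$ must lie in a single chain, and the flow must be strictly positive on each of the two consecutive segments $[p_1,p_2-1]$ and $[p_2,p_3-1]$, with constant values $\alpha_1,\alpha_2 > 0$.

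Under this configuration, $\mathbf{w}|_{\text{chain}} = \alpha_1(b_{p_2,\bullet}-b_{p_1,\bullet}) + \alpha_2(b_{p_3,\bullet}-b_{p_2,\bullet})$, and the three scalar equations $\mathbf{w}_{p_j}=-1$ form an overdetermined $3 \times 2$ system in $(\alpha_1,\alpha_2)$.  Subtracting pairs of equations forces the algebraic identity
\[
  b_{p_1,p_3} = b_{p_1,p_2} + b_{p_2,p_3}.
\]
The main obstacle -- and the most delicate part of the argument -- is to derive a contradiction from this identity.  The plan is to exploit a second telescoping along the chain edges $v_0 = p_1, v_1, \ldots, v_a = p_2, \ldots, v_{a+b} = p_3$: the identity shows that $\mathbf{w}$ is proportional to the vector $(b_{p_3,\bullet}-b_{p_1,\bullet})/b_{p_1,p_3}$, which is a positive-coefficient combination of the chain-edge smoothable weights $\boldsymbol{\theta}_i$ for $i \in [p_1,p_3-1]$.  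Each such $\boldsymbol{\theta}_i$ has $-1$'s only at $v_i,v_{i+1}$, so the $-1$ at the interior chain vertex $p_2$ can only be produced by the two chain edges incident to $p_2$; imposing $\mathbf{w}_{v_k} \geq 0$ at every other intermediate chain vertex $v_k$ yields a system of integrality and positivity inequalities on the colored-angle contributions which, using the positive rational proportionality of consecutive chain $b$-values guaranteed by \autoref{lm:pos_rat_valency}(2), can be shown to be inconsistent.

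Finally, the case $c \neq 0$ -- only possible when every vertex outside the chains lies in $P$, so at most three ``isolated'' vertices exist -- together with the sub-cases where some $P$-vertices sit at chain endpoints, reduces to variants of the above analysis, the same identity $b_{p_1,p_3}=b_{p_1,p_2}+b_{p_2,p_3}$ emerging and being ruled out in an analogous way.
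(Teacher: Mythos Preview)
Your overall architecture matches the paper's: argue by contradiction, show that the three indices $p_1<p_2<p_3$ must lie on a single chain with all intermediate edges smoothable, express $\mathbf{w}=\alpha_1(b_{p_2,\bullet}-b_{p_1,\bullet})+\alpha_2(b_{p_3,\bullet}-b_{p_2,\bullet})$ with $\alpha_1,\alpha_2>0$, and derive the key algebraic relation $b_{p_1,p_2}+b_{p_2,p_3}+b_{p_3,p_1}=0$.  (Your case split on the constant $c$ is avoidable: the paper simply writes the obstructed expression as $-\mu_1 b_{p_1,\bullet}+\mu_2 b_{p_2,\bullet}+\mu_3 b_{p_3,\bullet}$ from the outset, proves $-\mu_1+\mu_2+\mu_3=0$ directly from the $3\times 3$ alternating submatrix, and then matches coefficients with the $\boldsymbol\theta_i$-expansion using linear independence.)

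The genuine gap is your final contradiction.  Your claim that the identity makes $\mathbf{w}$ proportional to $(b_{p_3,\bullet}-b_{p_1,\bullet})/b_{p_1,p_3}$ is wrong unless $\alpha_1=\alpha_2$, which you have not established; so the subsequent plan to impose $w_{v_k}\ge 0$ at intermediate chain vertices on this single vector does not get off the ground, and ``can be shown to be inconsistent'' is not a proof.  The paper's endgame is different and short: normalizing so that all $b_{i,i+1}>0$ along the chain, smoothability of every edge from $p_1$ to $p_3$ yields a triangular array of inequalities among the $b_{i,j}$ which in particular gives
\[
b_{p_1,p_3}\le \min\bigl(b_{p_1,p_2},\,b_{p_2,p_3}\bigr).
\]
Combining this with $b_{p_1,p_2}+b_{p_2,p_3}=b_{p_1,p_3}$ forces $b_{p_1,p_2}\le 0$ and $b_{p_2,p_3}\le 0$.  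But then evaluating $\mathbf{w}$ at $p_2$ gives
\[
-1 = w_{p_2}=\alpha_1(-b_{p_1,p_2})+\alpha_2(-b_{p_2,p_3})\ge 0,
\]
since $\alpha_1,\alpha_2>0$, a contradiction.  This is the missing step; your proposed alternative route does not supply it.
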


We now use the chosen set of smoothable edges $i\edge(i+1)$, $i\in I$ to define a filtration by ideals
\[
\mathsf{T}(\mathsf{V}^*) = \mathcal{F}_0 \supseteq \mathcal{F}_1 \supseteq \mathcal{F}_2 \supseteq ...
\]
as follows. For $m \ge 0$, let 
\[
\Theta_m = \{\boldsymbol{\theta}_{i_1}+...+\boldsymbol{\theta}_{i_m}: i_1,...,i_m \in I\}
\] 
be the set of weight vectors that can be written as a sum of $m$ of the chosen smoothable weights.  In particular $\Theta_0 = 0$, and $\Theta_1$ is the set of chosen smoothable weights.  Denoting the monomial of weight ${\bf w}$ by ${\bf x}^{\bf w} = x_0^{w_0} \dots x_{n-1}^{w_{n-1}}$, we define the filtration by declaring that
\begin{align}\label{eq:smoothableFiltration}
{\bf x}^{\bf w}  \in \mathcal{F}_a \textrm{ if and only if } {\bf w}\in \Theta_m+\mathbb{Z}_{\ge0}^n \textrm { for some } m\ge a.
\end{align}

Note that $\mathcal{F}_a \mathcal{F}_b \subseteq \mathcal{F}_{a+b}$, and that the linear independence of $\Theta_1$ (\autoref{lm:smoothableWeightLinIndep}) guarantees that $\min\limits_{0 \neq f\in\mathcal{F}_a} {\rm deg}(f) \to \infty$ as $a\to \infty$. In particular, we have $\cap_{a\ge 0} \mathcal{F}_a = 0$.

\begin{theorem}\label{thm:quantumTorelliNoCycles}
For every choice of non-zero constants $\gamma_i\in\mathbb{C}$, $i\in I$, the algebra $\mathsf{A}_q$ admits a flat filtered deformation to a quadratic $\mathbb{C}[\varepsilon]$-algebra $\mathsf{T}(\mathsf{V}^*)[\varepsilon]/J_{q,I}$,  whose two-sided ideal $J_{q,I}$ of relations is generated by quadratic elements of the form 
\begin{equation}\label{eq:deformedRelationsNoCycles}
r_{ab}(\varepsilon) = - x_b x_a + q_{ba} x_a x_b + \sum_{m=1}^N \varepsilon^m \left(\sum_{\substack{-\mathrm{e}_a-\mathrm{e}_b+\mathrm{e}_k+\mathrm{e}_\ell \in \Theta_m\\k\le \ell}} C_{ba}^{k\ell} x_k x_\ell\right),~~~a<b,
\end{equation}
for some $N \ge 1$, 
where $C_{ba}^{k\ell}=\gamma_i$ whenever $-\mathrm{e}_a-\mathrm{e}_b+\mathrm{e}_k+\mathrm{e}_\ell =\boldsymbol{\theta}_i$. Specializing to $\varepsilon\in\mathbb{C}^\times$, we obtain a quadratic $\mathbb{C}$-algebra $\mathsf{A}_{q,I}$ that is independent, up to a filtered automorphism of $\mathsf{V}^*$, of the non-zero constants $\gamma_i$ or $\varepsilon$.

Moreover, if $\Theta_m$ does not contain a weight of the form $e_a - e_b$ for any $m\geq1$, then the constants $C_{ba}^{kl}$ in \eqref{eq:deformedRelationsNoCycles} are uniquely determined by $\gamma_i,i\in I$.
\end{theorem}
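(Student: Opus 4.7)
The plan is to construct the deformation inductively in powers of $\varepsilon$, exploiting $(\bkx)^n$-equivariance to reduce the obstruction analysis to a weight-by-weight computation in $\mathsf{HH}^3(\mathsf{A}_q)$. It is cleaner to work over the polynomial ring $\bC[\varepsilon_i : i\in I]$ with each $\varepsilon_i$ carrying torus weight $\boldsymbol{\theta}_i$, so that the construction is fully torus-equivariant and the obstruction at order $m$ decomposes into homogeneous components with weights exactly in $\Theta_m$; at the end one specializes $\varepsilon_i = \gamma_i \varepsilon$. Since under the genericity assumption $\mathsf{HH}^\bullet(\mathsf{A}_q)^{\bCx} \cong \mathsf{HP}^\bullet(\mathsf{B}_\ps)^{\bCx}$, the weight-theoretic structure established in \autoref{p:HHtoricLogSympl} transfers directly from Poisson cohomology to Hochschild cohomology.

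The central step, and the main obstacle, is to show $\mathsf{HH}^3(\mathsf{A}_q)^{\boldsymbol{\theta}} = 0$ for every $\boldsymbol{\theta} \in \Theta_m$ with $m\geq 2$. Write $\boldsymbol{\theta} = \sum_{i \in I} n_i \boldsymbol{\theta}_i$ with $n_i\in\bZ_{\geq 0}$. If some $n_i \geq 2$, or if two indices $i,i+1$ both lie in the support of $(n_j)$ (so the corresponding edges share a vertex, which is permissible only inside a chain of the cycle-free collection), then $\boldsymbol{\theta}$ acquires an entry $\leq -2$, hence is not $q$-Hochschild-contributing and $\mathsf{HH}^3(\mathsf{A}_q)^{\boldsymbol{\theta}}=0$ by \autoref{lm:HHtoric}. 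Otherwise all $n_i \in \{0,1\}$ and the chosen edges are pairwise vertex-disjoint; here \autoref{lm:avoidingObstructedWeights} excludes obstructed weights, \autoref{lm:smoothableWeightLinIndep} excludes the zero weight, and the vanishing of contributing weights with a single $-1$ (as noted after \autoref{p:HHtoricLogSympl}, using $b_{ii}=0$) excludes gauge-type weights. What remains is to rule out the possibility that such a sum is itself smoothable. This is the crux of the argument and is handled via the characterization in \autoref{p:HHtoricLogSympl}: if $\boldsymbol{\theta}$ were smoothable with $-1$ entries at positions $p,p'$, then, modulo the unique relation $\sum_k b_{k,\bullet} = 0$, the specific combination $\sum_{i\in I, n_i=1} \tfrac{-b_{i,\bullet}+b_{i+1,\bullet}}{b_{i,i+1}}$ would have to collapse onto a $\bC$-span of $b_{p,\bullet}$ and $b_{p',\bullet}$, a rigid linear identity ruled out by the cycle-free combinatorics of $I$ together with the valency constraint in \autoref{lm:pos_rat_valency}.

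Granted the vanishing of these obstructions, existence follows by induction on $m$, and the procedure terminates at some finite $N$ because for $m$ large the set $\Theta_m$ contains no weight whose positive part corresponds to a polynomial degree-$2$ monomial (the positive parts grow unboundedly with $m$ by \autoref{lm:smoothableWeightLinIndep}). Independence of $\mathsf{A}_{q,I}$ from the parameters up to a filtered automorphism of $\sfV^\ast$ follows by observing that the rescaling $x_i \mapsto t_i x_i$ multiplies the order-one coefficient $\varepsilon\gamma_i$ by the monomial $t^{\boldsymbol{\theta}_i}=\prod_j t_j^{(\boldsymbol{\theta}_i)_j}$; by \autoref{lm:smoothableWeightLinIndep}, the map $(t_i)\mapsto (t^{\boldsymbol{\theta}_i})_{i\in I}$ is surjective onto $(\bCx)^I$, so any two sets of nonzero $\gamma_i$ (and any rescaling of $\varepsilon$) can be intertwined.

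Finally, for the uniqueness of the $C_{ba}^{kl}$ under the extra hypothesis, two extensions of the given first-order data differ, at each order $m\geq 2$, by a $2$-cocycle class in $\mathsf{HH}^2(\mathsf{A}_q)^{\boldsymbol{\theta}}$ for some $\boldsymbol{\theta}\in\Theta_m$, well-defined only up to coboundaries coming from $\mathsf{HH}^1(\mathsf{A}_q)^{\boldsymbol{\theta}}$; concretely these correspond to filtered linear changes of coordinates $x_a \mapsto x_a + c\, \varepsilon^m x_b$ with $\boldsymbol{\theta} = e_a - e_b$. Hence if no $\Theta_m$ with $m\geq 1$ contains any weight of the form $e_a-e_b$, no such gauge transformation can modify the relations at orders $\geq 2$, and the constants $C_{ba}^{kl}$ are uniquely determined by the $\gamma_i$.
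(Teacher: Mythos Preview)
Your overall strategy—Maurer--Cartan deformation with the order-$m$ obstruction lying in $\bigoplus_{\boldsymbol{\theta}\in\Theta_m}\HH^3(\sfA_q)^{\boldsymbol{\theta}}$—is exactly the paper's.  However, your case analysis for the vanishing of this group contains a genuine error.  The claim that ``if some $n_i\ge 2$, or two adjacent indices $i,i{+}1$ both lie in the support, then $\boldsymbol{\theta}=\sum_j n_j\boldsymbol{\theta}_j$ acquires an entry $\le -2$'' is false: the positive entries of the remaining $\boldsymbol{\theta}_k$ (the colored angles) can cancel the accumulated $-1$'s.  A counterexample already appears in \autoref{ex:4chain}: $\boldsymbol{\theta}_0+\boldsymbol{\theta}_1+\boldsymbol{\theta}_3=(1,-1,1,0,-1)$ has $n_0=n_1=1$ (adjacent edges sharing vertex $1$) yet no entry below $-1$.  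The paper avoids this split entirely: it applies \autoref{lm:smoothableWeightLinIndep} and \autoref{lm:avoidingObstructedWeights} directly to all of $\Theta_m$, since the relevant contributing weights in $\HH^3$ are precisely zero, smoothable, or obstructed, and the two lemmas dispose of the first and last.

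For the smoothable case—which you rightly flag as the crux but only wave at—here is the argument in the spirit of \autoref{lm:avoidingObstructedWeights}.  Suppose $\sum_{i\in I}n_i\boldsymbol{\theta}_i=\boldsymbol{\theta}'$ with $\boldsymbol{\theta}'$ the smoothable weight of an edge $p\edge p'$, $p<p'$.  Writing both sides as combinations of rows $b_{j,\bullet}$ (with $c_i=1/b_{i,i+1}$, $c'=1/b_{p,p'}$) and using that the unique row relation is $\sum_j b_{j,\bullet}=0$, one finds that $n_jc_j$ equals $c'$ for $p\le j<p'$ and vanishes elsewhere; in particular $\{p,\dots,p'-1\}\subset I$ and $n_p=b_{p,p+1}/b_{p,p'}$.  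If $p'=p+1$ this forces $m=1$; otherwise vertex $p$ carries the two distinct smoothable edges $p\edge(p+1)$ and $p\edge p'$, whence \autoref{lm:pos_rat_valency}(2) gives $b_{p,p+1}/b_{p,p'}<0$, contradicting $n_p\ge 0$.  Finally, your independence-of-parameters argument only matches the order-one coefficients via diagonal rescaling; you still need the paper's step that $\HH^2(\sfA_q)^{\Theta_m}=0$ for $m\ge 2$ (the same zero/smoothable analysis) to conclude that any two extensions with matching first order agree up to a filtered gauge.
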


\begin{proof}
This is a standard deformation-obstruction argument using the equivalence between flat deformations of $\sfA_q$ and Maurer--Cartan elements in the Hochschild cochain complex.

For $i\in I$, let $\mu^{(i)}\in {\rm Hom}_\mathbb{C}(\mathsf{A}_q\otimes \mathsf{A}_q,\mathsf{A}_q)$ be a Hochschild $2$-cocycle representing the unique, up to scalar, Hochschild cohomology class in $\mathsf{HH}^2(\mathsf{A}_q)^{\boldsymbol{\theta}_i}$. 
We are going to use an inductive argument to construct an associative product of the form
$$
\star_\varepsilon = \sum_{m=0}^\infty \varepsilon^m \mu_{m},
$$
whose constant term $\mu_{0}$ is the original product on $\sfA_q$, whose linear term $\mu_1= \sum_{i\in I} \gamma_i \mu^{(i)}$ is the specified first-order deformation, and whose higher order terms have the form $\mu_m \in {\rm Hom}_{\mathbb{C}}(\mathsf{A}_q\otimes\mathsf{A}_q,\mathsf{A}_q)^{{\Theta}_m}$, $m\ge 2$. For this suppose that we have managed to find the $2$-cocycles $\mu_2,..., \mu_{m-1}$ such that the Gerstenhaber bracket $[\sum_{k=0}^{m-1}\varepsilon^k\mu_k,\sum_{k=0}^{m-1}\varepsilon^k\mu_{k}]_G$ is a multiple of $\varepsilon^{m}$. To find appropriate $\mu_{m}$, we look at the obstruction
$$
{\rm Obs}_m = -\frac{1}{2} \sum_{k=1}^{m-1} [\mu_{k},\mu_{m-k}]_G \in {\rm Hom}_\mathbb{C}(\mathsf{A}_q^{\otimes 3}, \mathsf{A}_q)^{{\Theta}_m}.
$$
The Jacobi identity for the Gerstenhaber bracket implies 
that ${\rm Obs}_m$ is a Hochschild $3$-cocycle, and hence \autoref{lm:avoidingObstructedWeights} and \autoref{lm:smoothableWeightLinIndep} imply that it is a Hochschild coboundary, i.e.

$$
{\rm Obs}_m = \mathsf{d}(\mu_{m}), 
$$
for a $2$-cochain $\mu_{m}\in {\rm Hom}_\mathbb{C}(\mathsf{A}_q\otimes \mathsf{A}_q,\mathsf{A}_q)^{{\Theta}_m}$. By adding the term $\varepsilon^m \mu_{m}$ to the power series expansion, we obtain that $[\sum_{k=0}^{m}\varepsilon^k\mu_k,\sum_{k=0}^{m}\varepsilon^k\mu_{k}]_G$ is a multiple of $\varepsilon^{m+1}$. The inductive process will eventually terminate, because $\mu_m(x_a,x_b)\in \mathcal{F}_{\alpha+\beta+m}$, for $x_a\in\mathcal{F}_\alpha$, $x_b\in\mathcal{F}_\beta$. This finishes the construction of $\star_{\varepsilon}$.

The variables $x_i$ generate the obtained algebra, since it is a filtered deformation of $\mathsf{A}_q$. Moreover, by working modulo $\varepsilon^m$, inductively in $m$, we arrive at the relations  \eqref{eq:deformedRelationsNoCycles}.

For the uniqueness of $\mathsf{A}_{q,I}$, we use the fact that $\mathsf{HH}^2(\mathsf{A}_q)^{\Theta_m}=0$, for $m\ge 2$, which follows from \autoref{lm:smoothableWeightLinIndep}. Any two choices of $\mu_m$ differ by $\mathsf{d}(\nu_m)$, for a Hochschild $1$-cochain $\nu_m\in{\rm Hom}_\mathbb{C}(\mathsf{A}_q,\mathsf{A}_q)^{\Theta_m}$. Therefore, the gauge equivalence $\exp(\varepsilon^m \nu_m)$ defines an isomorphism between the two choices of $\star_\varepsilon$, modulo $\varepsilon^{m+1}$. Restricting $\exp(\varepsilon^m \nu_m)$ to $\mathsf{V}^*$, we obtain the desired filtered automorphism intertwining the two choices of relations \eqref{eq:deformedRelationsNoCycles}, modulo $\varepsilon^{m+1}$.

By choosing a diagonal transformation $x_a \mapsto t_a x_a$, $a=0,1,...,n-1$, so that $(t_0, ..., t_{n-1})\cdot \boldsymbol{\theta}_i = \gamma_i$, $i\in I$, up to isomorphism, we can make all $\gamma_i=1$. By choosing a further diagonal transformation with $(t_0, ..., t_{n-1})\cdot \boldsymbol{\theta}_i = \varepsilon$, $i\in I$ we can also make $\varepsilon=1$.

Finally, suppose that $\Theta_m$, $m\ge 1$, do not contain weights of the form $e_a - e_b$.  Then in the proof of uniqueness of $\mathsf{A}_{q,I}$, the restriction $\exp(\varepsilon^m \nu_m)$ to $\mathsf{V}^*$ has to be identity. Therefore, the constants $C_{ba}^{k\ell}$ are uniquely determined, once $\gamma_i$ are fixed.
\end{proof}

Let us call the weights of the form $-\mathrm{e}_a-\mathrm{e}_b+\mathrm{e}_k+\mathrm{e}_\ell$, $a\not=b$, \textit{quadratic}. Note that these are the sort of weights appearing in the relations \eqref{eq:deformedRelationsNoCycles}. Also, each smoothable weight is automatically quadratic.

\begin{example}\label{ex:running3}
Continuing with \autoref{ex:running2}, let us find a deformation that involves all three smoothable edges $1\edge 2$, $2\edge 3$ and $0\edge 4$. Recall that the smoothable weights are $\boldsymbol{\theta}_{12} = (2,-1,-1,0,0)$, $\boldsymbol{\theta}_{23} = (0,2,-1,-1,0)$ and $\boldsymbol{\theta}_{04}=(-1,0,1,1,-1)$; they form the set $\Theta_1$. 
The quadratic weights in $\Theta_2$  are
$$
\boldsymbol{\theta}_{12} + \boldsymbol{\theta}_{04} = (1,-1,0,1,-1),
\text{~~~~~and~~~~~}
\boldsymbol{\theta}_{23} + \boldsymbol{\theta}_{04} = (-1,2,0,0,-1).
$$
The set $\Theta_3$ has only one quadratic weight: $$\boldsymbol{\theta}_{12} + \boldsymbol{\theta}_{23} + \boldsymbol{\theta}_{04} = (1,1,-1,0,-1).$$
The remaining $\Theta_m$, $m\ge 4$, do not have quadratic weights. Therefore, we should be looking at the deformation given by the relations
$$
\begin{matrix}
x_1 x_0 &=& v^{-1} ~x_0 x_1, \\
x_2 x_0 &=& v ~x_0 x_2, \\
x_3 x_0 &=& v^{-3}~ x_0 x_3, \\
x_4 x_0 &=& v^3 ~x_0 x_4& + ~\varepsilon~ x_2x_3&+~\varepsilon^2~ C_{40}^{11}~ x_1^2,\\
x_2 x_1 &=& v^{12}~ x_1x_2& +~\varepsilon~ x_0^2, \\
x_3 x_1 &=& v^{-12} ~x_1 x_3, \\ 
x_4x_1 &=& v^{-1} ~x_1 x_4 & &+~\varepsilon^2~C_{41}^{03}~x_0x_3, \\
x_3 x_2 &=& v^6~ x_2 x_3&+~\varepsilon~ x_1^2, \\
x_4 x_2 &=& v^7 ~x_2 x_4 & & & + ~\varepsilon^3~C_{42}^{01}~x_0x_1, \\
x_4 x_3 &=& v^{-9}~x_3x_4.
\end{matrix}
$$
To find the values of the coefficients $C_{ba}^{k\ell}$, we use the principle of \textit{confluence}. Specifically, we pick a cubic word $x_\gamma x_\beta x_\alpha$, $\gamma>\beta>\alpha$, and reduce it in two ways to a linear combination of cubic words, in which the variables appear in non-decreasing order. This procedure will impose equations on $C_{ba}^{k\ell}$, which we know must admit solutions. For instance, looking at the word $x_4x_3x_0$, we obtain the following two reductions:
\begin{align*}
x_4 x_3 x_0  &=v^{-9} x_3 x_4 x_0 \\
&= v^{-9} x_3 (v^3 x_0 x_4 + \varepsilon x_2 x_3 + \varepsilon^2 C_{40}^{11} x_1^2) \\
&= v^{-9} x_0 x_3 x_4 + \varepsilon v^{-3} x_2 x_3^2 + \varepsilon^2 v^{-9}(1+C_{40}^{11}v^{-24})x_1^2 x_3,
\end{align*}
and
\begin{align*}
x_4x_3 x_0 &= x_4 v^{-3} x_0 x_3 \\
&= (v^3 x_0 x_4 + \varepsilon x_2 x_3+ \varepsilon^2 C_{40}^{11} x_1^2) v^{-3} x_3 \\
&= v^{-9} x_0 x_3 x_4 + \varepsilon v^{-3} x_2 x_3^2 + \varepsilon^2 C_{40}^{11}v^{-3} x_1^2x_3.
\end{align*}
Equating the $\varepsilon^2$ terms, we deduce that $C_{40}^{11} = \frac{v^{24}}{v^{30}-1}$. Similarly, we find $C_{41}^{03} = \frac{v^{-10}}{1-v^{5}}$ and  $C_{42}^{01} = \frac{v^2(1+v^{10})}{(1-v^5)(1-v^{30})}$.
\end{example}

\begin{example}\label{ex:4chain}
In \cite{Matviichuk2020}, we obtained a classification of all possible smoothing diagrams for $n=5$ with the aid of a computer.  The following is in some sense the most complicated cycle-free smoothing diagram: the deformed quadratic relations have a term of order $\varepsilon^6$, which is the maximal possible when $n=5$ and the smoothing diagram has no cycles. 
$$
b = (b_{ij})_{i,j=0}^4=
\begin{pmatrix}
0& 3& -1& -1& -1\\
-3& 0& 2& 2& -1\\
1& -2& 0& 2& -1\\
1& -2& -2& 0& 3\\
1& 1& 1& -3& 0
\end{pmatrix}\hskip0.2\textwidth
\raisebox{-1.5cm}{\includegraphics[scale=1.0]{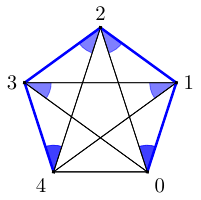}}
\iffalse
\pgfdeclarelayer{background layer}
\pgfdeclarelayer{foreground layer}
\pgfsetlayers{background layer,main,foreground layer}
\begin{tikzpicture}[baseline=-1ex,
rotate=360/2/5,
scale=0.9,line join = round 
    ] 
    
    \begin{pgfonlayer}{main}
\foreach \n in {1,...,5}
{
    \coordinate (v\n) at ({-90+((\n-1)*360/5)}:1.5);
}
\node[below right] at (v1) {$0$};
\node[right] at (v2) {$1$};
\node[above] at (v3) {$2$};
\node[left] at (v4) {$3$};
\node[below left] at (v5) {$4$};

\foreach \n in {1,...,5}
{
	\foreach \m in {1,...,5}
	\draw (v\n) -- (v\m);
}
\end{pgfonlayer}

\smoothedge{v1}{v2}{v3}{v4}
\smoothedge{v2}{v3}{v1}{v1}
\smoothedge{v3}{v4}{v5}{v5}
\smoothedge{v4}{v5}{v3}{v2}

\begin{pgfonlayer}{foreground layer}
\foreach \n in {1,...,5}
{
	\draw[fill] (v\n) circle (0.03);
}
\end{pgfonlayer}
 \end{tikzpicture}
\fi
$$

The matrix $\ps=(\ps_{ij})_{i,j=0}^4$ of the Poisson bracket coefficients is given by
$$
\ps = \dfrac{1}{30} 
\begin{pmatrix}
0 & -6 & 6 & -2 & 2 \\
6 & 0 & -3 & -1 & -2 \\
-6 & 3 & 0 & -3 & 6 \\
2 & 1& 3 & 0 & -6 \\
-2 & 2 & -6 & 6 & 0
\end{pmatrix}.
$$

Setting $q_{ji} = e^{\ps_{ji}}$, we obtain the toric algebra $\mathsf{A}_q = \mathbb{C}\abrac{x_0,..,x_4} / (x_j x_i - q_{ji} x_i x_j)$. The  set $\Theta_1$ consists of the smoothable weights 
$$
\begin{matrix}
\boldsymbol{\theta}_0 &=& (&-1,&-1,&1,&1,&0&), \\
\boldsymbol{\theta}_1 &=& (&2,&-1,&-1,&0,&0&), \\
\boldsymbol{\theta}_2 &=& (&0,&0,&-1,&-1,&2&), \\
\boldsymbol{\theta}_3 &=& (&0,&1,&1,&-1,&-1&), \\
\end{matrix}
$$
and the quadratic weights appearing in $\Theta_m$, $m\ge2$, are as follows:
$$
\begin{matrix}
\Theta_2: & \boldsymbol{\theta}_0+\boldsymbol{\theta}_2 &=& (&-1,&-1,&0,&0,&2&), \\
& \boldsymbol{\theta}_0+\boldsymbol{\theta}_3&=&(&-1,&0,&2,&0,&-1&),\\
& \boldsymbol{\theta}_1+\boldsymbol{\theta}_3&=&(&2,&0,&0,&-1,&-1&),\\
\Theta_3: & \boldsymbol{\theta}_0+\boldsymbol{\theta}_2+\boldsymbol{\theta}_3&=&(&-1,&0,&1,&-1,&1&),\\
& \boldsymbol{\theta}_0+\boldsymbol{\theta}_1+\boldsymbol{\theta}_3&=&(&1,&-1,&1,&0,&-1&),\\
\Theta_4: & \boldsymbol{\theta}_0+\boldsymbol{\theta}_1+\boldsymbol{\theta}_2+\boldsymbol{\theta}_3&=&(&1,&-1,&0,&-1,&1&),\\
\Theta_5: & \emptyset \\
\Theta_6: & 2\boldsymbol{\theta}_0+\boldsymbol{\theta}_1+\boldsymbol{\theta}_2+2\boldsymbol{\theta}_3&=&(&0,&-1,&2,&-1,&0&),\\
\end{matrix}
$$

Therefore, there exists a deformation $\mathsf{A}_{q,I}$ with the quadratic relations the following form,
$$
\begin{matrix}
x_1x_0 &=& v^6~x_0x_1 &+\varepsilon~x_2x_3&+\varepsilon^2C_{10}^{44}~x_4^2,\\
x_2x_0 &=& v^{-6}~x_0x_2, \\
x_2x_1 &=& v^3~x_1x_2&+\varepsilon~x_0^2, \\
x_3x_0 &=& v^2~x_0x_3 &&&+\varepsilon^3C_{30}^{24}~x_2x_4,\\
x_3x_1 &=& v~x_1x_3 &&&&+\varepsilon^4C_{31}^{04}~x_0x_4&~~~~~&+\varepsilon^6C_{31}^{22}~x_2^2, \\
x_3x_2 &=& v^3~x_2x_3 & +\varepsilon~ x_4^2,\\
x_4x_0 &=& v^{-2}~x_0x_4 &&+\varepsilon^2C_{40}^{22}~x_2^2,\\
x_4x_1 &=& v^2~x_1x_4 &&&+\varepsilon^3C_{41}^{02}~x_0x_2,\\
x_4x_2 &=& v^{-6}~x_2x_4, \\
x_4x_3 &=& v^6~x_3x_4&+\varepsilon~x_1x_2 &+\varepsilon^2C_{43}^{00}~x_0^2.
\end{matrix}
$$
where $v = e^{1/30}$.

By solving the confluence equations for the constants $C_{ba}^{k\ell}$, starting from the $\varepsilon^2$ coefficients and moving inductively to the $\varepsilon^6$ coefficient, we obtain the constants
$$
\begin{matrix}
C_{10}^{44} & =& C_{43}^{00} & = & -\dfrac{v^{12}}{1-v^{15}}, &&C_{31}^{04} & = & \dfrac{v^5(1-v^5)(1+v^{10})}{(1-v^{10})^2(1-v^{15})^2},\\
&&C_{40}^{22} & = & \dfrac{1}{v^5(1-v^{10})}, \\
C_{30}^{24} & =&C_{41}^{02} & = & \dfrac{1+v^{10}}{v^5(1-v^{10})(1-v^{15})}, &&C_{31}^{22} &=& \dfrac{v^2(1+v^5+v^{15})}{(1-v^5)^3(1+v^5)^4(1-v^{15})^2}.\\
\end{matrix}
$$
which determine the algebra $\sfA_{q,I}$.
\end{example}

\begin{proof}[Proof of \autoref{lm:avoidingObstructedWeights}]  We must show that no non-negative integer linear combination of smoothable weights is obstructed.   Suppose to the contrary that ${\bf w} = \sum_{i\in I}\nu_i \boldsymbol{\theta}_i$ is obstructed, where each $\boldsymbol{\theta}_i$ is a smoothable weight and each $\nu_i\in \mathbb{Z}_{\ge0}$.    
  Then, for some indices $i_1 < i_2 < i_3$, we have $w_{i_1}=w_{i_2}=w_{i_3}=-1$ and $\mathbf{w} = -\mu_1 b_{i_1,\bullet} + \mu_2 b_{i_2,\bullet} + \mu_3 b_{i_3,\bullet}$.  We will use these equations to derive a contradiction.

  First, we claim that the following holds:
  \begin{gather}
    b_{i_1,i_2}+b_{i_2,i_3}+b_{i_3,i_1} = 0,\label{e:b-sum} \\
    -\mu_1 + \mu_2 + \mu_3 = 0.     \label{e:mu-sum}
  \end{gather}
  which immediately implies that 
    \begin{equation}\label{e:w-lin-comb}
   {\bf w} = \mu_1(-b_{i_1,\bullet}+b_{i_2,\bullet}) + \mu_3(-b_{i_2,\bullet}+b_{i_3,\bullet}).
    \end{equation}  
    
Indeed, for \eqref{e:b-sum}, consider the three-by-three matrix $B' := (b_{i_j,i_k})_{j,k=1}^3$, which is alternating and nonzero, and therefore has rank two.  The vector $(-1,-1,-1)$ lies in its row span, and hence without loss of generality, we can write this vector as a linear combination of the first two rows.  Hence there exist $c_1, c_2$ such that $c_1(0,b_{i_1,i_2},b_{i_1,i_3})+c_2(-b_{i_1,i_2},0,b_{i_2,i_3}) = (-1,-1,-1)$. This implies that $c_2=-c_1$, which in turn yields \eqref{e:b-sum}.  

 Meanwhile, for \eqref{e:mu-sum}, it suffices to show that the solutions of \eqref{e:mu-sum} are exactly the triples $(\mu_1,\mu_2,\mu_3)$  for which the vector
  $v := (-\mu_1 b_{i_1,i_j} + \mu_2 b_{i_2,i_j} + \mu_3 b_{i_3,i_j})_{j = 1}^3$ is a scalar multiple of $(-1,-1,-1)$.  Indeed, since the difference of any two of the rows of $B'$ is a multiple of $(-1,-1,-1)$,  all  $\mu_1,\mu_2,\mu_3$ satisfying \eqref{e:mu-sum} must yield $v \in \bC \cdot (-1,-1,-1)$.  Conversely, the condition on $\mu_1,\mu_2,\mu_3$ to have this is linear and nontrivial, as $b_{i_1,i_1}=0$.  As a result, \eqref{e:mu-sum} must be satisfied in order to have $v_1=v_2=v_3$.

Next, let us write each smoothable weight as
\begin{equation}\label{e:theta-i}
    \boldsymbol{\theta}_i = c_i (-b_{i,\bullet}+b_{i+1,\bullet}), \text{ for some } c_i \in \bC^*.
  \end{equation}
  It follows that 
  \[ \mathbf{w} = \sum_{i=i_1}^{i_2-1} c_i^{-1}\mu_1 \boldsymbol{\theta}_i + \sum_{i=i_2}^{i_3-1} c_i^{-1} \mu_3 \boldsymbol{\theta}_i. \]
  Linear independence of the $\boldsymbol{\theta}_i$ then implies that
  \begin{equation}\label{e:lambda-i}
    \nu_i = \begin{cases} c_i^{-1} \mu_1, & \text{if }  i_1 \leq i < i_2, \\
      c_i^{-1} \mu_3, & \text{if }  i_2 \leq i < i_3, \\
      0, & \text{otherwise.}
      \end{cases}
   \end{equation}
   We claim that $\mu_1, \mu_3 \neq 0$.
 Since $w_{i_1}=-1$, the fact that $\theta_{i,i_1} \geq 0$ for $i > i_1$  implies that $\nu_{i_1} > 0$. Thus $\mu_1 \neq 0$.  Similarly, $w_{i_3} = -1$ implies $\nu_{i_3} > 0$, so $\mu_3 \neq 0$.  As a consequence, $\nu_i > 0$ for all $i_1 \leq i < i_3$, with $\nu_i = 0$ otherwise. In particular, the edges $i \edge (i+1)$ are all smoothable for $i_1 \leq i < i_3$.

By \autoref{lm:pos_rat_valency}.2, smoothability of these edges implies that $b_{i,i+1}$, $i_1\le i <i_3$, are positive rational multiples of each other. Hence without loss of generality, we may assume that they are all positive. Smoothability then yields the following array of inequalities between real numbers:
\begin{equation}\label{e:inequalities}
\begin{matrix}
b_{i_1,i_1+1} & \ge & b_{i_1,i_1+2} & \ge & ... & \ge & \boxed{b_{i_1,i_2}} & \ge & b_{i_1,i_2+1} & \ge & ... & \ge & \boxed{b_{i_1,i_3}} \\
&& \rotatebox[origin=c]{90}{$\ge$} &&& & \rotatebox[origin=c]{90}{$\ge$}&& \rotatebox[origin=c]{90}{$\ge$} &&&& \rotatebox[origin=c]{90}{$\ge$} \\
&& b_{i_1+1,i_1+2} &\ge&...&\ge& b_{i_1+1,i_2}&\ge& b_{i_1+1,i_2+1} & \ge&...&\ge& b_{i_1+1,i_3} \\
&&&&&&&& \rotatebox[origin=c]{90}{$\ge$}  &&&& \rotatebox[origin=c]{90}{$\ge$} \\
&&&&& & &&\vdots&&&&\vdots \\
&&&&&&&& \rotatebox[origin=c]{90}{$\ge$}  &&&& \rotatebox[origin=c]{90}{$\ge$} \\
&&&&&&&&b_{i_2,i_2+1} & \ge & ... & \ge & \boxed{b_{i_2,i_3}} \\
&&&&&&&&   &&&& \vdots\\
&&&&&&&&   &&&& \rotatebox[origin=c]{90}{$\ge$} \\
&&&&&&&&&&&& b_{i_3-1,i_3} \\
\end{matrix}
\end{equation}
As a consequence of this, and the inequalities $\nu_i >0$ for $i_1 \leq i < i_3$, we get that  $c_i > 0$ for all $i_1 \leq i < i_3$. By \eqref{e:lambda-i}, we conclude that $\mu_1, \mu_3 > 0$.

We now derive a contradiction by showing that the positivity of $\mu_1$ and $\mu_3$ is incompatible with \eqref{e:w-lin-comb}.
The inequalities \eqref{e:inequalities} yield
\begin{equation}
b_{i_1 i_3} \leq \min(b_{i_1 i_2}, b_{i_2 i_3}).
\end{equation}
Thanks to  \eqref{e:b-sum} we get $b_{i_1 i_2} + b_{i_2 i_3} \leq \min(b_{i_1 i_2}, b_{i_2 i_3})$. Thus, $b_{i_1 i_2}, b_{i_2 i_3} \leq 0$.

However, \autoref{e:w-lin-comb} for $\bullet=i_2$ gives
\begin{align}
  \label{e:w-i2}
  \mu_1(-b_{i_1 i_2}) + \mu_3(b_{i_3 i_2}) &= -1.
\end{align}
But the left-hand side is  non-negative, a contradiction. \qedhere
\end{proof}

\subsection{Deforming along a full cycle}
\label{subsec:FOcycle}

We now consider deformations obtained from a smoothing diagram given by a cycle involving all of the vertices.  Such smoothing diagrams were classified by the first author in \cite{Matviichuk2023b}; let us recall the result.  

Because of the cyclic symmetry, it is convenient in this subsection to consider the indices of the variables $x_0,\ldots,x_{n-1}$ as elements in $\bZ/n\bZ$.  If $k$ is an integer, we denote by $\overline{k}$ its remainder modulo $n$, so that $0 \leq \overline{k}< n$. 
Given an integer $0 < k < n$ such that with $\gcd(n,k)=\gcd(n,k+1)=1$, we define a normalized matrix $\FO{n}{k} \in \askew{\bC}$ by declaring that its entries are given by the formula 
\begin{align}
\FO[ij]{n}{k} &:= \overline{j-i} + \overline{k(j-i)} - n & 0 \le i\neq j < n \label{eq:FOPoissonMatrix}
\end{align}

\begin{proposition}[{\cite[Proposition 4.2 and Corollary 3.4]{Matviichuk2023b}}]\label{prop:cycle-classification}
Let $\ps\in\askew{\bC}$ be a normalized alternating matrix of corank one.  Then the following statements hold:
\begin{enumerate}
\item The associate smoothing diagram has a cycle of $n$ smoothable edges if and only if, up to permutations of the rows/columns, $\ps$ is a scalar multiple of $\FO{n}{k}$ for some integer $0 < k < n$ such that $\gcd(n,k)=\gcd(n,k+1)=1$.
\item In this case, the smoothable weights are given by
\[
\boldsymbol{\theta}_i := -{\rm e}_i - {\rm e}_{i+k'+1} + {\rm e}_{i+1} + {\rm e}_{i+k'}
\]
for $i \in \bZ/n\bZ$, where $k' := \overline{k}^{-1}$ is the inverse of $k$ modulo $n$.
\end{enumerate}

\end{proposition}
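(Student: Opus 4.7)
The strategy is to pass to the biresidue matrix $b$ associated to $\ps$, since by \autoref{p:HHtoricLogSympl} the smoothable weights are encoded by the rows of $b$, and to classify those $b$ whose smoothing diagram is a full $n$-cycle. After permuting indices, I may assume the cycle of smoothable edges is $0 \edge 1 \edge \cdots \edge (n-1) \edge 0$. \autoref{lm:pos_rat_valency}(2) shows the entries $c_i := b_{i,i+1} > 0$ are pairwise positive rational multiples; a first step of the analysis is to show all $c_i$ must in fact be equal, so that after rescaling $\ps$ by an overall constant I may take $c_i = 1$ for every $i \in \bZ/n\bZ$. Then the smoothable weight for edge $i \edge (i+1)$ is simply $\boldsymbol{\theta}_i = b_{i+1,\bullet} - b_{i,\bullet}$, with value $-1$ at positions $i$ and $i+1$ and nonnegative integer entries elsewhere summing to $+2$ (by $\sum_j b_{i,j} = 0$).

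Reading these constraints columnwise, for each $k \in \bZ/n\bZ$ the function $f_k: \bZ/n\bZ \to \bZ$, $i \mapsto b_{i,k}$, is a cyclic staircase with exactly two consecutive increments of $-1$ (at the transitions $i = k-1 \to k$ and $i = k \to k+1$, corresponding to $f_k(k-1)=1$, $f_k(k)=0$, $f_k(k+1)=-1$) and all other increments nonnegative integers summing to $+2$. Skew-symmetry $b_{i,k} = -b_{k,i}$ couples the staircases across columns, turning the problem into a globally rigid combinatorial puzzle.

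The main task, which I expect to be the chief obstacle, is to show that these constraints leave only a one-parameter family of solutions, indexed by an integer $k' \in (\bZ/n\bZ)^\times$. Concretely, I would track the positions of the positive entries of $\boldsymbol{\theta}_i$ as $i$ advances by one, and argue via skew-symmetry and row normalization that they translate by a fixed shift along $\bZ/n\bZ$. The requirement that the diagram be a single $n$-cycle (rather than a disjoint union of shorter cycles) then forces $\gcd(n, k'+1) = 1$, and invertibility of $k'$ modulo $n$ yields $\gcd(n, k') = \gcd(n, k) = 1$ where $k := (k')^{-1} \bmod n$; combined with the identity $k'+1 \equiv k'(k+1) \pmod n$, these give the stated conditions $\gcd(n, k) = \gcd(n, k+1) = 1$.

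Once $b$ is determined up to scaling by $k'$, a direct computation identifies it with the biresidue matrix of a scalar multiple of $\FO{n}{k}$, so by \autoref{lm:biresidueMatrixDef} the matrix $\ps$ (after reversing the initial permutation) is a scalar multiple of $\FO{n}{k}$. The explicit formula $\boldsymbol{\theta}_i = -{\rm e}_i - {\rm e}_{i+k'+1} + {\rm e}_{i+1} + {\rm e}_{i+k'}$ then follows by substitution into \eqref{eq:FOPoissonMatrix}. The delicate part is the rigidity step; for this I would use a cyclic telescoping balancing the total ``drop mass'' ($-2n$, two drops per column) against the total ``rise mass'' ($+2n$, two rises per edge), together with skew-symmetry to eliminate all configurations other than the shift-by-$k'$ family.
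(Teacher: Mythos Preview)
The paper does not prove this proposition; it is quoted from \cite{Matviichuk2023b} (Proposition 4.2 and Corollary 3.4 there), so there is nothing in the present paper to compare your argument against. Your outline is therefore an independent attempt at a result whose proof lives elsewhere.

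That said, your strategy via the biresidue matrix is natural and the setup is correct: once one assumes the smoothable cycle is $0\edge 1\edge\cdots\edge(n-1)\edge 0$ and all $c_i=b_{i,i+1}$ are equal to $1$, the column functions $f_k(i)=b_{i,k}$ behave exactly as you describe. However, two steps are genuinely incomplete. First, you assert that ``all $c_i$ must in fact be equal'' but give no argument; \autoref{lm:pos_rat_valency}(2) only gives that consecutive $c_i$ are positive rational multiples of one another, and the cyclic constraint $\prod_i (c_i/c_{i+1})=1$ does not by itself force equality. One needs additional input, for instance exploiting that the positive entries of each $\boldsymbol{\theta}_i$ sum to exactly $2$ together with skew-symmetry, but this requires a real argument. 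Second, the ``rigidity step'' you flag as delicate is indeed the heart of the matter, and the mass-balancing heuristic you propose (total drop $-2n$ versus total rise $+2n$) is automatically satisfied by \emph{any} configuration compatible with your staircase constraints, so it cannot by itself single out the shift-by-$k'$ family. You would need a sharper invariant, e.g.\ tracking how skew-symmetry $b_{ij}=-b_{ji}$ forces the location of the rises in column $j$ to be determined by the drops in row $j$, and then showing inductively that the support of $\boldsymbol{\theta}_{i+1}$ is the translate of that of $\boldsymbol{\theta}_i$.

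Finally, your derivation of the $\gcd$ conditions is muddled: in your permuted indexing the cycle is already $0\edge1\edge\cdots\edge(n-1)\edge0$, so the condition $\gcd(n,k'+1)=1$ cannot come from ``the diagram being a single $n$-cycle'' in those coordinates. The $\gcd$ conditions should instead emerge from the requirement that the positive entries of the $\boldsymbol{\theta}_i$ hit every residue class, together with the consistency of the shift with skew-symmetry; this needs to be argued before undoing the permutation.
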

Note that the cycle of smoothable edges is then as follows:
\[
0 \edge (k'+1) \edge 2(k'+1) \edge \cdots \edge (n-1)(k'+1) \edge 0
\]
Consequently, the generic multiplicatively alternating matrices with a full cycle of smoothable edges are given by entrywise exponentiation of a multiple of $\FO{n}{k}$ for some $k$, i.e.~they have the form
\[
q_{ij} = v^{\FO[ij]{n}{k}}
\]
where $v \in \bCx$ is a constant.  Any deformation of the resulting algebra $\sfA_q$ over $\bC[\varepsilon]/(\varepsilon^2)$ is then isomorphic to one of the form
\begin{equation}\label{eq:CycleFirstOrderDef}
x_j x_i -  q_{ji} x_ix_j =  \begin{cases}
\varepsilon \gamma_i x_{i+1}x_{i+k'} , & j-i =  (k'+1)\mod n \\
-\varepsilon q_{ji} \gamma_j x_{j+1}x_{j+k'} , & j-i = - (k'+1)\mod n \\
0, & \mathrm{otherwise} \\
\end{cases}
\end{equation}
for constants $\gamma_i \in \bC$.  We claim that such deformations are unobstructed, and can indeed be extended to deformations that are algebraic functions on a smooth curve.

If at least one $\gamma_i$ is zero, then we are deforming along a cycle-free collection of smoothable edges, and \autoref{thm:quantumTorelliNoCycles} applies.  Otherwise, by rescaling the variables, we may arrange that all $\gamma_i$s are equal, so there is effectively one free parameter.  We establish the unobstructedness in this case by showing that the relations \eqref{eq:CycleFirstOrderDef} can be realized by degeneration of the celebrated elliptic algebras of Feigin--Odesskii, whose definition we now recall.

Let $\tau \in \mathbb{C}$, ${\rm Im}(\tau)>0$ be a parameter defining an elliptic curve $E=\mathbb{C}/\Lambda$, where $\Lambda= \mathbb{Z}+\tau\mathbb{Z}$ is the associated lattice.  Let $\varepsilon = \exp\rbrac{\tfrac{2 \pi \sqrt{-1}\tau}{n}}$; thus $\varepsilon^{n}$ is the square of the classical nome. Recall that the \emph{theta functions} of degree $n$ are defined by
$$
\theta_j (\Ept) = \theta(\Ept+\tfrac{j}{\n}\tau) \theta(\Ept+\tfrac{1}{\n}+\tfrac{j}{\n}\tau)~...~\theta(\Ept+\tfrac{\n-1}{\n}+\tfrac{j}{\n}\tau)  \exp\rbrac{2\pi \sqrt{-1}\rbrac{j \Ept +\tfrac{j(j-\n)}{2\n} \tau + \tfrac{j}{2\n} }},
$$
for $0 \le j < n$, where
$$
\theta(\Ept) = \sum_{k\in \mathbb{Z}} (-1)^k \exp\rbrac{2\pi \sqrt{-1} \rbrac{k\Ept + \tfrac{k(k-1)}{2} \tau }}
$$

\begin{definition}[Feigin--Odesskii~\cite{Feigin1998,Odesskii1989,Odesskii2002}]
For $0 < k < n$ such that $\gcd(\n,k)=1$, the \emph{Feigin-Odesskii algebra $Q_{n,k}(\varepsilon,z)$} is the quotient $\mathbb{C}\langle x_i:i\in \mathbb{Z}/n\mathbb{Z}\rangle$ by the quadratic relations
\begin{equation}\label{eq:FO_explicit_formula} \mathsf{rel}_{ij}^{nk} = \mathsf{rel}_{ij}^{nk}(\varepsilon,\Ept) =
\sum_{r\in \mathbb{Z}/n\mathbb{Z}} 
\dfrac{\theta_{j-i+r(k-1)}(0)}{\theta_{kr}(\Ept)
\theta_{j-i-r}(-\Ept)} x_{j-r}x_{i+r}
\end{equation}
for all  $i,j\in\mathbb{Z}/n\mathbb{Z}$.
\end{definition}
\begin{remark}
Note that the right hand side of \eqref{eq:FO_explicit_formula} depends \emph{a priori} on the parameter $\tau$.  However, it is invariant under translations of the form $\tau\mapsto \tau+n$ so that the relations depend only on $\varepsilon$, i.e.~are well-defined.
\end{remark}

It is known that $Q_{n,k}(\varepsilon,\Ept)$ has the Hilbert series of the polynomial ring in $n$ variables provided that $k$ is equal to one~\cite{Artin1990,Smith1992,Tate1996} or that $z+\Lambda \in \bC/\Lambda$ is not a nonzero torsion point of the elliptic curve~\cite{Chirvasitu2021}.  This is expected to hold for all $(n,k,\varepsilon,z)$.

The limit $\varepsilon \to 0$, or equivalently, $\tau \to \sqrt{-1}\infty$, corresponds geometrically to a degeneration of the elliptic curve to a singular curve of arithmetic genus zero, e.g.~a cycle of rational curves.  This limit recovers the first order deformations along a cycle as above:
\begin{proposition}\label{prop:FODeformOfToric}
 The Feigin--Odesskii relations $\mathsf{rel}^{nk}_{ij}(\varepsilon,z)=0$ from \eqref{eq:FO_explicit_formula} reduce modulo $\varepsilon^2$ to relations of the form \eqref{eq:CycleFirstOrderDef}, where $q_{ij}=v^{\FO[ij]{n}{k}}$, $v = \exp(2 \pi i z)$, and all coefficients $\gamma_i$ are nonzero.
\end{proposition}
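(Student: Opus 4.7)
The plan is to perform a direct asymptotic analysis of the Feigin--Odesskii relations \eqref{eq:FO_explicit_formula} as $\varepsilon \to 0$ with $v = e^{2\pi \sqrt{-1} z}$ held fixed. The starting point is to expand each level-$n$ theta function $\theta_j(w)$ in powers of $\varepsilon$. Using the series representation of $\theta(w)$ together with the definition of $\theta_j$ as a product of $n$ shifted theta functions times an explicit exponential prefactor, one finds that for $0<j<n$,
\[
\theta_j(w) = \zeta_j\, \varepsilon^{j(j-n)/2}\, e^{2\pi \sqrt{-1} j w}\,\bigl(1 + O(\varepsilon^{\min(j,n-j)})\bigr)
\]
for an explicit nonzero constant $\zeta_j$, whereas $\theta_0(w) = (1 - e^{2\pi \sqrt{-1} nw})(1 + O(\varepsilon))$. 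The index of $\theta$ is always to be reduced modulo $n$ into the range $\{0,\ldots,n-1\}$, and this reduction determines the $\varepsilon$-order of each theta factor.

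Substituting these expansions into \eqref{eq:FO_explicit_formula}, the $r$-th summand contributes
\[
\mathrm{const}\cdot\varepsilon^{N(r;i,j,k)}\cdot v^{M(r;i,j,k)}\cdot\bigl(1 + O(\varepsilon)\bigr)\cdot x_{j-r}x_{i+r},
\]
where $N$ and $M$ are explicit integer-valued functions determined by the quadratic exponent $\tfrac12\bigl(a(a-n) - b(b-n) - c(c-n)\bigr)$ for $a = \overline{j-i+r(k-1)}$, $b = \overline{kr}$, $c = \overline{j-i-r}$. The core combinatorial step is to minimize $N(r;i,j,k)$ over $r \in \bZ/n\bZ$. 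Using the hypotheses $\gcd(n,k) = \gcd(n,k+1) = 1$, I would show that for every $i \neq j$ the minimum is attained at precisely the two values $r = 0$ and $r = j - i$, producing the monomials $x_j x_i$ and $x_i x_j$; their coefficients combine to give the toric relation $x_j x_i = q_{ji} x_i x_j$ with $q_{ji} = v^{\FO[ji]{n}{k}}$, matching the definition \eqref{eq:FOPoissonMatrix}.

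The subleading $O(\varepsilon)$ analysis then splits into two cases. For $(i,j)$ with $j - i \not\equiv \pm(k'+1)\pmod n$, every other $r$ contributes at order $\varepsilon^2$ or higher relative to the leading order, so the relation is purely toric modulo $\varepsilon^2$. For $j - i \equiv k'+1\pmod n$, the two additional values $r = k'$ and $r = 1$ each contribute at order $\varepsilon^1$ and yield, after the toric commutation between $x_{i+k'}x_{i+1}$ and $x_{i+1}x_{i+k'}$, a single term $\varepsilon\gamma_i x_{i+1}x_{i+k'}$, with $\gamma_i$ an explicit nonzero ratio of theta-null values (nonvanishing thanks to the classical non-vanishing of $\theta_m(0)$ for $0 < m < n$ and the mild genericity assumption $v^n \neq 1$, which is also needed for the leading term to be finite). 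The case $j - i \equiv -(k'+1)\pmod n$ is symmetric and produces $-\varepsilon q_{ji}\gamma_j x_{j+1}x_{j+k'}$. The main technical obstacle is the combinatorial minimization of $N(r;i,j,k)$: this requires a careful manipulation of the remainder function $\overline{\,\cdot\,}\colon\bZ\to\{0,\ldots,n-1\}$ under the coprimality hypotheses, and is exactly where the special weights $\pm(k'+1)$ acquire their role, being the unique $(j-i)$-values that admit a third $r$-minimizer at next-to-leading order.
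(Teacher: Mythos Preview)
Your approach is essentially the same as the paper's: both perform a direct asymptotic expansion of $\theta_j(w)$ as $\varepsilon\to 0$, then track the resulting $\varepsilon$-exponent of each summand in \eqref{eq:FO_explicit_formula}. The paper packages the exponent calculation into the function $g(a,b) := f(a)+f(b)-f(a+b)$ with $f(a)=\tfrac12\overline a(n-\overline a)$, and the key combinatorial fact is that $g(a,b)\ge 1$ for $a,b\not\equiv 0$, with equality if and only if $\overline a=\overline b=1$ or $\overline a=\overline b=n-1$. This singles out exactly one additional $r$ at order $\varepsilon$ for each of the two special values of $j-i$.

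There is, however, a genuine slip in your subleading analysis. When $j-i\equiv k'+1\pmod n$, you claim that \emph{both} $r=k'$ and $r=1$ contribute at order $\varepsilon$, and then invoke a toric commutation to combine $x_{i+k'}x_{i+1}$ with $x_{i+1}x_{i+k'}$. In fact only $r=k'$ contributes at order $\varepsilon$: it gives $\overline{kr}=\overline{j-i-r}=1$, hence $g=1$, and the monomial $x_{j-k'}x_{i+k'}=x_{i+1}x_{i+k'}$ directly. For $r=1$ one has $\overline{kr}=\overline k$ and $\overline{j-i-r}=\overline{k'}$, so the exponent is $g(k,k')$, which exceeds $1$ except in the degenerate case $k\equiv\pm 1$; and when $k+k'\equiv 0\pmod n$ the numerator $\theta_{j-i+r(k-1)}(0)=\theta_0(0)$ actually vanishes, so this term is zero outright. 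No combination step is needed. Symmetrically, for $j-i\equiv -(k'+1)$ only $r=-k'$ contributes. Once you correct this, your argument goes through; note also that, as the paper remarks, the hypothesis $\gcd(n,k+1)=1$ is not actually used anywhere in the proof.
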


\begin{proof}
Everywhere below, the notation $o(\varepsilon)$ denotes a quantity negligible compared to $\varepsilon$ as $\varepsilon\to 0$.
Note that in the definition of $\theta(\Ept)$, all the terms in the series tend to $0$ as $\varepsilon\to 0$, except the terms $k=0$ and $k=1$, so that
$$
\theta(\Ept) = 1 - \exp(2\pi \ii \Ept) + o(\varepsilon).
$$
Therefore
$$
\theta_j(\Ept) = \exp \rbrac{2\pi \sqrt{-1}\rbrac{j \Ept +\tfrac{j(j-\n)}{2\n} \tau + \tfrac{j}{2\n}}} \prod_{l=0}^{\n-1} \left(1 - \exp\rbrac{2\pi\ii\rbrac{\Ept+\tfrac{l}{\n}}}\,\varepsilon^{j} + o(\varepsilon)\right)
$$
for all $0 \le j < n$.
Note that for $\zeta$ a primitive $n$-th root of $1$, we have $\prod_{j=0}^{n-1} (1-t \zeta^j) =  1- t^n$. Therefore, we have
\begin{equation}\label{eq:thetaAlphaAsympt}
\theta_j(\Ept) = \begin{cases}
\exp\rbrac{2\pi \ii\rbrac{j \Ept +\frac{j(j-\n)}{2\n} \tau + \frac{j}{2\n}}} \big(1 + o(\varepsilon) \big)& 0 < j < n \\
\Big(1- \exp\rbrac{2\pi \ii n \Ept}\Big) \big(1 + o(\varepsilon) \big)& j=0.
\end{cases}
\end{equation}
which implies that
\begin{equation}\label{eq:FOCoeffAsympt}
\dfrac{\theta_{j-i+r(k-1)}(0)}{\theta_{kr}(\Ept)
\theta_{j-i-r}(-\Ept)} =
\begin{cases}
0 &  \overline{j-i+r(k-1)}=0 \\
\dfrac{v^{\overline{j-i}}}{1-v^n}(1 + o(\varepsilon)) &  \overline{kr}=0 \neq \overline{j-i+r(k-1)} \\
-\dfrac{v^{n-\overline{kr}}}{1-v^n} (1 +o(\varepsilon)) & \overline{j-i-r}=0\neq \overline{j-i+r(k-1)} \\
\pm \varepsilon^{g(kr,j-i-r)}v^{\overline{kr}-\overline{j-i-r}}(1 + o(\varepsilon)), & \overline{kr}, \overline{j-i-r}, \overline{j-i+r(k-1)}\neq 0
\end{cases}
\end{equation}
where $g(j,l) := f(j)+f(l) - f(j+l)$, with $f(j) := \frac{1}{2}\overline{j}(n-\overline{j})$, and the sign $\pm$ is $+$ if $\overline{kr}+\overline{j-i-r}<n$ and $-$ otherwise.
 The following properties of the function $g(j,l)$ are easily verified:
\begin{enumerate}
\item\label{it:pos_h_N} $g(j,l)$ is a positive integer for all integers $j,l$ not divisible by $\n$.
\item $g(j-1,l)<g(j,l)$, if $\overline{j}+\overline{l}\le \n$.
\item $g(j+1,l)<g(j,l)$, if $\overline{j}+\overline{l}\ge \n$.
\item\label{it:min_h_N} $g(j,l)=1$ if and only if either $\overline{j}=\overline{l}=1$, or $\overline{j}=\overline{l}=\n-1$.
\end{enumerate}
Thus \eqref{eq:FOCoeffAsympt} implies that for $\varepsilon=0$ the relations \ref{eq:FO_explicit_formula} reduce to
\begin{equation}\label{eq:FOOrderZeroRels}
\dfrac{v^{\overline{j-i}}}{1-v^n} x_{j}x_i - \dfrac{v^{n-\overline{k(j-i)}}}{1-v^n} x_i x_j = 0, ~~~0\le i\not=j <n
\end{equation}
which, up to scaling, are the same as $x_jx_i - v^{\FO[ji]{n}{k}}x_ix_j=0$ for all  $0\le i\not=j <n$.  Meanwhile, by property (4) of $f$, the only nontrivial terms of order $\varepsilon$ are those for which $\overline{r}=\overline{k'}$ and $\overline{j}=\overline{i+k'+1}$, or $\overline{r}=\overline{-k'}$ and $\overline{j}=\overline{i-k'-1}$, where $k'$ is the inverse of $k$ mod $n$ as above. In this way we arrive at the relations of the form \eqref{eq:CycleFirstOrderDef} with all $\gamma_i$ nonzero, as desired.
\end{proof}

\begin{remark}
The condition $\gcd(n,k+1)=1$ has not been used in the proof of \autoref{prop:FODeformOfToric}. This suggests the possibility of generalizing  \autoref{thm:Main} beyond the case in which ${\rm corank}(\lambda)=1$.
\end{remark}

\subsection{General deformations}
Finally, we treat the case of a deformation given by an arbitrary smoothing diagram.  We will do so by ``decoupling'' the contributions of each cycle from the rest of the diagram, using the following result; again, the proof is elementary but somewhat involved, so we delay it to the end of the section.
\begin{lemma}\label{lem:subcycles}
Let $\ps$ be a normalized alternating matrix of corank one, and let $I \subset \{0,\ldots,n-1\}$ be a subset forming the vertices of a cycle in the smoothing diagram of $\ps$.  Then the following statements hold:
\begin{enumerate}
\item All smoothable edges inside (resp.~outside) $I$ have their colored angles also inside (resp.~outside) $I$.
\item The submatrix  $(\ps_{ij})_{i,j\in I}$ is normalized of corank one, and its smoothing diagram consists of the given cycle $I$, with the same colorings of the edges and angles.
\item We have the identity
\begin{align*}
\ps_{i_1j}=\ps_{i_2j} \qquad \textrm{ for all }i_1,i_2 \in I \not \ni j.
\end{align*}
\end{enumerate}
\end{lemma}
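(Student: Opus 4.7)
My plan is to establish the three parts in the order (1), (3), (2), driven by a single ``cyclic telescoping'' identity that I would prove at the outset.

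First, I would observe that, by \autoref{lm:pos_rat_valency}(1), every vertex has at most two smoothable edges, so each vertex of $I$ already has both of its smoothable edges inside the cycle; hence no smoothable edge crosses between $I$ and $J := \{0,\ldots,n-1\}\setminus I$. Writing the cycle as $i_1 \edge i_2 \edge \cdots \edge i_m \edge i_1$, the entries $b_{i_l,i_{l+1}}$ share a common sign by \autoref{lm:pos_rat_valency}(2), so the smoothability condition
\[
\theta_{l,k} = \frac{b_{i_{l+1},k}-b_{i_l,k}}{b_{i_l,i_{l+1}}} \geq 0 \qquad (k \notin \{i_l,i_{l+1}\})
\]
forces the sequence $b_{i_1,k},\ldots,b_{i_m,k},b_{i_1,k}$ to be monotone around the cycle, hence constant; call this common value $\beta_k$ for $k \notin I$. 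Part (1) for cycle edges is then immediate, as $\theta_{l,k}=0$ for $k \notin I$. For a smoothable edge $a \edge b$ outside $I$, antisymmetry of $b$ converts the telescoping identity into the assertion that $k \mapsto b_{a,k}$ and $k \mapsto b_{b,k}$ are each constant on $I$; so the components $\theta_k$ of the associated smoothable weight are equal to some non-negative integer $\alpha$ for all $k \in I$, and the identity $\sum_k \theta_k = 0$ combined with $\theta_a+\theta_b=-2$, non-negativity elsewhere, and $|I| \geq 3$ (as $I$ is a graph cycle) forces $\alpha=0$.

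For part (3), I would exploit the block structure $b_{IJ} = \mathbf{1}_I \beta^\top$ (and by antisymmetry $b_{JI} = -\beta\,\mathbf{1}_I^\top$) coming from the telescoping identity. Since $b|_\Delta = \ps|_\Delta^{-1}$ and both matrices annihilate $\mathbf{1}$, the global identity $\ps\, b = I - \tfrac{1}{n} J_n$ holds. Taking the $(i,j)$-entry for $i \in I$, $j \in J$ and using $\sum_k \ps_{i,k}=0$ yields
\[
\sum_{k \in J} \ps_{i,k}\,(b_{k,j}-\beta_j) = -\tfrac{1}{n},
\]
equivalently $(\ps_{i,k})_{k \in J}^\top M = -\tfrac{1}{n}\mathbf{1}_J^\top$, where $M := b_{JJ} - \mathbf{1}_J \beta^\top$. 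Since the right-hand side is independent of $i$, invertibility of $M$ would give part (3). To see $M$ is invertible, I would first note that $b\mathbf{1}=0$ implies $b_{JJ}\mathbf{1}_J = |I|\beta$; then, if $Mu=0$, pairing with $\mathbf{1}_J$ and using this identity together with skew-symmetry of $b_{JJ}$ forces $\beta^\top u = 0$ and hence $b_{JJ}u=0$. Extending $u$ by zero to $\tilde u \in \bC^n$ yields $b\tilde u = 0$, so $\tilde u \in \ker b = \bC\mathbf{1}$, and since $\tilde u$ vanishes on $I$ we conclude $u=0$.

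Finally, part (2) would follow from parts (1) and (3): normalization of $\ps_I$ comes from $\sum_{k \in J}\ps_{i,k}$ being constant (part (3)) combined with skew-symmetry of the $I$-submatrix; corank one follows by extending a null vector of $\ps_I$ by zero and using part (3) to lift it to a null vector of $\ps$, then invoking $\ker\ps=\bC\mathbf{1}$; and the smoothing diagram of $\ps_I$ is the given cycle because the cycle-edge smoothable weights, being supported in $I$ by part (1), descend verbatim to $\ps_I$, while the ``at most two smoothable edges per vertex'' constraint rules out new ones. The hard part will be the invertibility of $M$ in part (3): this is the step where the corank-one hypothesis genuinely enters, via the identification $\ker b = \bC\mathbf{1}$; the remaining manipulations are essentially bookkeeping on top of the telescoping identity.
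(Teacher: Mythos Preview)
Your argument is correct, and your telescoping identity $b_{ij}=\beta_j$ for $i\in I,\, j\notin I$ is exactly the engine of the paper's proof as well; part~(1) is essentially identical. The main divergence is in part~(3). The paper observes that your block structure $b_{IJ}=\mathbf{1}_I\beta^\top$ is equivalent to the statement that $b$ preserves the subspace $\Delta\cap\bC^I$; since $\ps|_\Delta=(b|_\Delta)^{-1}$, the map $\ps$ preserves it too, and evaluating $\ps(e_{i_1}-e_{i_2})\in\Delta\cap\bC^I$ at a coordinate $j\notin I$ gives $\ps_{i_1 j}=\ps_{i_2 j}$ in one line. This invariant-subspace viewpoint also yields part~(2) for free: the restrictions of $b$ and $\ps$ to $\Delta\cap\bC^I$ are mutually inverse, so $b_I$ is literally the biresidue matrix of $\ps_I$, and the smoothing-diagram statement is immediate. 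Your route via the global identity $\ps b=I-\tfrac1n J_n$ and the invertibility of $M=b_{JJ}-\mathbf{1}_J\beta^\top$ is a nice self-contained linear-algebra argument, but it is noticeably longer and recovers less structure (you never identify the biresidue matrix of $\ps_I$).

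One small gap to patch in your part~(2): extending a null vector $v$ of $\ps_I$ by zero does \emph{not} in general give a null vector of $\ps$; for $j\notin I$ you get $(\ps\tilde v)_j=\gamma_j\sum_{k\in I}v_k$ with $\gamma_j$ the common value from part~(3). You should first subtract a multiple of $\mathbf{1}_I$ from $v$ to make $\sum_{k\in I}v_k=0$, and then proceed as you indicated. Also, your ``monotone around the cycle'' phrasing implicitly treats the $b_{ij}$ as real; either rescale $\ps$ so that the cycle entries $b_{i_l,i_{l+1}}$ are positive reals (as the paper does), or argue directly that the telescoping sum $\sum_l b_{i_l,i_{l+1}}\theta_{l,k}=0$ with all $\theta_{l,k}\ge 0$ and the $b_{i_l,i_{l+1}}$ positive rational multiples of one another forces each $\theta_{l,k}=0$.
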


Now let $\ps$ be any generic normalized alternating matrix of corank one, and let 
\[
\Sigma = \{i_\alpha\edge j_\alpha\}_{\alpha=1}^A
\]
be a set of smoothable edges.  If $I \subset \{0,...,n-1\}$, we denote by $\Sigma_I \subset \Sigma$ the set of edges in $\Sigma$ whose vertices lie in $I$.  Then there is a partition
\[
\{0,\ldots,n-1\} = J \sqcup I_1 \sqcup I_1\cdots \sqcup I_m,
\]
unique up to re-indexing the components $I_1,\ldots,I_m$,  such that $\Sigma_{I_s}$ is a cycle for all $1 \leq s \leq m$, and $\Sigma_J$ is cycle-free.  Let us denote the length of this cycle by
\[
n_s := |I_s|
\]
\autoref{lem:subcycles} part (2) and \autoref{prop:cycle-classification} imply that for each $s$, the sub-matrix $(\ps_{ij})_{i,j \in I_s}$ is, up to reindexing and rescaling, of the form $\FO{n_s}{k_s}$ for some $k_s$.

For $q = \EE(\ps)$, the $q$-symmetric algebra $\sfA_q = \mathsf{S}_q(\sfV^*)$ has the form
$$
\mathsf{S}_q(\mathsf{V}^*)=\mathsf{S}_{q}(\mathsf{V}_J^*) \otimes_q \mathsf{S}_{q}(\mathsf{V}_{I_1}^*) \otimes_q ... \otimes_q \mathsf{S}_{q}(\mathsf{V}_{I_m}^*) ,
$$
where $\sfV^*_J \subset \mathsf{V}^*$ is the subspace spanned by $x_i$, $i\in J$ and similarly for $\sfV_{I_s}$, and $\otimes_q$ denotes the tensor product with braiding given by the matrix $q$. 

We now deform the cycle-free factor $\mathsf{S}_q(\sfV^*_J)$, by taking the algebra $\mathsf{A}_{q,\Sigma_J}$ constructed in \autoref{thm:quantumTorelliNoCycles}, and considering its subalgebra $\sfA^J_{q,\Sigma_J,\varepsilon_0}$  generated by $x_i, i \in J$. Similarly, we deform the cycle factors $\mathsf{S}_q(\sfV^*_{I_s})$ to the corresponding Feigin--Odesskii algebras. We obtain the following

\begin{theorem}\label{thm:DeformCyclesAndChains}
The algebra
$$
\sfA_{q,\varepsilon_0,\ldots,\varepsilon_m}^\Sigma := \sfA_{q,\Sigma_J,\varepsilon_0}^J \otimes_q Q_{n_1,k_1}(\varepsilon_1,\Ept) \otimes_q ... \otimes_q Q_{n_m,k_m}(\varepsilon_m,\Ept),
$$
for $\varepsilon_0,\ldots,\varepsilon_m \in \bC$ is a quadratic algebra having the Hilbert series of a polynomial  ring whenever each of its factors does.  In particular, it gives a flat deformation of $\sfA_q$ over $\bC[\![\varepsilon_0,\ldots,\varepsilon_m]\!]$.  Thus non-toric deformations of $\sfA_q$ are jointly unobstructed.
\end{theorem}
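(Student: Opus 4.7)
The plan is to verify the three claims in turn: (i) $\sfA^\Sigma_{q,\varepsilon_0,\ldots,\varepsilon_m}$ is a well-defined quadratic algebra, (ii) its Hilbert series equals the product of the Hilbert series of the factors and hence matches a polynomial ring when each factor does, and (iii) it is flat in the deformation parameters. The crucial point is (i), which turns on the fact that the braided tensor product with braiding $q$ respects the relations of each factor; the other two claims will follow essentially formally.

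For (i), I would first observe that by \autoref{lem:subcycles}(3), for each cycle vertex set $I_s$ and each $j \notin I_s$, the scalars $q_{ij}$ are independent of $i\in I_s$. In particular, conjugation by $x_j$ acts on the subspace $\sfV^*_{I_s}\otimes \sfV^*_{I_s}$ by a single scalar, the torus weight on that isotypic piece for the diagonal $\bCx$ sitting in the $I_s$-torus. The Feigin--Odesskii relations $\mathsf{rel}^{n_s k_s}_{ij}$ are homogeneous of this weight, so conjugation by $x_j$ multiplies each relation by the same scalar, hence preserves it. The analogous statement for the cycle-free block follows from \autoref{thm:quantumTorelliNoCycles}: the deformed relations $r_{ab}(\varepsilon)$ from \eqref{eq:deformedRelationsNoCycles} are homogeneous for the full torus action (each term has weight $-\mathrm{e}_a - \mathrm{e}_b$ modulo $\Theta_\bullet$, but the entire relation sits in a single isotypic component of the diagonal $\bCx$ relative to any external index). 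Combining these observations with the standard formalism of cocycle/braided tensor products for graded algebras in the symmetric monoidal category of $\bZ^n$-graded vector spaces, I would conclude that $\sfA^\Sigma_{q,\varepsilon_0,\ldots,\varepsilon_m}$ is a well-defined associative, quadratic algebra.

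For (ii), once the braided tensor product is defined, it is isomorphic as a graded vector space (and as a bigraded $\bk$-module) to the ordinary tensor product of the underlying vector spaces of its factors, because the braiding only twists the multiplication by scalars, not the underlying vector-space structure. Therefore the Hilbert series is multiplicative, and since $\sum_s n_s + |J| = n$, the product of polynomial-ring Hilbert series in $n_s$ (respectively $|J|$) variables is the Hilbert series of the polynomial ring in $n$ variables, as required. When $\varepsilon_0=\cdots=\varepsilon_m = 0$, the algebra specializes to the braided tensor product of the undeformed $q$-symmetric factors, which is exactly $\sfA_q$ itself.

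Finally, (iii) follows because flatness of each tensor factor over its parameter ring $\bC[\![\varepsilon_s]\!]$ implies flatness of the braided tensor product over $\bC[\![\varepsilon_0,\ldots,\varepsilon_m]\!]$: indeed the braiding is $\bC[\![\varepsilon_0,\ldots,\varepsilon_m]\!]$-linear and preserves each graded component's module structure. The assertion that non-toric deformations are jointly unobstructed is then immediate: the first-order deformation classes coming from the smoothable edges in $\Sigma$ are exactly those produced by combining the first-order terms of the cycle-free factor (handled by \autoref{thm:quantumTorelliNoCycles}) with the first-order terms of the Feigin--Odesskii algebras (handled by \autoref{prop:FODeformOfToric}), and the braided tensor product realizes all of them simultaneously as genuine analytic deformations. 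The main technical obstacle in the argument is really the verification that all the relevant weights are uniform across cycle blocks — that is, the linear-algebraic content of \autoref{lem:subcycles}(3) — without which the braided tensor product would fail to be associative.
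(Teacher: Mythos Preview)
Your outline matches the paper's approach closely, and your treatment of the cycle factors via \autoref{lem:subcycles}(3) is exactly right. The one genuine weak spot is your justification for why the cycle-free relations $r_{ab}(\varepsilon_0)$, $a,b\in J$, commute correctly with $x_c$ for $c\notin J$. You assert that ``the entire relation sits in a single isotypic component of the diagonal $\bCx$ relative to any external index,'' but this is not literally true as a statement about torus weights: the different monomials $x_kx_\ell$ appearing in $r_{ab}$ carry \emph{different} $(\bCx)^n$-weights (they differ by nonzero sums of smoothable weights), so the relation is not homogeneous for the one-parameter subgroup at index $c$. What is actually required is the multiplicative identity $q_{ck}q_{c\ell}=q_{ca}q_{cb}$ for every term, and this is precisely where the paper invokes the Hochschild-contributing property of the smoothable weights: since each smoothable weight $\boldsymbol{\theta}$ for an edge in $\Sigma_J$ has its two $-1$ entries at vertices in $J$, one has $\theta_c\ge 0$ for $c\notin J$, and then \eqref{eq:HHcontrib} yields $\prod_j q_{cj}^{\theta_j}=1$, hence $q_{ck}q_{c\ell}=q_{ca}q_{cb}$ as needed. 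This step is not a formality and should be stated.

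You should also make explicit the role of \autoref{lem:subcycles}(1), which guarantees that the deformed relations of each tensor factor involve only the variables of that factor; without it the subalgebra $\sfA^J_{q,\Sigma_J,\varepsilon_0}$ would not be presented by the $r_{ab}$ with $a,b\in J$ alone, and the braided tensor product description would not even get off the ground. Your parts (ii) and (iii) are fine and essentially what the paper leaves implicit.
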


\begin{proof} 
Once the flatness is established, the statement about unobstructedness is immediate, since the smoothable edges index a basis for the space of non-toric infinitesimal deformations and every non-toric infinitesimal deformation is equivalent, via the scaling action of $(\mathbb{C}^\times)^n$, to one where the edges within the same cycle have identical coefficients.

By \autoref{lem:subcycles} part (1), the deformed relations for each factor involve only the variable of each factor, so we only need to show that the braided tensor product $\otimes_q$ remains correctly defined after the deformation, i.e.~we need to show that the relations of the factors generate a two-sided ideal for the braided tensor product.

For $a,b \in J$, consider the relation $r_{ba} = - x_bx_a +q_{ba}x_ax_b + \sum_{m,k,\ell} \varepsilon_0^m C_{ba}^{k\ell} x_kx_\ell$ of the form \eqref{eq:deformedRelationsNoCycles} for the factor $\sfA_{q,\Sigma_J,\varepsilon_0}^J$.  We claim that for each $c \notin J$, we have $x_c r_{ba} = q_{cb} q_{ca} r_{ba} x_c$. Indeed, for each term $x_k x_\ell$ appearing in $r_{ba}$ the torus weight $-\mathrm{e}_a -\mathrm{e}_b + \mathrm{e}_k + \mathrm{e}_\ell$ is a linear combination of smoothable weights. Since each such a smoothable weight ${\bf w}$ is Hochschild-contributing, we have $\prod_{i=0}^{n-1} q_{ci}^{w_i} = 1$. This implies $q_{cb}q_{ca} = q_{ck}q_{c\ell}$. Therefore, $x_c r_{ba} = q_{cb} q_{ca} r_{ba} x_c$ as desired.

Similarly, suppose that $1 \leq s \leq m$ and $c \notin I_s$.  Then by \autoref{lem:subcycles} part (3), there exists a constant $q_c$ such that $x_cx_a = q_c x_a x_c$ for all $a \in I_s$.  Consequently, if $r$ is any quadratic relation for $Q_{n_s,k_s}(\varepsilon_s,\Ept)$, we have $x_c r = q_c^2 r x_c$.
\end{proof}

\begin{proof}[Proof of \autoref{lem:subcycles}]
Without loss of generality, we may assume that $I = \{0,\ldots,m-1\}$ and the cycle of edges is given by
\[
0 \edge 1 \edge \cdots \edge m-1 \edge 0.
\]
Let $b$ be the biresidue matrix of $\ps$.  By \autoref{lm:pos_rat_valency} part (2), we may assume that the entries $b_{i,i+1}$, $i\in I$, as well as $b_{m-1,0}$, are all positive reals.  We claim that 
\begin{align}
b_{i_1j}=b_{i_2j} \qquad \textrm{ for all }i_1,i_2 \in I \not \ni j  \label{eq:b-cycle-condition}
\end{align}
Indeed, smoothability of the edges $i\edge(i+1)$ for $0 \leq i \leq m-2$, and $(m-1)\edge 0$, implies that for each $j\notin I$ we have the cycle of inequalities $b_{0j} \le b_{1j} \le b_{2j}\le ... \le b_{m-1,j} \le b_{0j}$; hence all of these inequalities must be equalities, as desired.  

From \eqref{eq:b-cycle-condition}, we deduce statement (1), as follows.  Firstly, if an angle $i_1 \edge j \edge i_2$ is colored, then $b_{i_1j}+b_{ji_2}$ is nonzero, but \eqref{eq:b-cycle-condition} and $b$ being alternating ensure that this cannot happen if $i_1,i_2 \in I \not\ni j$, so the  smoothable edges inside $I$ have their colored angles also inside $I$.  On the other hand, suppose that  $j_1\edge j_2$, $j_1,j_2\notin I$ is a smoothable edge outside $I$.  We claim that no angle of the form $j_1\edge i \edge j_2$ with $i \in I$ can be colored.  Indeed, by \eqref{eq:b-cycle-condition}, the numbers $b_{j_1i},b_{j_2,i}$ are independent of $i$, so if one such angle is colored, they all must be.  But it impossible for them all to be colored, because the cycle has length $m \ge 3$, and a given smoothable edge cannot have more than two colored angles.  Hence none of them are colored, as claimed.
 
Now let $b' = (b_{ij})_{i,j \in I}$ be the submatrix indexed by the cycle.  We claim that $b'$ is normalized of corank one.  Indeed, since $b$ is normalized, we have for every $i \in I$ that the $i$th row sum of $b'$ is given by
$$
\sum_{j\in I} b_{ij} = - \sum_{j\notin I} b_{ij}.
$$
Applying \eqref{eq:b-cycle-condition} to the right hand side, we deduce that the row sum of $b'$ is equal to a constant $C$ that is independent of $i$.  Averaging over all $i\in I$ and using the fact that $b$ is alternating, we get
\[
0 = \tfrac{1}{m}\sum_{i,j \in I}b_{ij} = - C,
\]
and hence $b'$ is normalized.  To see that the corank is one,  let $\Delta=\{(z_0,...,z_{n-1})\in\mathbb{C}^n:z_0+...+z_{n-1}=0\}$, and let $\bC^I \subset \bC^n$ be the subspace spanned be the basis vectors labeled by $I$.   Then, by definition, $b:\Delta\to \Delta$ is the inverse of $\ps:\Delta\to \Delta$.  Moreover \eqref{eq:b-cycle-condition} implies that $b(\Delta\cap \mathbb{C}^I)=\Delta \cap \mathbb{C}^I$. Since $b:\Delta\to \Delta$ is invertible, its restriction to the invariant subspace $\Delta\cap \mathbb{C}^I$ is invertible, too. This implies that $b'$ has corank one.

Considering the matrix entries, we deduce that $b'$ is the biresidue matrix of the submatrix $\ps' = (\ps_{ij})_{i,j\in I}$ and hence statement (2) follows.  Considering the action of $\ps$ on the vectors ${\rm e}_{i_1}-{\rm e}_{i_2}$, $i_1,i_2\in I$, we use   $\ps(\Delta\cap \mathbb{C}^I)=\Delta\cap \mathbb{C}^I$ to obtain statement (3).
\end{proof}

\section{Homological properties of the deformed algebras}

 Recall that if $q$ is a multiplicatively alternating matrix, the corresponding algebra $\mathsf{A}_q=\mathsf{S}_q(\mathsf{V}^*)$ is Koszul (\autoref{lm:qPolyKoszul}), and is furthermore Calabi--Yau if and only if $q$ is normalized (\autoref{lm:qPolyCY}). We now show that these properties hold also for the deformed algebras constructed from smoothing diagrams in \autoref{thm:DeformCyclesAndChains} in the previous section (except possibly for countably  many values of the deformation parameter in the presence of cycles).  Since the properties of being Koszul or twisted Calabi--Yau are preserved by braided tensor products of graded algebras, the problem reduces to the case of cycle-free deformations, and the Feigin--Odesskii algebras.  For the latter, the following is known:
\begin{theorem}[Chirvasitu--Kanda--Smith~\cite{Chirvasitu2023}]
For fixed $\varepsilon$, the Feigin--Odesskii algebras $Q_{n,k}(\varepsilon,\Ept)$ are Koszul and twisted Calabi-Yau  for all but at most countably many $[z] \in \bC/\Lambda$.
\end{theorem}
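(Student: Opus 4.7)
The strategy is to view the algebras $Q_{n,k}(\varepsilon,z)$, for fixed $\varepsilon$, as a flat family of graded quadratic algebras parametrized by $[z]$ varying in an open subset of the elliptic curve $E = \bC/\Lambda$, and then to argue that both Koszulness and the twisted Calabi--Yau property hold off a countable subset of $E$ once the Hilbert series has been pinned down. The two-step division of labor is: (i)~control the Hilbert series on a cocountable subset of $E$, and (ii)~deduce Koszulness and the twisted Calabi--Yau property by semicontinuity arguments in a flat family with fixed Hilbert series.

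For step (i), I would assemble the relations \eqref{eq:FO_explicit_formula} (cleared of denominators) into a subsheaf $\mathcal{R} \subseteq \mathcal{O}_E \otimes (\sfV^*)^{\otimes 2}$ depending holomorphically on $[z]$, define $\mathcal{Q} = T\sfV^* \otimes \mathcal{O}_E / (\mathcal{R})$ as a sheaf of graded algebras, and note that the graded pieces $\mathcal{Q}_m$ are coherent sheaves whose fiber dimensions are upper semicontinuous in $[z]$. Since each locus on which $\dim \mathcal{Q}_m([z])$ exceeds $\binom{m+n-1}{n-1}$ is Zariski closed in $E$, it is either all of $E$ or finite. To rule out the former alternative, one exhibits a single $[z_0]$ at which the Hilbert series is correct; the most natural source is the toric degeneration of \autoref{prop:FODeformOfToric}, though this requires working in a two-parameter family over $(\varepsilon,[z])$ and transferring the information to fixed $\varepsilon$ via flatness along the $\varepsilon$-axis, essentially what is carried out in \cite{Chirvasitu2021,Tate1996}.

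For step (ii), once the Hilbert series is fixed on a cocountable open set $U \subseteq E$, Koszulness is equivalent to exactness of the Koszul complex $\mathcal{Q}^{\ac} \otimes_{\mathcal{O}_U} \mathcal{Q} \to \mathcal{Q}$ in each cohomological degree $m$. In a flat family with fixed Hilbert series, exactness at each $m$ is the nonvanishing of the determinant of a morphism between coherent sheaves of the same generic rank, hence cuts out a countable intersection of Zariski open loci; the complement is then countable in $U$. At a point where the toric limit applies, \autoref{lm:qPolyKoszul} shows that Koszulness holds, so these open sets are nonempty. Twisted Calabi--Yau admits a parallel treatment: once the Hilbert series is polynomial, it is equivalent to the quadratic dual $\mathcal{Q}^{!}$ being a Frobenius algebra, i.e.~to the nondegeneracy of the top-degree multiplication pairing, which is the nonvanishing of a single determinant in the family; this determinant does not vanish at the toric point by \autoref{lm:qPolyCY}, so it is nonzero on a Zariski-open complement in $U$.

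The hard part is step (i), i.e.~showing that the Hilbert series is correct for cocountably many $[z]$. The semicontinuity argument alone is cheap, but exhibiting one good fiber is genuinely nontrivial, because the toric limit of \autoref{prop:FODeformOfToric} degenerates both $\varepsilon$ and, implicitly via the parameter $\tau$, deforms the elliptic curve itself. The careful input from the theory of theta functions, including asymptotic estimates of the form used to derive \eqref{eq:thetaAlphaAsympt}--\eqref{eq:FOCoeffAsympt}, together with a PBW-style normal form in each graded component, appears unavoidable; this is precisely what occupies the body of \cite{Chirvasitu2021}, and I would rely on it rather than reprove it. Once (i) is in hand, step (ii) is essentially formal.
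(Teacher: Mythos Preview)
The paper does not prove this theorem; it is stated as a result of Chirvasitu--Kanda--Smith and simply cited from \cite{Chirvasitu2023}, with no argument given. There is therefore no proof in the paper to compare your proposal against.

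That said, a brief comment on your sketch: your two-step plan is reasonable in outline, but the use of the toric limit in step (ii) has the same difficulty you already flagged in step (i). For fixed $\varepsilon \neq 0$, there is no value of $[z]$ at which $Q_{n,k}(\varepsilon,z)$ becomes the $q$-symmetric algebra, so you cannot invoke \autoref{lm:qPolyKoszul} or \autoref{lm:qPolyCY} directly at a point of the one-parameter family over $E$. The clean way to run the argument is to work in the two-parameter family over $(\varepsilon,[z])$ from the outset (with an appropriate compactification or limit at $\varepsilon=0$), show that the locus where the Hilbert series jumps, Koszulness fails, or the Frobenius pairing degenerates is a countable union of proper Zariski-closed subsets of this surface, and then observe that for each fixed $\varepsilon$ this intersects the fibre in at most countably many points---unless the entire fibre lies in the bad locus, which is itself a Zariski-closed condition on $\varepsilon$ and must be excluded separately. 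You seem to recognise this for step (i) but then lapse in step (ii); the fix is the same in both cases, and as you say, the substantive input is the theta-function analysis of \cite{Chirvasitu2021}.
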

We therefore focus on the cycle-free case from now on.  We thus fix a biresidue matrix $b$ inverse to a generic normalized alternating matrix $\ps$, and a subset $I \subset \{0,\ldots,n-2\}$ defining a cycle-free collection of smoothable edges $i \edge (i+1)$, $i \in I$ as in \autoref{sec:nocycles}. We will establish the following.
\begin{theorem}
Let $\sfA_{q,I}$ be a cycle-free deformation of $\sfA_q$ as in \autoref{thm:quantumTorelliNoCycles}.  Then $\sfA_{q,I}$ is Koszul and Calabi--Yau of dimension $n$.
\end{theorem}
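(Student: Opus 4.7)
My plan is to establish Koszulness and the Calabi--Yau property by deforming the explicit superpotential presentation of $\sfA_q$, and then exploiting the filtered deformation structure $\gr_\mathcal{F}\sfA_{q,I}\cong \sfA_q$ inherent in \autoref{thm:quantumTorelliNoCycles}.

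\textbf{Step 1: deformed superpotential.} The algebra $\sfA_q$, being Calabi--Yau by \autoref{lm:qPolyCY}, is presented via a genuinely twisted-cyclic superpotential $\omega_0 \in (\sfV^*)^{\otimes n}$, namely the $q$-antisymmetrization of $x_0\otimes\cdots\otimes x_{n-1}$; its iterated cyclic partial derivatives recover the relations $x_bx_a - q_{ba}x_ax_b$. I would construct a cyclic-invariant deformation $\omega_\varepsilon = \omega_0 + \sum_{m\ge 1}\varepsilon^m\omega_m \in (\sfV^*)^{\otimes n}$ whose cyclic partial derivatives yield the deformed relations $r_{ab}(\varepsilon)$ of \eqref{eq:deformedRelationsNoCycles}. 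The construction proceeds inductively on $m$: having arranged that the $m$-th partial sum produces the deformed relations modulo $\varepsilon^{m+1}$, the cocycle obstructing the extension lies in a torus-weight component labelled by $\Theta_m$, and vanishes for $m\ge 2$ by combining \autoref{lm:smoothableWeightLinIndep} with \autoref{lm:avoidingObstructedWeights}. The $m=1$ term is constructed explicitly from the colored angles of each smoothable edge in the smoothing diagram.

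\textbf{Step 2: Koszulness.} The superpotential $\omega_\varepsilon$ gives rise to a bimodule complex $K_\bullet(\sfA_{q,I})$ with terms $\sfA_{q,I}\otimes (\sfA_{q,I}^{\ac})_m\otimes \sfA_{q,I}$ whose differentials are defined as in the Koszul resolution of \autoref{lm:qPolyKoszul} but with respect to the deformed relations. The filtration inherited from $\mathcal{F}$ makes the associated graded complex equal to the bimodule Koszul resolution of $\sfA_q$, which is acyclic in positive degrees. Since $\bigcap_a \mathcal{F}_a=0$ and since each graded piece of $\sfA_{q,I}$ has only finitely many nontrivial filtration steps (ensured by $\min\limits_{0\neq f\in\mathcal{F}_a}\mathrm{deg}(f)\to\infty$ from \autoref{lm:smoothableWeightLinIndep}), the filtration spectral sequence converges. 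Acyclicity of the associated graded then forces $K_\bullet(\sfA_{q,I})$ to be a bimodule resolution of $\sfA_{q,I}$, which specializes (by $\otimes_{\sfA_{q,I}}\bk$) to a minimal Koszul resolution of the trivial module, establishing Koszulness.

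\textbf{Step 3: Calabi--Yau property.} By cyclic invariance of $\omega_\varepsilon$, the natural pairing with $\omega_\varepsilon$ identifies ${\rm Hom}_{\sfA_{q,I}\text{-bimod}}(K_\bullet, \sfA_{q,I}\otimes\sfA_{q,I})$ with $K_\bullet[-n]$ twisted by a Nakayama automorphism. The associated-graded Nakayama on $\sfA_q$ is the identity (since $q$ is normalized, \autoref{lm:qPolyCY}), and the filtration $\mathcal{F}$ restricted to $\sfV^*$ is trivial (no $x_i$ lies in $\mathcal{F}_{\ge1}$, as each $e_i$ cannot be expressed as an element of $\Theta_{\geq 1}+\bZ_{\ge0}^n$). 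Since the Nakayama of $\sfA_{q,I}$ is a graded automorphism whose action on $\sfV^*$ agrees with that on the associated graded, it must be the identity. This yields the untwisted Calabi--Yau isomorphism $\mathrm{Ext}^\bullet_{\sfA_{q,I}\text{-bimod}}(\sfA_{q,I},\sfA_{q,I}\otimes\sfA_{q,I})\cong \sfA_{q,I}[-n]$.

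The main obstacle will be \textbf{Step 1}, constructing the cyclically-symmetric deformed superpotential $\omega_\varepsilon$ whose cyclic derivatives reproduce the uniquely-determined coefficients $C_{ba}^{k\ell}$ of \autoref{thm:quantumTorelliNoCycles}. The uniqueness of the deformed relations already relied on careful weight analysis; promoting this to a coherent twisted-cyclic tensor imposes simultaneous constraints across all cyclic rotations of the correction terms at every order. The same torus-weight considerations --- in particular the avoidance of $\Theta_m$-type obstructions from \autoref{lm:avoidingObstructedWeights} combined with the linear independence from \autoref{lm:smoothableWeightLinIndep} --- are what I expect to carry the argument through.
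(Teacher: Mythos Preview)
Your overall architecture---superpotential plus filtered deformation---is close in spirit to the paper's, but the order of operations differs and there is a genuine gap in Step~3.

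\textbf{Comparison of approaches.} The paper proves Koszulness \emph{first}, by showing that the relations $r_{ba}(\varepsilon)$ form a quadratic Gr\"obner basis for the ordering induced by the filtration $\mathcal{F}$; this is shorter than a spectral-sequence argument and does not depend on having a superpotential in hand. Once Koszulness is known, the paper \emph{extracts} the superpotential as the top Koszul syzygy $(\mathsf{A}_{q,I,\varepsilon})^{\ac}_n$, which is automatically one-dimensional; this neatly bypasses your Step~1 and its unspecified obstruction theory for building a cyclically-invariant tensor order by order. The remaining work is then purely to verify cyclic invariance of this syzygy $\Phi_\varepsilon$, which is exactly your Step~3 in different language.

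\textbf{The gap.} Your argument that the Nakayama automorphism is trivial rests on the claim that $\mathcal{F}_{\ge 1}\cap \sfV^*=0$, i.e.\ that no $e_i$ lies in $\Theta_{\ge 1}+\bZ_{\ge 0}^n$. This is equivalent to asserting that no weight of the form $e_a-e_b$ lies in any $\Theta_m$, $m\ge 1$---but \autoref{thm:quantumTorelliNoCycles} explicitly singles this out as an \emph{additional hypothesis} that may fail (it is needed there only for uniqueness of the coefficients $C_{ba}^{k\ell}$, not for existence). So in general the filtration on $\sfV^*$ is \emph{not} trivial, and a filtered graded automorphism reducing to the identity on the associated graded could a priori move $x_a$ to $x_a+\alpha x_b+\cdots$. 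The paper closes this gap with a dedicated lemma (\autoref{lm:filteredAuto}): given such a putative automorphism $T_\varepsilon$ with $T_\varepsilon(x_a)=x_a+\varepsilon^k\alpha_b x_b+\cdots$, one uses the genericity of $\ps$ (specifically, that $e_a-e_b$ is never Poisson/Hochschild-contributing, from the $\HP^1$ computation in \autoref{subsec:smoothingDiagrams}) to find an index $c$ with $q_{ca}\ne q_{cb}$, and then applying $T_\varepsilon$ to the relation $r_{ac}(\varepsilon)$ produces a term $\alpha_b(q_{ca}-q_{cb})x_bx_c$ that cannot be cancelled. Your Step~3 needs this argument, or something equivalent, in place of the filtration-triviality claim.
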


\subsection{Cycle-free deformations are Koszul}\label{sec:Koszul}

To establish the Koszul property, we use the notion of a Gr\"{o}bner basis for a two-sided ideal $J$ of the free non-commutative algebra $\mathsf{T}(\mathsf{V}^*)=\mathbb{C}\langle x_0,...,x_{n-1}\rangle$, which we now recall.

Choose a well-ordering $<$ on the set of monomials in $x_0,...,x_{n-1}$ that is compatible with multiplication, in the sense that $t_1<t_2$ implies $lt_1r<lt_2r$ for any monomials $r,l,t_1,t_2$. For an element $f\in \mathbb{C}\langle x_0,...,x_{n-1}\rangle$, we denote by $LT(f)$ be the leading term of $f$ with respect to $<$. We say that a monomial $t_1$ is divisible by a monomial $t_2$, or that $t_2$ divides $t_1$, if $t_1=lt_2r$ for some monomials $l,r$. Given a two-sided ideal $J$ of $\mathsf{T}(\mathsf{V}^*)$, a subset $G\subset J$ is called a \textit{Gr\"{o}bner basis} for $J$ with respect to $<$ if for every $f\in J$, the leading term $LT(f)$ is divisible by $LT(g)$, for some $g\in G$.

Let $i\edge (i+1)$, $i\in I$ be a cycle-free set of smoothable edges for a biresidue matrix $b$. Let $\mathsf{A}_{q,I}=\mathsf{T}(\mathsf{V}^*)/J_{q,I}$ be the filtered deformation of $\mathsf{A}_q=\mathsf{S}_q(\mathsf{V}^*)$ constructed in \autoref{thm:quantumTorelliNoCycles}. The two-sided ideal $J_{q,I}$ is generated by the quadratic elements
\begin{equation}
\label{eq:deformedRelsNoCycle}
r_{ba}(\varepsilon) = -x_b x_a + q_{ba} x_a x_b + \sum_{m=1}^N \varepsilon^m \left(\sum_{\substack{-\mathrm{e}_a-\mathrm{e}_b+\mathrm{e}_k+\mathrm{e}_\ell \in \Theta_m\\k\le \ell}} C_{ba}^{k\ell} x_k x_\ell\right),~~~a<b.
\end{equation}
We define a well-ordering $<$ on the monomials in $\mathsf{T}(\mathsf{V}^*)$ using the smoothable filtration $\mathcal{F}_\bullet$ defined in \eqref{eq:smoothableFiltration}, as follows.
For two monomials $t_1,t_2\in\mathsf{T}(\mathsf{V}^*)$, we say that $t_1<t_2$ if

\begin{itemize}
\item[-] $\max(a:t_1\in \mathcal{F}_a) > \max(a:t_2\in\mathcal{F}_a)$, or
\item[-] $\max(a:t_1\in \mathcal{F}_a) = \max(a:t_2\in\mathcal{F}_a)$ and $t_1$ is smaller than $t_2$ degree-lexicographically.
\end{itemize}

\begin{proposition}\label{prop:deformedKoszulNoCycle}
The following statements hold: 
\begin{enumerate}
\item The relations $r_{ba}(\varepsilon)$, $b>a$, in \eqref{eq:deformedRelsNoCycle} form a Gr\"{o}bner basis  for $J_{q,I}$ with respect to the ordering $<$.

\item The algebra $\mathsf{A}_{q,I}$ is Koszul.
\end{enumerate}
\end{proposition}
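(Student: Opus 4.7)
The proof splits naturally: for part (1), the plan is to identify the leading term of each relation $r_{ba}(\varepsilon)$ and then invoke the flatness from \autoref{thm:quantumTorelliNoCycles} to deduce the Gr\"obner basis property; for part (2), the plan is to apply the classical result that a quadratic algebra admitting a quadratic Gr\"obner basis is Koszul.

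First I would verify that $LT(r_{ba}(\varepsilon)) = x_bx_a$. Of the monomials appearing in $r_{ba}$, the terms $x_bx_a$ and $x_ax_b$ share the weight $\mathrm{e}_a + \mathrm{e}_b$ and therefore lie in the same filtration piece; let $j$ be the largest index with $\mathrm{e}_a + \mathrm{e}_b \in \Theta_j + \mathbb{Z}_{\geq 0}^n$, and fix a decomposition $\mathrm{e}_a + \mathrm{e}_b = \theta' + \mathbf{v}$ with $\theta' \in \Theta_j$ and $\mathbf{v} \in \mathbb{Z}_{\geq 0}^n$. By construction of \eqref{eq:deformedRelsNoCycle}, every correction monomial $x_kx_\ell$ appearing at order $\varepsilon^m$ has weight $\mathrm{e}_k + \mathrm{e}_\ell = \mathrm{e}_a + \mathrm{e}_b + \theta$ for some $\theta \in \Theta_m$, so $\mathrm{e}_k + \mathrm{e}_\ell = (\theta' + \theta) + \mathbf{v} \in \Theta_{j+m} + \mathbb{Z}_{\geq 0}^n$. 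This places $x_kx_\ell$ strictly deeper in the filtration than $x_bx_a$, hence strictly smaller in the ordering $<$. Among the tied $x_bx_a$ and $x_ax_b$, the degree-lexicographic tiebreaker (with $x_0 < \cdots < x_{n-1}$) singles out $x_bx_a$, since $b > a$.

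The standard monomials with respect to the leading-term set $\{x_bx_a : b > a\}$ are exactly the ordered monomials $\{x_{i_1}\cdots x_{i_p} : i_1 \leq \cdots \leq i_p\}$, which by \autoref{thm:quantumTorelliNoCycles} form a basis of $\mathsf{A}_{q,I}$. The standard characterization---a subset $G$ of an ideal $J$ is a Gr\"obner basis if and only if the monomials not divisible by any leading term of an element of $G$ descend to a linear basis of $\mathsf{T}(\mathsf{V}^*)/J$---then yields part (1). For part (2), I would invoke the theorem (due to Priddy, with later refinements by Anick, Backelin, Polishchuk--Positselski, and others) that a quadratic algebra admitting a quadratic Gr\"obner basis of relations is Koszul; equivalently, the initial algebra $\mathsf{T}(\mathsf{V}^*)/(x_bx_a : b > a)$ is a quadratic monomial algebra, hence Koszul by Fr\"oberg's theorem, and Koszulness transfers from it to the deformation $\mathsf{A}_{q,I}$. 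The one delicate point in this plan is the filtration computation pinning down $LT(r_{ba})$; once that is in place, the rest is standard Gr\"obner--Koszul machinery applied to the flatness result already in hand from \autoref{thm:quantumTorelliNoCycles}.
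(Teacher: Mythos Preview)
Your proof is correct and takes essentially the same approach as the paper: both identify $LT(r_{ba})=x_bx_a$ via the filtration, both rely on the flatness of the deformation established in \autoref{thm:quantumTorelliNoCycles}, and both deduce Koszulness from the standard ``quadratic Gr\"obner basis $\Rightarrow$ Koszul'' result (the paper cites \cite[Theorem~3.iii)]{Green1995}). The only cosmetic difference is in packaging part~(1): you invoke the standard-monomials criterion (ordered monomials are a basis by flatness, hence the generators form a Gr\"obner basis), whereas the paper argues directly that for any $f\in J_{q,I}$, the part of $f$ in the top filtration layer lies in the undeformed ideal of $\mathsf{A}_q$ and hence has leading term divisible by some $x_bx_a$.
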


\begin{proof}
(1) Given any $f\in J_{q,I}$, let us prove that $LT(f)$ is divisible by $x_bx_a=-LT(r_{ba}(\varepsilon))$, for some $b>a$. Choose $a\ge0$ such that $f\in\mathcal{F}_a\setminus \mathcal{F}_{a+1}$. We can uniquely express $f=f_1+f_2$ so that each term in $f_1$ belongs to $\mathcal{F}_a\setminus \mathcal{F}_{a+1}$ and $f_2\in \mathcal{F}_{a+1}$. The choice of ordering guarantees $LT(f)=LT(f_1)$. Since $\mathsf{A}_{q,I}$ is a filtered deformation of $\mathsf{A}_q$, the element $f_1$ belongs to the two-sided ideal generated by $r_{ba}(0)$, $a<b$. Hence $LT(f_1)$ is divisible by some $x_bx_a$, $b>a$. 

(2) The Gr\"{o}bner basis $r_{ba}(\varepsilon)$, $b>a$, consists of quadratic elements. Hence the claim follows from \cite[Theorem 3.iii)]{Green1995}
\end{proof}

\subsection{Cycle-free deformations are Calabi--Yau}\label{sec:CY}

To establish the Calabi--Yau property, we use the theory of superpotentials.  Recall that a \textit{superpotential} of degree $n$ on a vector space $\sfV$ is an element $\Phi \in (\mathsf{V}^*)^{\otimes n}$ that is invariant under the action of $\mathbb{Z}/n\mathbb{Z}$ generated by
\begin{equation}\label{eq:ZnAction}
x_{i_1} \otimes x_{i_2} \otimes ... \otimes x_{i_n} \longmapsto (-1)^{n-1} x_{i_2} \otimes \cdots \otimes x_{i_n} \otimes x_{i_1}.
\end{equation}
Given a superpotential $\Phi$ of degree $n$, one can construct a quadratic algebra $\mathsf{A}=\mathcal{D}(\Phi)$ with the relations
$$
\partial_{x_{i_1}} ... \partial_{x_{i_{n-2}}} \Phi, ~~~~i_1,...,i_{n-2} \in \{0,...,n-1\},
$$
where the operation $\partial_{x_j}$ is defined by
$$
\partial_{x_j} \big( x_{i_1} \otimes x_{i_2} \otimes ... \otimes x_{i_m} \big) = \left\{
\begin{matrix}
x_{i_2} \otimes ... \otimes x_{i_m}, & \text{if } j=i_1 \\
0,& \text{otherwise}.
\end{matrix}
\right.
$$
The  $\mathsf{A}$-bimodule $\mathsf{A}$ fits into into the following complex of projective $\mathsf{A}$-bimodules
$$
\mathcal{W}^\bullet(\Phi):= 0 \rightarrow \mathsf{A}\otimes W_n\otimes \mathsf{A} \xrightarrow{d_n} \mathsf{A}\otimes W_{n-1}\otimes \mathsf{A} \rightarrow ... \rightarrow \mathsf{A}\otimes W_1\otimes \mathsf{A} \xrightarrow{d_1} \mathsf{A}\otimes W_0\otimes \mathsf{A} \rightarrow 0,
$$
where $W_m\subset (\mathsf{V}^*)^{\otimes i}$ is the $\mathbb{C}$-span of $\partial_{x_{i_1}} ... \partial_{x_{i_{n-m}}} \Phi$, $i_1,...,i_{n-m}\in \{0,...,n-1\}$, and
$$
d_m = \varepsilon_m (\mathsf{split}_L + (-1)^m \mathsf{split}_R),
$$
$$
\mathsf{split}_L(a\otimes x_{i_1} x_{i_2}...x_{i_m} \otimes a') = ax_{i_1}\otimes x_{i_2}...x_{i_m} \otimes a',~~~a,a'\in\mathsf{A},
$$
$$
\mathsf{split}_R(a\otimes x_{i_1} x_{i_2}...x_{i_m} \otimes a') = a\otimes x_{i_1} ... x_{i_{m-1}} \otimes x_{i_m} a',~~~a,a'\in\mathsf{A},
$$
$$
\varepsilon_m = 
\left\{
\begin{matrix}
(-1)^{m(n-m)}, & \text{if } m<(n+1)/2 \\
1, & \text{otherwise}.
\end{matrix}
\right.
$$

\begin{theorem}[{Bocklandt--Schedler--Wemyss \cite[Theorem 6.2]{Bocklandt2010}}]
\label{thm:BSW}
A quadratic algebra $\mathsf{A}=\mathsf{T}(\mathsf{V}^*)/J$ is Koszul and Calabi--Yau if and only if it is of the form $\mathcal{D}(\Phi)$ for a superpotential $\Phi$, and the corresponding complex $\mathcal{W}^\bullet(\Phi)$ is a resolution of $\sfA$.
\end{theorem}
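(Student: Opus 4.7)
The plan is to prove the biconditional in two directions, both passing through the duality theory of the Koszul bimodule resolution.

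For the ($\Leftarrow$) direction, assume $\sfA = \mathcal{D}(\Phi)$ and that $\mathcal{W}^\bullet(\Phi)$ is exact. Because each term $\sfA \otimes W_m \otimes \sfA$ is free as a bimodule and the differentials are linear in $\sfV^*$, tensoring with $\bC$ over $\sfA$ on one side yields a linear free resolution of the trivial module, so $\sfA$ is Koszul. For the Calabi--Yau property, I would apply $\operatorname{Hom}_{\sfA\text{-bimod}}(-,\sfA \otimes \sfA)$ to $\mathcal{W}^\bullet(\Phi)$ and establish a canonical isomorphism of complexes
\[
\operatorname{Hom}_{\sfA\text{-bimod}}(\mathcal{W}^\bullet(\Phi), \sfA \otimes \sfA) \;\cong\; \mathcal{W}^{n-\bullet}(\Phi),
\]
constructed from the perfect pairing $W_m \otimes W_{n-m} \to W_n = \bC \cdot \Phi$ given by contraction. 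The $\bZ/n\bZ$-invariance of $\Phi$ under \eqref{eq:ZnAction} is exactly what makes this pairing nondegenerate and compatible with the differentials $d_m$; this is the role of the signs $\varepsilon_m$ and of the $(-1)^m$ inside the expression for $d_m$. Taking cohomology then yields $\Ext^n_{\sfA\text{-bimod}}(\sfA, \sfA \otimes \sfA) \cong \sfA$, with vanishing in other degrees, which is the definition of Calabi--Yau of dimension $n$.

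For the ($\Rightarrow$) direction, suppose $\sfA$ is Koszul and Calabi--Yau of dimension $n$. By Koszulness, $\sfA$ admits the canonical Koszul bimodule resolution $K^\bullet = \sfA \otimes (\sfA^{\ac})_\bullet \otimes \sfA$, whose components satisfy $(\sfA^{\ac})_m \subset (\sfV^*)^{\otimes m}$. The Calabi--Yau property then furnishes a quasi-isomorphism $K^\bullet \simeq \operatorname{Hom}_{\sfA\text{-bimod}}(K^\bullet, \sfA \otimes \sfA)[-n]$, and comparing graded pieces of this self-duality forces $(\sfA^{\ac})_n$ to be one-dimensional, $(\sfA^{\ac})_m = 0$ for $m > n$, and the natural pairing $(\sfA^{\ac})_m \otimes (\sfA^{\ac})_{n-m} \to (\sfA^{\ac})_n$ to be perfect. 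A generator $\Phi \in (\sfA^{\ac})_n \subset (\sfV^*)^{\otimes n}$ is the sought superpotential: its cyclic invariance (with sign $(-1)^{n-1}$) is extracted from the self-duality by tracking how left- and right-multiplication get swapped under $\operatorname{Hom}(-,\sfA\otimes\sfA)$, and the defining relations of $\sfA$ are recovered as $\partial_{x_{i_1}} \cdots \partial_{x_{i_{n-2}}}\Phi$, since by construction $(\sfA^{\ac})_n$ embeds into $R \otimes (\sfV^*)^{\otimes(n-2)}$ with image equal to these contractions. This identifies $\sfA = \mathcal{D}(\Phi)$ and $K^\bullet \cong \mathcal{W}^\bullet(\Phi)$, so the latter is a resolution.

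The main technical obstacle is the sign and cyclic-symmetry bookkeeping: one must verify that the $(-1)^{n-1}$ in \eqref{eq:ZnAction}, the signs $\varepsilon_m$ in $d_m$, and the $(-1)^m$ separating $\mathsf{split}_L$ from $\mathsf{split}_R$ are all mutually consistent and forced by the Calabi--Yau self-duality. A clean organizing principle is to observe that the Calabi--Yau structure endows the Koszul dual algebra $\sfA^!$ with a graded Frobenius algebra structure of top degree $n$, whose Frobenius trace, viewed dually as an element of $(\sfA^{\ac})_n \subset (\sfV^*)^{\otimes n}$, is precisely $\Phi$; cyclic invariance of $\Phi$ then corresponds to the trace property $\operatorname{tr}(xy) = \pm\operatorname{tr}(yx)$, and this abstract reformulation makes the sign arithmetic essentially automatic.
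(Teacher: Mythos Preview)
The paper does not prove this statement: it is quoted as \cite[Theorem 6.2]{Bocklandt2010} and used as a black box in the proof of \autoref{prop:deformedCYNoCycle}. There is therefore no proof in the paper to compare your proposal against.

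That said, your sketch is a reasonable outline of the argument in the cited source. The ($\Leftarrow$) direction is essentially correct as stated: the self-duality of $\mathcal{W}^\bullet(\Phi)$ via the contraction pairing $W_m \otimes W_{n-m} \to \bC\Phi$ is indeed the mechanism, and the cyclic invariance of $\Phi$ is what makes this pairing compatible with the split differentials. For the ($\Rightarrow$) direction, your identification of $\Phi$ as a generator of the one-dimensional top syzygy $(\sfA^{\ac})_n$, with cyclic invariance extracted from the bimodule self-duality (equivalently, from the Frobenius structure on $\sfA^!$), is the right idea. One point to be careful about: you need that the contraction maps $\partial_{x_i}: (\sfA^{\ac})_m \to (\sfA^{\ac})_{m-1}$ are \emph{surjective} for all $m \le n$, so that $W_m = (\sfA^{\ac})_m$ and the complex $\mathcal{W}^\bullet(\Phi)$ really coincides with the full Koszul bimodule resolution; this surjectivity is exactly the nondegeneracy of the pairing, and you should make that implication explicit rather than leaving it implicit in ``comparing graded pieces.''
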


For a normalized, multiplicatively alternating matrix $q=(q_{ij})_{i,j=0}^{n-1}$, the algebra $\mathsf{S}_q(\mathsf{V}^*)$ can be obtained as $\mathcal{D}(\Phi_0)$ for 
$$
\Phi_0 = x_0 \wedge_q x_1 \wedge_q ... \wedge_q x_{n-1} = \sum_{\sigma\in\Sigma_n} \left(\prod_{\substack{i<j:\\ \sigma_i>\sigma_j}} -q_{\sigma_j\sigma_i}\right) x_{\sigma_0} \otimes x_{\sigma_1} \otimes ... \otimes x_{\sigma_{n-1}}.
$$

Let $i\edge (i+1)$, $i\in I$ be a cycle-free set of smoothable edges for a biresidue matrix $b$. Let $\mathsf{A}_{q,I}$ be the filtered deformation of $\mathsf{A}_q=\mathsf{S}_q(\mathsf{V}^*)$ constructed in \autoref{thm:quantumTorelliNoCycles}. For $\varepsilon\in\mathbb{C}$, let $\mathsf{A}_{q,I,\varepsilon}$ be the algebra with relations \eqref{eq:deformedRelationsNoCycles} (by construction, each $\mathsf{A}_{q,I,\varepsilon}$, $\varepsilon\not=0$, is isomorphic to $\mathsf{A}_{q,I}$).

\begin{proposition}\label{prop:deformedCYNoCycle} 
The algebra $\mathsf{A}_{q,I}$ is $n$-Calabi--Yau and Artin--Schelter regular.
\end{proposition}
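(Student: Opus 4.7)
The plan is to apply the Bocklandt--Schedler--Wemyss criterion (\autoref{thm:BSW}): Koszulness is already established in \autoref{prop:deformedKoszulNoCycle}, so it suffices to exhibit a superpotential $\Phi_\varepsilon \in (\sfV^*)^{\otimes n}$ with $\mathcal{D}(\Phi_\varepsilon) = \sfA_{q,I,\varepsilon}$ and to verify that the bimodule complex $\mathcal{W}^\bullet(\Phi_\varepsilon)$ is a resolution of $\sfA_{q,I,\varepsilon}$.

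First I would construct $\Phi_\varepsilon$ as a deformation of $\Phi_0 = x_0 \wedge_q x_1 \wedge_q \cdots \wedge_q x_{n-1}$, which is the superpotential for $\sfA_q$ by \autoref{lm:qPolyCY}. Since $\sfA_{q,I,\varepsilon}$ is Koszul with Hilbert series $(1-t)^{-n}$, the top piece $(\sfA_{q,I,\varepsilon}^{\ac})_n \subset (\sfV^*)^{\otimes n}$ is one-dimensional, containing a unique (up to scalar) lift $\Phi_\varepsilon$ specializing to $\Phi_0$. Normalization of $q$ implies every smoothable weight $\boldsymbol{\theta}_i$ sums to zero, so $\Phi_\varepsilon$ remains homogeneous of total torus weight $(1,\ldots,1)$, and its higher order corrections $\Phi_m$ are supported on multi-indices shifted from $(1,\ldots,1)$ by elements of $\Theta_m$. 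To promote $\Phi_\varepsilon$ to a superpotential, I would show the cyclic shift $\tau$ preserves the line $(\sfA_{q,I,\varepsilon}^{\ac})_n$, using that the deformed relations \eqref{eq:deformedRelsNoCycle} are built from weight-compatible cyclic combinations; then the eigenvalue of $\tau$ on this line varies continuously with $\varepsilon$ through $n$th roots of unity, hence equals its value $(-1)^{n-1}$ at $\varepsilon=0$, so $\Phi_\varepsilon$ may be rescaled to satisfy \eqref{eq:ZnAction}.

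For the identification $\mathcal{D}(\Phi_\varepsilon) = \sfA_{q,I,\varepsilon}$, I would observe that the partial derivatives $\partial_{x_{i_1}}\cdots\partial_{x_{i_{n-2}}}\Phi_\varepsilon$ span the annihilator of $(\sfA_{q,I,\varepsilon}^!)_2$ in $\sfV^*\otimes \sfV^*$ under the canonical pairing, and this annihilator is precisely the space of relations of $\sfA_{q,I,\varepsilon}$. For exactness of $\mathcal{W}^\bullet(\Phi_\varepsilon)$, I would use Koszulness of $\sfA_{q,I,\varepsilon}$: the standard Koszul bimodule complex is a resolution, and a direct comparison shows $W_m = (\sfA_{q,I,\varepsilon}^{\ac})_m$ as subspaces of $(\sfV^*)^{\otimes m}$ (dimensions match via the common Hilbert series), while the differentials $\varepsilon_m(\mathsf{split}_L + (-1)^m\mathsf{split}_R)$ coincide with the Koszul bimodule differentials. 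Artin--Schelter regularity then follows automatically from Koszulness, finite global dimension $n$ (supplied by the Calabi--Yau property), and the correct Hilbert series.

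The main obstacle is the cyclic invariance of $\Phi_\varepsilon$ in the first step: this is a genuine non-commutative unimodularity statement, reflecting the fact that the classical Poisson structure being deformed is itself unimodular (\autoref{lm:unimodularEqualsNormalized}). The argument must carefully exploit the normalization of $q$ and the zero-sum property of the smoothable weights $\boldsymbol{\theta}_i$ to prevent the $\bZ/n\bZ$-eigenvalue of $\tau$ on $(\sfA_{q,I,\varepsilon}^{\ac})_n$ from jumping under deformation; a clean alternative would be to dualize and verify Frobenius property with trivial Nakayama automorphism for the deformed quadratic-dual algebra $\sfA_{q,I,\varepsilon}^!$, which is a flat deformation of the Frobenius algebra $\sfA_q^! = \bigwedge_q \sfV$.
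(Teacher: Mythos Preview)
Your overall strategy matches the paper's: construct $\Phi_\varepsilon$ as the unique generator of the one-dimensional top syzygy $(\sfA_{q,I,\varepsilon}^{\ac})_n$, identify $\mathcal{W}^\bullet(\Phi_\varepsilon)$ with the Koszul bimodule resolution, and then verify cyclic invariance of $\Phi_\varepsilon$ to invoke \autoref{thm:BSW}. The first two steps are fine and essentially identical to what the paper does.

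The gap is in your primary argument for cyclic invariance. Your continuity argument presupposes that the cyclic shift $\tau$ preserves the line $(\sfA_{q,I,\varepsilon}^{\ac})_n$, but this is not automatic: from $\Phi_\varepsilon \in \bigcap_{i=0}^{n-2} (\sfV^*)^{\otimes i}\otimes R_\varepsilon \otimes (\sfV^*)^{\otimes(n-2-i)}$ one only gets $\tau(\Phi_\varepsilon) \in \bigcap_{i=0}^{n-3}(\cdots)$, and membership in $(\sfV^*)^{\otimes(n-2)}\otimes R_\varepsilon$ is precisely the statement that the Nakayama-type twist is \emph{scalar}, which is essentially what you are trying to prove. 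Weight considerations alone do not force this: the relevant weight space in $(\sfA^{\ac}_{q,I,\varepsilon})_{n-1}\otimes \sfV^*$ is $n$-dimensional in the associated graded, not one-dimensional. Your ``clean alternative'' of checking Nakayama triviality for $\sfA_{q,I,\varepsilon}^!$ is the right direction, but you have not indicated how to carry it out. The paper handles this by writing $\Phi_\varepsilon = \sum_i x_i\otimes v_i = (-1)^{n-1}\sum_i v_i \otimes x'_i$, observing that $x_i \mapsto x'_i$ extends to a filtered unipotent graded automorphism $\tau_\varepsilon$ of $\sfA_{q,I,\varepsilon}$ with $\tau_0 = \mathrm{id}$, and then proving a separate lemma (\autoref{lm:filteredAuto}): any such polynomial family of automorphisms must be identically the identity. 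That lemma is the genuine content here; its proof uses the specific structure of the deformed relations together with the fact that no weight of the form $e_a - e_b$ is Hochschild-contributing (equivalently, $\HP^1(\sfB_\ps)^{\bCx}$ has only weight zero), which is exactly the ``non-commutative unimodularity'' input you gestured at but did not supply.
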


\begin{proof} By \cite[Lemma 1.2]{Rogalski2014}, the Artin--Schelter regularity follows from the Calabi-Yau property. Let us show the latter.  By \autoref{prop:deformedKoszulNoCycle}, the algebra $\mathsf{A}_{q,I,\varepsilon}$ is Koszul for each $\varepsilon\in\mathbb{C}$. The filtered deformation  $\mathsf{A}_{q,I,\varepsilon}$ of  $\mathsf{A}_{q}$ induces a filtered deformation of the Koszul resolution. In particular, the top degree Koszul syzygy $(\mathsf{A}_{q,I,\varepsilon})^{\ac}_n\subset(\mathsf{V}^*)^{\otimes n}$ remains one-dimensional for all $\varepsilon$. We choose $\Phi_\varepsilon$ to  the unique basis vector for $(\mathsf{A}_{q,I,\varepsilon})^{\ac}_n$ such that $\Phi_\varepsilon$ is a filtered deformation of $\Phi_0$. 
It follows that the complex $\mathcal{W}^\bullet(\Phi_\varepsilon)$ remains a resolution of $\mathsf{A}_{q,I,\varepsilon}$ for all $\varepsilon$ and $\mathsf{A}_{q,I,\varepsilon} = \mathcal{D}(\Phi_\varepsilon)$. By \cite[Lemma 6.5]{Bocklandt2010}, the complex $\mathcal{W}^\bullet(\Phi_\varepsilon)$ is a subcomplex of the Koszul resolution of $\mathsf{A}_{q,I,\varepsilon}$, and so, having the same dimensions, they have to coincide. To apply \autoref{thm:BSW}, it remains to check that $\Phi_\varepsilon$ is invariant under the action of $\mathbb{Z}/n\mathbb{Z}$ generated by \eqref{eq:ZnAction}.

To this end, let us write the superpotential $\Phi_\varepsilon\in(\mathsf{V}^*)^{\otimes n}$ as
$$
\Phi_\varepsilon = \sum_{i=0}^{n-1} x_i \otimes v_i,
$$
where $v_0, ..., v_{n-1}$ is a basis of ${W}_{n-1}(\Phi_\varepsilon) = (\mathsf{A}_{q,I,\varepsilon})^{\ac}_{n-1}$. Then we also have
$$
\Phi_\varepsilon = (-1)^{n-1} \sum_{i=0}^{n-1} v_i \otimes x'_i,
$$
for some $x'_0, ...., x'_{n-1}\in\mathsf{V}^*$.  We must show that $x_i' = x_i$ for all $i$, or equivalently that the  ``twisting'' of $\Phi_\varepsilon$, defined as  the linear map 
\[
\mapdef{\tau_\varepsilon}{\sfV^*}{\sfV^*}{x_i}{x_i'}
\]
is the identity.  Note that $\tau_\varepsilon$ extends uniquely to a tensor product preserving map $\tau_\varepsilon:T(\mathsf{V}^*)\to T(\mathsf{V}^*)$. One checks that $\tau_\varepsilon$ is a filtered deformation of the identity and preserves the quadratic relations of $\mathsf{A}_{q,I,\varepsilon}$, thus giving a filtered unipotent, graded automorphism of $\mathsf{A}_{q,I,\varepsilon}$. By \autoref{lm:filteredAuto} below, the twisting $\tau_\varepsilon$ must be the identity, as desired.
\end{proof}

\begin{lemma}\label{lm:filteredAuto}
Let $T_\varepsilon\in {\rm Aut}(\mathsf{A}_{q,I,\varepsilon})$, $\varepsilon\in\mathbb{C}$, be a polynomial family of filtered unipotent, graded automorphisms such that $T_0$ is identity. Then $T_\varepsilon$ is identity for all $\varepsilon\in\mathbb{C}$.
\end{lemma}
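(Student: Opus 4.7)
My plan is to argue by contradiction. Suppose $T_\varepsilon\neq \mathrm{id}$, and expand the polynomial family as $T_\varepsilon = \mathrm{id}+\sum_{k\ge 1}\varepsilon^k T^{(k)}$. Let $k$ be the smallest positive integer with $T^{(k)}\neq 0$; the goal is to force $T^{(k)}=0$, contradicting the minimality of $k$.

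My first step will be to show that $T^{(k)}$ is simultaneously a degree-zero graded derivation of the undeformed algebra $\mathsf{A}_q$ and an operator that strictly increases the smoothable filtration $\mathcal{F}_\bullet$. For the derivation property, I will write the deformed product on $\mathsf{A}_{q,I,\varepsilon}$ as $\star_\varepsilon=\sum_{m\ge 0}\varepsilon^m m_m$ (with $m_0$ the original product) and equate $\varepsilon^k$-coefficients in the identity $T_\varepsilon(a\star_\varepsilon b)=T_\varepsilon(a)\star_\varepsilon T_\varepsilon(b)$; because $T^{(j)}=0$ for all $1\le j<k$, the only surviving contributions on the two sides are $T^{(k)}(m_0(a,b))+m_k(a,b)$ and $m_k(a,b)+T^{(k)}(a)\,b+a\,T^{(k)}(b)$ respectively, yielding the Leibniz rule for $T^{(k)}$ on $\mathsf{A}_q$. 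For the filtration property, since $(T_\varepsilon-\mathrm{id})(\mathcal{F}_a)\subseteq\mathcal{F}_{a+1}$ for every $\varepsilon\in\bC$ and each $\mathcal{F}_b$ is a fixed $\bC$-subspace independent of $\varepsilon$, a polynomial family valued in $\mathcal{F}_{a+1}$ must have every coefficient in $\mathcal{F}_{a+1}$, so $T^{(k)}(\mathcal{F}_a)\subseteq\mathcal{F}_{a+1}$.

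Next I will decompose $T^{(k)}=\sum_{\mathbf{w}}T^{(k)}_\mathbf{w}$ under the torus action and treat each weight separately. A degree-zero weight-$\mathbf{w}$ derivation must send $x_i$ to a scalar multiple of the generator $x_j$ with $\mathbf{e}_j=\mathbf{e}_i+\mathbf{w}$, so either $\mathbf{w}=\mathbf{0}$ or $\mathbf{w}=\mathbf{e}_j-\mathbf{e}_i$ for some $i\neq j$. When $\mathbf{w}=\mathbf{0}$, $T^{(k)}_\mathbf{0}$ is diagonal, $T^{(k)}_\mathbf{0}(x_i)=t_i x_i$; choosing the maximal $a$ with $x_i\in\mathcal{F}_a$, the strict filtration increase forces $t_i=0$. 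When $\mathbf{w}=\mathbf{e}_j-\mathbf{e}_i\neq\mathbf{0}$, testing the Leibniz rule against each relation $x_sx_r-q_{sr}x_rx_s$ and reducing in $\mathsf{A}_q$ produces precisely the $q$-Hochschild-contributing conditions of \autoref{def:HHcontributing} for $\mathbf{w}$ (namely $q_{ij}=1$ and $q_{sj}=q_{si}$ for all $s\neq i$). The genericity of $q$ (\autoref{def:mgeneric}) then forces $\mathbf{w}$ to be $\lambda$-Poisson-contributing, and \autoref{p:HHtoricLogSympl} requires such a $\mathbf{w}$ to be a scalar multiple of the unique row $b_{i,\bullet}$ corresponding to $w_i=-1$---but $b_{ii}=0$ contradicts $w_i=-1$. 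Hence $T^{(k)}_\mathbf{w}=0$ for every weight $\mathbf{w}$, so $T^{(k)}=0$, giving the desired contradiction.

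The step I expect to require the most care is identifying the derivation constraints with the Hochschild-contributing conditions from \autoref{def:HHcontributing}: this direct computation is what ultimately allows the genericity machinery of \autoref{sec:ee} to rule out non-toric infinitesimal symmetries, and it is precisely here that the corank-one and biresidue structure of the setup is exploited.
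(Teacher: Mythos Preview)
Your proof is correct and follows essentially the same logic as the paper's: both reduce to the observation (made in \autoref{subsec:smoothingDiagrams}) that the only relevant contributing weight for $\mathsf{HP}^1(\mathsf{B}_\ps)$ is zero, and hence $\mathsf{A}_q$ has no nontrivial degree-zero infinitesimal symmetries of nonzero torus weight.

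The packaging is a bit different. The paper argues directly with the relations: choosing $a$ with minimal $k$, a target index $b$ with $\alpha_b\neq 0$, and a third index $c$ with $q_{ca}\neq q_{cb}$ (whose existence is exactly the failure of $\mathbf{e}_a-\mathbf{e}_b$ to be Hochschild-contributing), it then applies $T_\varepsilon$ to the single relation $r_{ac}(\varepsilon)$ and exhibits a monomial $x_bx_c$ that cannot cancel. Your version abstracts this into the statement that the leading coefficient $T^{(k)}$ is a degree-zero derivation of $\mathsf{A}_q$, decomposes it by torus weight, kills the weight-zero part with the filtration, and kills each weight $\mathbf{e}_j-\mathbf{e}_i$ by identifying the derivation constraints with the Hochschild-contributing conditions and then invoking genericity plus \autoref{p:HHtoricLogSympl}. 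Your approach makes the cohomological meaning (that $T^{(k)}$ would be a nontrivial class in $\mathsf{HH}^1(\mathsf{A}_q)^{\bC^\times}$ of nonzero weight) more transparent; the paper's is slightly more self-contained since it does not need to check that weight components of derivations remain derivations or that they individually respect the filtration.
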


\begin{proof}
If $T_\varepsilon$ is not identity, then there exists $x_a$ such that 
$$T_\varepsilon (x_a)= x_a + \varepsilon^k \sum_{d\not=a} \alpha_d x_d  + O(\varepsilon^{k+1}),$$ 
where at least one $\alpha_d$ is non-zero. Here and below the notation $O(\varepsilon^{k+1})$ denotes terms in which $\varepsilon$ appears with a power greater than or equal to $k+1$. Note in the sum that $d$ can only occur when $e_d - e_a$ is a sum of $\geq k$ smoothable weights corresponding to the subset $I$, and in particular this explains why $d \neq a$.

Without loss of generality, we assume that the index $a$ is chosen so that $k$ is the lowest possible. Fix an index $b\not=a$ such that $\alpha_{b}\not=0$.
We claim that we can choose $c\not=a,b$ such that $q_{ca}\not=q_{cb}$. Since $\prod_j q_{ij} = 1$ for all $j$, we note that if $q_{ab} \neq 1$, then already $\prod_{j \neq a,b} q_{ja} \neq \prod_{j \neq a,b} q_{jb}$, which implies the desired existence.  So assume that $q_{ab}=1$.  Then, if $q_{ca}=q_{cb}$ for all $c \neq a,b$, we have $q_{ca}=q_{cb}$ for all $c$, and then $e_a-e_b$ is a Hochschild-contributing weight of $\mathsf{A}_q$. This contradicts the observation in \autoref{subsec:smoothingDiagrams} that the only sum-zero weight of $\HP(\mathsf{B}_\ps)$ is zero.

Let $$T_\varepsilon(x_c) = x_c + \varepsilon^k \sum_{d\not=c} \beta_{d} x_d + O(\varepsilon^{k+1}).$$ Recalling that $r_{ac}(\varepsilon)=-x_cx_a+q_{ca}x_ax_c+O(\varepsilon)$, we obtain
$$
T_\varepsilon(r_{ac}(\varepsilon)) = r_{ac}(\varepsilon) + \varepsilon^k \Bigg( \sum_{d\not=a} \alpha_{d}(-x_cx_{d}+q_{ca}x_{d}x_c)+\sum_{d\not=c} \beta_d( -x_dx_a + q_{ca} x_a x_d) \Bigg)+ O(\varepsilon^{k+1}) =
$$
$$
= r_{ac}(\varepsilon) +\varepsilon^k \alpha_b(q_{ca}-q_{cb})x_bx_c +  \varepsilon^k \Bigg( \sum_{d\not=a,b} \alpha_{d}(q_{ca}-q_{cd})x_{d}x_c+\sum_{d\not=c} \beta_d (q_{ca}-q_{da}) x_a x_d \Bigg) + O(\varepsilon^{k+1}).
$$
Since $T_\varepsilon(r_{ac}(\varepsilon))\in J_{q,I}$, the $\varepsilon^k$ terms above should cancel each other out. However, the coefficient of the $x_bx_c$ term is non-zero, a contradiction.
\end{proof}

\section{Kontsevich's conjecture}\label{sec:kontsevich}

\subsection{Deformations with a formal parameter}
In order to apply our result to Kontsevich's conjecture, we need to consider algebras over the formal power series $\mathbb{C}[\![\hbar]\!]$ or convergent series $\bC\{\hbar\}$ instead of $\bC$.  It is straightforward to adapt our results to this setting.     The main result we need is the following consequence of \autoref{thm:quantumTorelliNoCycles}. 
\begin{corollary}\label{cor:convergenceDeformedAqNoCycles}
Let $\ps$ be a generic normalized alternating matrix, and consider the $\mathbb{C}[\![\hbar]\!]$-algebra $\mathsf{A}_q$, where  $q=\EE(\hbar \ps) \in \bC[\![\hbar]\!]$.  Then the following statement holds.
\begin{enumerate}
\item The deformation $\mathsf{A}_{q,I}$ from \autoref{thm:quantumTorelliNoCycles} can be defined over $\mathbb{C}\{\hbar\}$. Furthermore, if $\gamma_i=\hbar$, for each $i\in I$, then in the relations \eqref{eq:deformedRelationsNoCycles} one can arrange $C_{ba}^{k\ell}\in\hbar\mathbb{C}\{\hbar\}$.
\item For any flat deformation of $\mathsf{A}_q$ with quadratic relations \eqref{eq:deformedRelationsNoCycles} such that all $C_{ba}^{k\ell}\in \hbar \mathbb{C}[\![\hbar]\!]$, one can apply an automorphism of $\mathsf{V}[\![\hbar]\!]$ to make all $C_{ba}^{k\ell}\in \hbar \mathbb{C}\{\hbar\}$.
\end{enumerate}
\end{corollary}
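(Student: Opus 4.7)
For part (1), I would retrace the inductive construction of the star product $\star = \sum_m \mu_m$ from the proof of \autoref{thm:quantumTorelliNoCycles}, now specialized to $q = \EE(\hbar\ps)$, $\varepsilon = 1$, and $\gamma_i = \hbar$. The goal is to show by induction on $m$ that each $\mu_m$, viewed as a Hochschild $2$-cochain whose coefficients depend on $\hbar$ through $q = \EE(\hbar\ps)$, is analytic in $\hbar$ near $0$ and vanishes there to order $\geq 1$. The base case $\mu_1 = \hbar \sum_{i \in I} \mu^{(i)}$ is immediate, since each $\mu^{(i)}$ is a fixed cocycle whose coefficients are polynomial in the entries of $q$. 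For the inductive step, the crucial estimate comes from the Koszul contracting homotopy \eqref{eq:homotopyKoszulDiff}: for each weight $\mathbf{w} \in \Theta_m$ with $m \geq 2$, one chooses an index $j$ with $w_j \geq 0$ and $(\ps \cdot \mathbf{w})_j \neq 0$, which exists because genericity of $\ps$ together with \autoref{lm:smoothableWeightLinIndep} and \autoref{lm:avoidingObstructedWeights} forces $\mathbf{w}$ to be non-Poisson-contributing. For such $j$, $\alpha_j = 1 - e^{\hbar(\ps \cdot \mathbf{w})_j} = -\hbar(\ps \cdot \mathbf{w})_j + O(\hbar^2)$ has a simple zero at $\hbar = 0$, so $\alpha_j^{-1}$ is meromorphic in $\hbar$ with a simple pole. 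Since $\mathrm{Obs}_m = -\tfrac{1}{2} \sum_k [\mu_k, \mu_{m-k}]_G$ vanishes to order $\geq 2$ by the inductive hypothesis (each factor is $O(\hbar)$), the quotient $\mu_m = \mathsf{d}^{-1}(\mathrm{Obs}_m)$ is analytic and vanishes to order $\geq 1$. Reading off the quadratic relations then yields $C_{ba}^{k\ell} \in \hbar \mathbb{C}\{\hbar\}$.

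For part (2), let $\mathsf{A}^{\mathrm{an}}$ be the convergent deformation from part (1) and $\mathsf{A}'$ the given formal one; both have the same first-order term $\mu_1 = \hbar \sum_i \mu^{(i)}$. The uniqueness argument in the proof of \autoref{thm:quantumTorelliNoCycles} constructs, order by order in $\hbar$, Hochschild $1$-cochains $\nu_m \in \mathrm{Hom}(\sfA_q, \sfA_q)^{\Theta_m}$ such that $\exp\bigl(\sum_{m \geq 1} \hbar^m \nu_m\bigr)$---well defined in the $\hbar$-adic topology---is a formal gauge equivalence sending $\mathsf{A}'$ to $\mathsf{A}^{\mathrm{an}}$. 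Because each weight $\mathbf{w} \in \Theta_m$ has vanishing coordinate sum, each $\nu_m$ preserves $\sfV^* \subset \sfA_q$, so the gauge equivalence restricts to a formal automorphism of $\sfV^*[\![\hbar]\!]$, and dually of $\sfV[\![\hbar]\!]$. Under this change of variables, the relations of $\mathsf{A}'$ coincide with those of $\mathsf{A}^{\mathrm{an}}$, whose coefficients lie in $\hbar \mathbb{C}\{\hbar\}$ by part (1).

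The main obstacle will be the careful pole-order accounting in part (1). One must verify that every application of the Koszul homotopy introduces a pole in $\hbar$ of order exactly $1$---never higher---and that this is compensated precisely by the order-$2$ vanishing of the Gerstenhaber bracket from the induction hypothesis. The key ingredient is that $(\ps \cdot \mathbf{w})_j$ is nonzero for a suitable $j$ at every obstruction weight, which is exactly what genericity of $\ps$ guarantees via the avoidance lemmas recalled above; this is also what rules out any accumulation of poles that could obstruct the analytic continuation of $\mu_m$ to a neighborhood of $\hbar = 0$.
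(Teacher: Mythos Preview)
Your proposal is correct and follows essentially the same approach as the paper, which packages your Koszul-homotopy estimate on $\alpha_j^{-1}$ as a separate lemma (\autoref{lm:HHtoricPowerSeries}) rather than spelling it out inline. One minor correction to part~(2): the gauge equivalence is built by induction on the \emph{filtration} level $m$ (a finite process terminating at $m=N$), not by explicit powers $\hbar^m$---the $\nu_m$ are $1$-cochains over $\bC[\![\hbar]\!]$ in weight $\Theta_m$, and well-definedness of $\exp(\sum_m \nu_m)$ comes from finiteness of the sum, not $\hbar$-adic convergence; relatedly, to know that each $\nu_m$ actually lives over $\bC[\![\hbar]\!]$ (and not merely $\bC(\!(\hbar)\!)$) you must use that the difference $\widetilde\mu_m-\mu_m$ lies in $\hbar\bC[\![\hbar]\!]$ (coming from the hypothesis $C_{ba}^{k\ell}\in\hbar\bC[\![\hbar]\!]$), so that the homotopy's simple pole is cancelled.
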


\begin{proof}
It suffices to establish the convergence properties for the corresponding Maurer--Cartan elements $\sum_m \varepsilon^m \mu_m$ in the Hochschild complex used in the proof of \autoref{thm:quantumTorelliNoCycles}.

For (1), choose $\gamma_i=\hbar$, for each $i\in I$. Then in the proof of \autoref{thm:quantumTorelliNoCycles}, we inductively check that the Hochschild $3$-cocycle ${\rm Obs}_m$ of $\mathsf{A}_q$, viewed as $\mathbb{C}\{\hbar\}$-algebra, is divisible by $\hbar^2$. Therefore, by \autoref{lm:HHtoricPowerSeries} below, one can choose its primitive $2$-cochain $\mu_m$ to be divisible by $\hbar$.

For (2), without loss of generality, let us assume $\gamma_i=\hbar$, $i\in I$. Let $m\ge 2$ be the minimal index such that the $\varepsilon^m$-term $\mu_m$ is divergent in $\hbar$. Then ${\rm Obs}_m$ is convergent in $\hbar$ and, by part (1), it has a convergent primitive $\widetilde{\mu}_{m}$, in addition to the divergent one $\mu_{m}$, both primitives being divisible by $\hbar$. Since $\Theta_m$ does not contain Hochschild-contributing weights, by \autoref{lm:HHtoricPowerSeries} below, we can express $\widetilde{\mu}_m-\mu_m = \mathsf{d}(\eta)$ for some Hochschild $1$-cochain $\eta$ of the $\mathbb{C}[\![\hbar]\!]$-algebra $\mathsf{A}_q$.  By applying $\exp(\varepsilon^m\eta)$, we make $\widetilde{\mu}_m=\mu_m$, and continue by induction on $m$.
\end{proof}
This made use of the following vanishing result for Hochschild cohomology of $\mathsf{A}_q$:
\begin{lemma}\label{lm:HHtoricPowerSeries}
Let $\ps \in \askew{\bC}$ be an alternating matrix, let $q = \EE(\hbar\ps)\in \bC[\![\hbar]\!]$ and let $\sfA_q$ be the associated $q$-symmetric algebra over $\bC[\![\hbar]\!]$.  If $\mathbf{w} \in \bZ^n$ is not $q$-Hochschild-contributing, then the weight component $\mathsf{HH}(\mathsf{A}_q)^\mathbf{w}$ is annihilated by multiplication by $\hbar$.  The same is true when replacing $\bC[\![\hbar]\!]$ with $\bC\{\hbar\}$.
\end{lemma}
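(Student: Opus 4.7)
The plan is to adapt the contracting homotopy from the proof of \autoref{lm:HHtoric}, rescaling it by a factor of $\hbar$ so that it remains defined without inverting $\hbar$. As in that proof, I would compute $\HH(\sfA_q)^{\mathbf{w}}$ using the weight-$\mathbf{w}$ part of the transported Koszul complex $(\mathsf{S}(\sfV^*)\otimes \bigwedge(\sfV))^{\mathbf{w}}$, whose differential is $\alpha\wedge$ with $\alpha_j = 1 - \prod_k q_{kj}^{w_k}$. If $w_i<-1$ for some $i$, this weight space already vanishes and there is nothing to show. Otherwise, the failure of $\mathbf{w}$ to be $q$-Hochschild-contributing supplies an index $j$ with $w_j \geq 0$ and $\alpha_j \neq 0$.

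The key observation is that, in the power series setting, writing $q_{kj} = \exp(\hbar \ps_{kj})$ gives
\[
\alpha_j \;=\; 1 - \exp\!\Bigl(\hbar \sum_k \ps_{kj} w_k\Bigr) \;=\; \hbar\,\beta_j,
\]
where $\beta_j \in \bC[\![\hbar]\!]$ has constant term $-\sum_k \ps_{kj} w_k$. This constant term is nonzero precisely because $\alpha_j \neq 0$, so $\beta_j$ is a unit in $\bC[\![\hbar]\!]$ (and equally in $\bC\{\hbar\}$). The homotopy $\frac{1}{\alpha_j x_j}\iota_{d_j}$ from \autoref{lm:HHtoric} is only defined after inverting $\hbar$, but its $\hbar$-multiple
\[
\tilde h \;:=\; \frac{1}{\beta_j x_j}\,\iota_{d_j}
\]
makes sense over $\bC[\![\hbar]\!]$: division by $\beta_j$ is legal because $\beta_j$ is invertible, and division by $x_j$ is only ever applied after $\iota_{d_j}$ has produced a monomial that contains $x_j$ (which it must, since $w_j\ge 0$).

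Finally, the same bookkeeping as in \autoref{lm:HHtoric} yields $[\mathsf{d}_q,\tilde h] = \hbar\cdot \mathrm{id}$ on the weight-$\mathbf{w}$ subcomplex, so for any cocycle $z$ we have $\hbar z = \mathsf{d}_q(\tilde h z)$, showing that multiplication by $\hbar$ annihilates $\HH(\sfA_q)^{\mathbf{w}}$. The $\bC\{\hbar\}$ case is identical, as $\beta_j$ remains a unit there. I do not anticipate any substantial obstacle: the argument is essentially a line-by-line $\hbar$-rescaling of the homotopy already used in \autoref{lm:HHtoric}, and the only new input is the factorization $\alpha_j = \hbar\beta_j$ with $\beta_j$ a unit in the appropriate ring of (formal or convergent) power series.
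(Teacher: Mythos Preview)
Your proposal is correct and follows essentially the same approach as the paper: both arguments factor $\alpha_j = \hbar\beta_j$ with $\beta_j$ a unit (the paper writes $\alpha_j = \hbar f$ with $f \in \bC[\![\hbar]\!]^\times$), then use the homotopy $\frac{1}{\alpha_j x_j}\iota_{d_j}$ from \autoref{lm:HHtoric} rescaled by $\hbar$ to show that any cocycle multiplied by $\hbar$ acquires a primitive defined over $\bC[\![\hbar]\!]$ (respectively $\bC\{\hbar\}$). Your formulation via the identity $[\mathsf{d}_q,\tilde h] = \hbar\cdot\mathrm{id}$ is a clean repackaging of the same computation.
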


\begin{proof}
We repeat the proof of \autoref{lm:HHtoric}, viewing $\mathsf{A}_q$ as $\bC((\hbar))$-algebra.
For a non-Hochschild-contributing weight $\mathbf{w}$, there is at least one index $j$ with $w_j\ge 0$ such that the coefficient $\alpha_j$ in \eqref{eq:KoszulDiffAq} is of the form $\hbar f$, $f\in \bC[\![\hbar]\!]^\times$.  Then the homotopy operator \eqref{eq:homotopyKoszulDiff} applied to a cocycle in $ \Big(\mathsf{S}^\bullet(\mathsf{V}^*) \otimes \wedge^\bullet(\mathsf{V}) \otimes \hbar\bC[\![\hbar]\!]\Big)^\mathbf{w}$ gives its primitive in $ \Big(\mathsf{S}^\bullet(\mathsf{V}^*) \otimes \wedge^\bullet(\mathsf{V}) \otimes \bC[\![\hbar]\!]\Big)^\mathbf{w}$, where $\mathsf{V}\cong \bC^n$.  The same proof works for $\bC\{\hbar\}$.
\end{proof}

\subsection{The star product}
Let $\mathcal{O}_\mathsf{V}=\mathbb{C}[x_0,...,x_{n-1}]$. Recall that a \emph{star product} on $\mathcal{O}_\mathsf{V}$ is an associative product 
\begin{equation}\label{eq:starProduct}
\star: \mathcal{O}_\mathsf{V}[\![\hbar]\!] \times \mathcal{O}_\mathsf{V}[\![\hbar]\!] \longrightarrow \mathcal{O}_\mathsf{V}[\![\hbar]\!]
\end{equation}
such that $f \star g = fg + O(\hbar)$ for all $f,g\in \mathcal{O}_\mathsf{V}$.
Two star products $\star_1$ and $\star_2$ are called \emph{gauge-equivalent} if 
the two $\mathbb{C}[\![\hbar]\!]$-algebras $(\mathcal{O}_\mathsf{V}[\![\hbar]\!],\star_1)$ and $(\mathcal{O}_\mathsf{V}[\![\hbar]\!],\star_2)$ are isomorphic by an isomorphism that reduces to the identity on $\mathcal{O}_\mathsf{V} = \mathcal{O}_\mathsf{V}[\![\hbar]\!]/(\hbar)$.  A star product $\star$ is called \emph{convergent} if $f \star g \in \mathcal{O}_\mathsf{V}\{\hbar\}$ for all $f,g \in \mathcal{O}_\mathsf{V}\{\hbar\}$.

In \cite{Kontsevich2003}, Kontsevich proved that every Poisson structure on $\mathcal{O}_\mathsf{V}$  admits a canonical quantization, given by a star product whose commutator is equal to the Poisson bracket modulo $\hbar^2$.  When the Poisson bracket is quadratic, its canonical quantization is a quadratic $\bC[\![\hbar]\!]$-algebra, and Kontsevich has conjectured that the quadratic relations converge  \cite[Conjecture 1]{Kontsevich2001}.  We consider the following slightly weaker version where we allow for convergence ``up to isomorphism''.
\begin{conjecture}\label{conj:convergenceKontsevich}
Let $\sigma$ be a quadratic Poisson bivector on $\mathsf{V}=\mathbb{C}^n$. Then its canonical quantization is gauge-equivalent to a convergent star product.
\end{conjecture}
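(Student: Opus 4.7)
The plan is to try to extend the methods of this paper beyond the toric-degeneration class; I will outline an ambitious strategy and then identify where the argument would most likely break down. The overarching idea is a reduction-plus-gluing scheme: first show that the locus of quadratic Poisson bivectors $\sigma$ on $\mathsf{V}$ whose canonical quantization is gauge-equivalent to a convergent star product enjoys good closure properties, and then exhibit a ``spanning'' family of bivectors inside this locus.

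First I would establish that the convergence property in \autoref{conj:convergenceKontsevich} is preserved by several natural operations: (a) the $\mathsf{GL}(\mathsf{V})$-action, since gauge equivalence is $\mathsf{GL}$-invariant; (b) braided tensor products of Poisson algebras of the type considered in \autoref{thm:DeformCyclesAndChains}, using that Kontsevich's star product on a product is the braided product of the factor star products up to gauge; and (c) one-parameter degenerations $\sigma_t$ along curves in the affine variety of quadratic Poisson tensors, using a semicontinuity argument for the radius of convergence of the Kontsevich series viewed as a formal power series whose coefficients are polynomial in $\sigma$. Taken together, these reduce the conjecture to exhibiting, in every irreducible component of the moduli of quadratic Poisson structures on $\mathbb{C}^n$, a representative whose canonical quantization is convergent up to gauge.

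Next I would invoke the main results of the present paper to handle, for each $n$, the irreducible components whose generic point is a log-symplectic bracket admitting a filtered toric degeneration to a generic corank-one $\ps$: \autoref{thm:Main} and \autoref{cor:convergenceDeformedAqNoCycles} furnish a convergent analytic model, while \autoref{subsec:FOcycle} handles the Feigin-Odesskii cyclic pieces via theta-function formulae, and the braided tensor product decomposition in \autoref{thm:DeformCyclesAndChains} assembles these into a convergent model for each mixed chain/cycle pattern. The Kontsevich identification of \autoref{thm:kontsevich} (applied in the extended setting) would then be used to match these analytic models with the canonical quantizations up to gauge. To cover the remaining components, I would look for ``atomic'' families of Poisson structures supported by generalized smoothing diagrams - allowing, for instance, higher-valence vertices or non-toric residual data - and for each atomic family prove convergence by exhibiting an explicit analytic associative deformation $\widetilde{\sfA}$ of $\sfA_q$ whose relations reproduce the Kontsevich bracket modulo $\hbar^2$, then invoking the rigidity of the canonical quantization (the fact that a formal deformation with the correct semiclassical limit is uniquely determined, up to gauge, by its Hochschild cohomology class) to conclude that Kontsevich's $\star$ is gauge-equivalent to $\widetilde{\sfA}$.

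The main obstacle, and I expect it to be serious, is that no classification of the irreducible components of the moduli of quadratic Poisson structures is available for $n \geq 5$, and there is in fact no reason to believe that every component admits a toric (or even an ``almost toric'') degeneration. For this reason the strategy above is, at best, a conditional roadmap: unconditionally, it settles the conjecture only for those components reachable by the diagrammatic calculus of \cite{Matviichuk2023b}. To resolve the conjecture in full one of two substantially new ingredients seems necessary: either a general degeneration theorem asserting that every quadratic Poisson structure on $\mathbb{C}^n$ admits a flat deformation to one whose canonical quantization is already known to be convergent, or a genuinely diagram-free analytic estimate on Kontsevich's weights - for example, a proof that for quadratic bivectors the contribution of order-$m$ graphs is bounded by $C^m$ for some universal $C = C(\sigma)$, presumably via the arithmetic of multiple zeta values arising in Kontsevich's integrals. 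I would expect the second route to be the more conceptual, and the most likely to yield the conjecture in full generality.
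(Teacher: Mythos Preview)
The most basic point is that \autoref{conj:convergenceKontsevich} is stated in the paper as a \emph{conjecture}, not a theorem; the paper does not prove it in general, and explicitly presents it as a slight weakening of Kontsevich's open conjecture from \cite{Kontsevich2001}. What the paper does establish is the special case recorded in \autoref{thm:kontsevich}: the conjecture holds for Poisson structures of the form $\mathsf{B}_{\ps,I}$, i.e.~those admitting a filtered cycle-free toric degeneration to a generic corank-one $\ps$. You correctly identify this, and you are candid that your outline is ``at best a conditional roadmap''. So there is no discrepancy with the paper to adjudicate: the paper does not claim a proof, and neither do you.

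That said, several of the reduction steps in your outline are more fragile than you indicate, and would fail as stated. The most serious is step (c), the semicontinuity of the radius of convergence along a one-parameter family $\sigma_t$. The coefficients of Kontsevich's star product are indeed polynomial in the entries of $\sigma$, but the radius of convergence of a power series in $\hbar$ whose coefficients vary polynomially in a parameter $t$ is \emph{not} lower-semicontinuous in $t$: already $\sum_{m\ge 0} t\, m!\, \hbar^m$ has infinite radius at $t=0$ and zero radius for every $t\neq 0$. So knowing convergence at a degeneration tells you nothing about the generic point, which is precisely the direction you need. Step (b) is also not available off the shelf: Kontsevich's formality morphism is not known to be compatible with the $q$-braided tensor product used in \autoref{thm:DeformCyclesAndChains}, and the paper does not assert this. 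Finally, the ``rigidity of the canonical quantization'' you invoke requires vanishing of the relevant $\HH^2$ weight spaces, which the paper obtains only under the genericity hypothesis of \autoref{def:ageneric}; for an arbitrary quadratic $\sigma$ no such control is available. Your closing assessment is therefore accurate: a genuine proof of \autoref{conj:convergenceKontsevich} would require an input of a different nature, such as direct analytic bounds on the Kontsevich weights, and is beyond the scope of the techniques developed here.
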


Recently the second named author and Lindberg proved Kontsevich's conjecture
for the toric Poisson algebras $\mathsf{B}_\ps$, where $\ps$ is any alternating matrix. Specifically, building upon Hodge-theoretic ideas proposed by Kontsevich \cite{Kontsevich2008}, they proved  
\begin{theorem}[\cite{Lindberg2021,Lindberg2024}]\label{thm:toricKontsevich}
  The generators $x_i$ of the canonical quantization of $\mathsf{B}_\ps$ satisfy the relations of $\mathsf{A}_q$. In particular, the former is isomorphic to the $\hbar$-adic completion of $\mathsf{A}_q$ for $q=\EE(\hbar\ps)$.
\end{theorem}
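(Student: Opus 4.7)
The plan is to compute Kontsevich's canonical star product $\star$ on $\sfB_\ps$ applied directly to the generators $x_a,x_b$, and to show that $x_a\star x_b = q_{ab}\,x_b\star x_a$ for $q_{ab}=e^{\hbar\ps_{ab}}$. Since Kontsevich's formula is built from universal polydifferential operators in the Poisson bivector and its derivatives, it is equivariant under any Poisson automorphism. The torus $(\bCx)^n$ acts on $\sfB_\ps$ by Poisson automorphisms, so $\star$ preserves the torus grading. As the weight $\mathrm{e}_a+\mathrm{e}_b$ subspace of $\sfB_\ps$ is one-dimensional and spanned by $x_ax_b$, this yields $x_a\star x_b = f_{ab}(\hbar)\,x_ax_b$ and $x_b\star x_a = f_{ba}(\hbar)\,x_ax_b$ for unique $f_{ab},f_{ba}\in\bC[[\hbar]]$ with value $1$ at $\hbar=0$. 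It therefore suffices to compute the ratio $f_{ab}/f_{ba}$.

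To identify this ratio, I would pass to logarithmic coordinates $y_i=\log x_i$ on the open torus $(\bCx)^n$, in which the Poisson bracket becomes constant: $\{y_i,y_j\}=\ps_{ij}$. For a constant Poisson bivector, every Kontsevich graph with an internal vertex of valence $\ge 2$ inserts a derivative of the (constant) bivector and therefore contributes zero, so the star product collapses to the Moyal product. On linear functions, only the zero- and one-edge graphs survive, yielding $y_a\star y_b = y_ay_b + \tfrac{\hbar}{2}\ps_{ab}$; in particular $[y_a,y_b]_\star = \hbar\ps_{ab}$ is a central constant, so all higher $\star$-commutators vanish and the Baker--Campbell--Hausdorff identity gives
\begin{equation*}
e^{y_a}\star e^{y_b} = e^{y_a+y_b+\tfrac{\hbar}{2}\ps_{ab}},\qquad e^{y_b}\star e^{y_a} = e^{y_a+y_b-\tfrac{\hbar}{2}\ps_{ab}},
\end{equation*}
whose ratio is exactly $q_{ab}$. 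This establishes the defining relations of $\sfA_q$ inside the canonical quantization; a Hilbert-series/flatness comparison (both algebras are flat $\bC[[\hbar]]$-deformations of $\bC[x_0,\ldots,x_{n-1}]$) then upgrades the induced surjection from the $\hbar$-adic completion of $\sfA_q$ onto the canonical quantization to an isomorphism.

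The hard part is that $y_i=\log x_i$ does not lie in the polynomial ring $\sfB_\ps$, so one must justify that Kontsevich's star product---which is \emph{a priori} constructed to act on polynomials---can be meaningfully transported under the analytic diffeomorphism $\exp\colon \bC^n \dashrightarrow (\bCx)^n$, and that the Moyal identity for exponentials in the $y$-coordinates reproduces, after series expansion, the values of Kontsevich's universal bidifferential operators on the polynomial inputs $x_a,x_b$. Equivalently, one must verify that the infinite sums of graph weights obtained by expanding the exponentials in $\hbar$ match the Taylor coefficients predicted by Kontsevich's original formula. This is precisely the technical content of \cite{Kontsevich2008,Lindberg2021,Lindberg2024}: the graph weights are recast via Hodge theory on a suitable configuration space, reducing the required matching to a computation of period integrals whose closed form reproduces $e^{\hbar\ps_{ab}}$.
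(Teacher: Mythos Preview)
This theorem is not proved in the paper; it is quoted from \cite{Lindberg2021,Lindberg2024} (building on ideas of \cite{Kontsevich2008}) as external input, so there is no in-paper argument to compare against.

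That said, your proposal has a genuine gap worth naming. The first reduction is fine: torus-equivariance of Kontsevich's construction forces $x_a\star x_b = f_{ab}(\hbar)\,x_ax_b$, so the whole problem is to identify the scalar $f_{ab}/f_{ba}$. The second step, however, is not valid as stated. Kontsevich's formality morphism is equivariant only under \emph{affine} coordinate changes, not arbitrary diffeomorphisms; under the non-affine substitution $y_i=\log x_i$ the canonical star product does \emph{not} become the Moyal product. Your BCH computation therefore produces the answer for a \emph{different} quantization---the pullback of Moyal through $\exp$---and there is no a priori reason for it to agree with Kontsevich's on the nose. (Gauge equivalence of star products quantizing the same bracket would only give agreement up to an automorphism of $\mathcal{O}_{\mathsf{V}}[\![\hbar]\!]$, which is strictly weaker than the assertion that the generators $x_i$ themselves satisfy the $\mathsf{A}_q$ relations.)

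You do flag this as ``the hard part'', but your description of the resolution---transporting the star product through the analytic diffeomorphism and matching Taylor coefficients---is not what the cited papers do. They do not justify a coordinate change; rather, they compute the relevant Kontsevich weight integrals directly in the original $x$-coordinates, exploiting their identification with periods on configuration spaces of marked disks and the resulting Hodge-theoretic constraints to show that the full sum of graph contributions to $x_a\star x_b$ exponentiates to $e^{\hbar\ps_{ab}}$ times a symmetric factor. The log-coordinate heuristic is a good way to guess the answer, but it is not the mechanism of the proof.
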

It is expected that similar techniques can be applied to the Feigin--Odesskii algebra.

The goal of this section is to leverage \autoref{thm:toricKontsevich} to prove the convergence of the canonical quantization for Poisson structures obtained from smoothing diagrams without cycles.   We thus fix a corank-one normalized alternating matrix $\ps$, with associated toric Poisson algebra $\sfB_\ps$, and let $i\edge (i+1)$, $i\in I$ be a cycle-free subset of smoothable edges.

We have the following Poisson analogue of \autoref{thm:quantumTorelliNoCycles}.

\begin{theorem}\label{thm:PoissonTorelliNoCycles}
For every choice of non-zero constants $\gamma_i$, $i\in I$, the algebra $\mathsf{B}_\ps$ admits a filtered deformation to a Poisson algebra $\mathsf{B}_{\ps,I}$, whose Poisson bracket is defined by
\begin{equation}\label{eq:deformedBracketNoCycles}
\{x_a,x_b\} = \ps_{ab} ~x_a x_b + \sum_{m=1}^N \varepsilon^m \left(\sum_{\substack{-\mathrm{e}_a-\mathrm{e}_b+\mathrm{e}_k+\mathrm{e}_\ell \in \Theta_m\\k\le \ell}} c_{ba}^{k\ell} ~x_k x_\ell\right),~~~a<b,
\end{equation}
where $c_{ba}^{k\ell}=\gamma_i$ whenever $-\mathrm{e}_a-\mathrm{e}_b+\mathrm{e}_k+\mathrm{e}_\ell =\boldsymbol{\theta}_i$.
The algebra $\mathsf{B}_{\ps,I}$ is uniquely defined, up to a filtered automorphism of $\mathsf{V}$, and independent of $\varepsilon$ or the choice of non-zero constants $\gamma_i$.
\end{theorem}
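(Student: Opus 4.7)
The plan is to imitate the proof of \autoref{thm:quantumTorelliNoCycles} in the classical Poisson setting. I would work in the Schouten DGLA of polynomial polyvector fields on $\sfV$, with Lichnerowicz differential $\mathsf{d}_\ps = [\sigma_\ps,-]$, and construct a Maurer--Cartan element
\[
\pi_\varepsilon := \sigma_\ps + \sum_{m \ge 1} \varepsilon^m \pi_m
\]
inductively on $m$, requiring each $\pi_m$ to be a bivector whose nonzero torus weights lie in $\Theta_m$. Since all such bivectors have bidegree $(2,2)$ in $(x,\partial)$, a grading preserved by the Schouten bracket, the resulting Poisson structure will automatically be quadratic on the coordinates, producing a bracket of the shape \eqref{eq:deformedBracketNoCycles}.

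For the base case, \autoref{lm:HPtoric} identifies each $\HP^2(\sfB_\ps)^{\boldsymbol{\theta}_i}$ as one-dimensional, spanned by $\pi^{(i)} = x_{k_i} x_{\ell_i}\,\partial_{a_i} \wedge \partial_{b_i}$, where $\boldsymbol{\theta}_i = -{\rm e}_{a_i} - {\rm e}_{b_i} + {\rm e}_{k_i} + {\rm e}_{\ell_i}$; I would set $\pi_1 := \sum_{i\in I}\gamma_i \pi^{(i)}$. At the inductive step, the obstruction
\[
\mathrm{Obs}_m := -\tfrac{1}{2}\sum_{k=1}^{m-1}[\pi_k,\pi_{m-k}]_{\mathrm{Sch}}
\]
is a $\mathsf{d}_\ps$-closed trivector whose weights lie in $\Theta_m$. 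The crucial observation is that \autoref{lm:smoothableWeightLinIndep} and \autoref{lm:avoidingObstructedWeights} are statements purely about the biresidue matrix $b$, and hence apply verbatim on the Poisson side: no weight in $\Theta_m$ is obstructed, so by \autoref{lm:HPtoric} we obtain $\HP^3(\sfB_\ps)^{\Theta_m} = 0$ and may choose a primitive $\pi_m$ of the prescribed weights with $\mathsf{d}_\ps \pi_m = \mathrm{Obs}_m$. The procedure terminates thanks to the smoothable filtration \eqref{eq:smoothableFiltration}, since each $\pi_m$ contributes only in filtration degree $\ge m$, while \autoref{lm:smoothableWeightLinIndep} ensures $\bigcap_a \mathcal{F}_a = 0$.

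For uniqueness up to a filtered automorphism of $\sfV$, the key fact will be $\HP^2(\sfB_\ps)^{\Theta_m} = 0$ for $m \ge 2$. Indeed, any weight in $\Theta_m$ with $m\ge 2$ is a nontrivial non-negative integer combination of at least two linearly independent smoothable weights (by \autoref{lm:smoothableWeightLinIndep}), so it cannot itself be smoothable, and as listed in \autoref{subsec:smoothingDiagrams} the only other weight contributing to $\HP^2$ is the zero weight, which is absent from $\Theta_m$ for $m\ge 1$. Two valid choices of $\pi_m$ therefore differ by $\mathsf{d}_\ps \nu_m$ for a vector field $\nu_m$ of weight in $\Theta_m$, and the formal flow $\exp(\varepsilon^m \nu_m)$ intertwines them modulo $\varepsilon^{m+1}$, restricting to the desired filtered automorphism of $\sfV^*$. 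Independence from $\varepsilon$ and the $\gamma_i$ follows, exactly as in the quantum case, by absorbing them into a diagonal torus transformation of the generators.

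I do not anticipate any serious obstacle: the only genuinely new input relative to the quantum proof is the transfer of the obstruction-vanishings from Hochschild to Poisson cohomology, but under the genericity hypothesis on $\ps$ the two weight analyses coincide via \autoref{p:HHtoricLogSympl}, so the combinatorics developed in \autoref{subsec:smoothingDiagrams} applies unchanged.
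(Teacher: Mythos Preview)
Your proposal is correct and follows exactly the approach the paper indicates: repeat the proof of \autoref{thm:quantumTorelliNoCycles} with the Schouten bracket and Poisson cohomology in place of the Gerstenhaber bracket and Hochschild cohomology, using \autoref{lm:smoothableWeightLinIndep} and \autoref{lm:avoidingObstructedWeights} unchanged. The paper additionally remarks that the existence claim can be deduced directly from \autoref{thm:quantumTorelliNoCycles} by taking the semiclassical limit.
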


\begin{proof}
This is essentially a special case of the main results of \cite{Matviichuk2020}, for which the full machinery of \emph{op.\ cit.} is not needed. Indeed, a direct proof can be obtained by repeating the proof of \autoref{thm:quantumTorelliNoCycles}, replacing the Hochschild cohomology with Poisson cohomology, and Gerstenhaber bracket with Schouten bracket. The existence claim also follows directly from \autoref{thm:quantumTorelliNoCycles} by taking semiclassical limit.
\end{proof}

Now, we are ready to state and prove the main result of this section.

\begin{theorem}\label{thm:kontsevich}
The following statements hold
\begin{enumerate}
\item The canonical quantization of $\mathsf{B}_{\ps,I}$ is isomorphic to the $\mathbb{C}[\![\hbar]\!]$-algebra $\mathsf{A}_{q,I}$ constructed in \autoref{cor:convergenceDeformedAqNoCycles}, where $q=\EE(\hbar \ps)$.
\item \autoref{conj:convergenceKontsevich} is true for $\mathsf{B}_{\ps,I}$.
\end{enumerate}
\end{theorem}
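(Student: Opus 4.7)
The plan is to realize the canonical quantization of $\mathsf{B}_{\ps,I}$ as a two-parameter deformation of $\mathsf{A}_q$ (with $q=\EE(\hbar\ps)$), and then identify it with $\mathsf{A}_{q,I}$ using the uniqueness statement underlying \autoref{thm:quantumTorelliNoCycles}. First I would apply Kontsevich's canonical quantization fiberwise to the $\varepsilon$-parameterized family $\mathsf{B}_{\ps,I,\varepsilon}$ of \autoref{thm:PoissonTorelliNoCycles}, obtaining a polynomial-in-$\varepsilon$ family of star products $\star_\varepsilon$ on $\mathcal{O}_\mathsf{V}[\![\hbar]\!]$. Because each Poisson bracket $\{-,-\}_\varepsilon$ is quadratic, the Euler vector field is a derivation of each $\star_\varepsilon$---a standard consequence of the scaling equivariance of Kontsevich's formality---so $\mathsf{A}_\varepsilon^K := (\mathcal{O}_\mathsf{V}[\![\hbar]\!],\star_\varepsilon)$ is a quadratic $\mathbb{C}[\![\hbar]\!]$-algebra. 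At $\varepsilon=0$, \autoref{thm:toricKontsevich} identifies $\mathsf{A}_0^K$ with the $\hbar$-adic completion of $\mathsf{A}_q$, so $\mathsf{A}_\varepsilon^K$ is a flat $\mathbb{C}[\![\hbar]\!][\![\varepsilon]\!]$-deformation of $\mathsf{A}_q$.

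Next I would match the first-order-in-$\varepsilon$ term of this deformation with the smoothing-edge cocycle used to build $\mathsf{A}_{q,I}$. The identity $f\star_\varepsilon g - g\star_\varepsilon f = \hbar\{f,g\}_\varepsilon + O(\hbar^2)$, differentiated at $\varepsilon=0$, shows that the first-order term is a Hochschild $2$-cocycle of $\mathsf{A}_q$ (over $\mathbb{C}[\![\hbar]\!]$) whose antisymmetrization reduces modulo $\hbar^2$ to $\hbar\sum_{i\in I}\gamma_i\sigma^{(i)}$, where $\sigma^{(i)}$ is the smoothing-edge Poisson bivector. Since each smoothable weight is Hochschild-contributing by genericity, \autoref{lm:HHtoric} yields that the corresponding weight piece of $\mathsf{HH}^2(\mathsf{A}_q)$ is a free $\mathbb{C}[\![\hbar]\!]$-module of rank $|I|$ generated by the cocycles $\mu^{(i)}$ from the proof of \autoref{thm:quantumTorelliNoCycles}; hence mod-$\hbar$ comparison shows that the first-order Kontsevich cocycle agrees, up to a coboundary, with $\sum_{i\in I}\hbar\gamma_i\mu^{(i)}$ (after absorbing an invertible scalar into each $\gamma_i$).

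With the first-order term in place, I would then rerun the obstruction-theoretic induction from the proof of \autoref{thm:quantumTorelliNoCycles} in the $\mathbb{C}[\![\hbar]\!]$-linear setting: \autoref{lm:avoidingObstructedWeights} kills all obstructions in $\mathsf{HH}^3(\mathsf{A}_q)^{\Theta_m}$, while the vanishing $\mathsf{HH}^2(\mathsf{A}_q)^{\Theta_m}=0$ for $m\ge 2$ (a consequence of \autoref{lm:smoothableWeightLinIndep}) shows that the higher-order terms $\mu_m^K$ are determined up to coboundary. This produces a gauge equivalence of $\mathbb{C}[\![\hbar]\!][\![\varepsilon]\!]$-algebras between $\mathsf{A}_\varepsilon^K$ and $\mathsf{A}_{q,I,\varepsilon}$; specializing $\varepsilon=1$ yields part (1). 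For part (2), one combines part (1) with part (1) of \autoref{cor:convergenceDeformedAqNoCycles} to produce a presentation of $\mathsf{A}_{q,I}$ whose defining relations have coefficients in $\hbar\mathbb{C}\{\hbar\}$; the corresponding star product then converges on $\mathcal{O}_\mathsf{V}\{\hbar\}$ and is gauge-equivalent to the canonical quantization by part (1).

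The main obstacle is running the uniqueness argument cleanly in the two-parameter expansion while maintaining control over $\hbar$-convergence at every inductive step in $\varepsilon$. This is precisely where part (2) of \autoref{cor:convergenceDeformedAqNoCycles} is essential: it guarantees that the Hochschild $1$-cochains implementing successive gauge equivalences can be chosen with coefficients in $\mathbb{C}\{\hbar\}$ rather than merely in $\mathbb{C}[\![\hbar]\!]$, so the identification with $\mathsf{A}_{q,I}$ respects analyticity in $\hbar$.
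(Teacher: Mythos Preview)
Your approach is correct and uses the same essential ingredients as the paper---torus equivariance of Kontsevich's formality, \autoref{thm:toricKontsevich} for the toric part, and the uniqueness/obstruction theory underlying \autoref{thm:quantumTorelliNoCycles} and \autoref{cor:convergenceDeformedAqNoCycles}---but it is more elaborate than necessary. The paper bypasses the auxiliary $\varepsilon$-parameter entirely: since Kontsevich's quantization is $(\mathbb{C}^\times)^n$-equivariant and the Poisson bivector of $\mathsf{B}_{\ps,I}$ has only torus weights lying in $\bigcup_m \Theta_m$, the quadratic relations of the canonical quantization automatically have the form \eqref{eq:deformedRelationsNoCycles}, with the weight-zero part identified by \autoref{thm:toricKontsevich}. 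One is then immediately in the hypotheses of part (2) of \autoref{cor:convergenceDeformedAqNoCycles}, which simultaneously yields the isomorphism with $\mathsf{A}_{q,I}$ and the convergence. Your two-parameter expansion repackages the same weight filtration as an $\varepsilon$-grading, at the cost of having to match the first-order term explicitly, rerun the inductive argument, and justify the specialization $\varepsilon=1$; the paper's route avoids all of this bookkeeping. One small point you gloss over that the paper makes explicit: to pass from a convergent presentation of $\mathsf{A}_{q,I}$ to a convergent star product one must fix a $\mathbb{C}\{\hbar\}$-linear identification of $\mathsf{A}_{q,I}$ with $\mathcal{O}_\mathsf{V}\{\hbar\}$, which the paper does via $q$-symmetrization of monomials.
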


\begin{proof}
  Since the canonical quantization procedure is $(\mathbb{C}^\times)^n$-equivariant, the weights of the terms in the canonical star product of $\mathsf{B}_{\ps,I}$ have to be the same as the weights appearing in \eqref{eq:deformedBracketNoCycles}. Using \autoref{thm:toricKontsevich}, we deduce that the star product satisfies relations of the form \eqref{eq:deformedRelationsNoCycles} with $q = \EE(\hbar\ps)$. Moreover, by \autoref{cor:convergenceDeformedAqNoCycles}, all $C_{ab}^{k\ell}$ appearing in the relations \eqref{eq:deformedRelationsNoCycles} belong to $\hbar\mathbb{C}[\![\hbar]\!]$, and can be defined over convergent power series.  Up to gauge equivalence, we can furthermore assume that monomials are given by $q$-symmetrization:
  \[
    x_{i_1} \cdots x_{i_m} = \frac{\sum_{\sigma \in S_m} q^{c_\sigma} x_{i_{\sigma(1)}} \star \cdots \star x_{i_{\sigma(m)}} }{\sum_{\sigma \in S_m} q^{c_\sigma}},
  \]
  where $c_{\sigma}$ is  such that, in $\mathsf{A}_q$, we have $q^{c_\sigma} x_{i_{\sigma(1)}} \cdots x_{i_{\sigma(m)}} = x_{i_1} \cdots x_{i_m}$.  With this choice (or actually, any other choice of gauge which identifies monomials with $\bC\{\hbar\}$-linear combinations of star products of the $x_i$), it follows that all of the star products converge.  
\end{proof}

\bibliographystyle{hyperamsplain}
\bibliography{DeformOfToricQuant}
\end{document}